\documentclass[11pt]{article}

\usepackage{amssymb,latexsym,verbatim}
\usepackage{graphicx,epsfig,epstopdf,amssymb,color}
\usepackage{amsthm,amsmath}

\bibliographystyle{unsrt}

\setlength{\topmargin}{-.50 in}
\setlength{\oddsidemargin}{-0.5in}
\setlength{\evensidemargin}{-0.7in}
\setlength{\textheight}{9.2in}
\setlength{\textwidth}{7.3in}
\setlength{\marginparwidth}{0.0in}
\setlength{\marginparsep}{0.0in}
\setlength{\marginparpush}{0.0in}
\setlength{\footskip}{0.5in}

\newcommand{\eps}{\varepsilon}
\newcommand{\be}{\begin{equation}}
\newcommand{\ee}{\end{equation}}
\newcommand{\bea}{\begin{eqnarray}}
\newcommand{\eea}{\end{eqnarray}}

\newcommand{\ba}{\begin{array}}
\newcommand{\ea}{\end{array}}
\newcommand{\R}{\mathbb{R}}
\newcommand{\Z}{\mathbb{Z}}

\newcommand{\sinc}{\mathrm{sinc}}

\newtheorem{thm}{Theorem}[section]
\newtheorem{lemma}[thm]{Lemma}
\newtheorem{prop}[thm]{Proposition}

\newtheorem*{rmk}{Remark}

\begin{document}
\baselineskip=14pt
\title{Counterpropagating two-soliton solutions in the FPU lattice}
\author{A. Hoffman and C.E.Wayne \\ 
Boston University \\ 
Department of Mathematics and Statistics and Center for BioDynamics \\
111 Cummington St. Boston, MA 02135}

\maketitle
\abstract{
We study the interaction of small amplitude, long wavelength solitary waves in the Fermi-Pasta-Ulam model with general nearest-neighbor interaction potential.  We establish global-in-time existence and stability of counter-propagating solitary wave solutions.  These solutions are close to the linear superposition of two solitary waves for large positive and negative values of time; for intemediate values of time these solutions describe the interaction of two counterpropagating pulses.  These solutions are stable with respect to perturbations in $\ell^2$ and asymptotically stable with respect to perturbations which decay exponentially at spatial $\pm \infty$.}

\section{Introduction}
It has long been of interest in applied math and physics to study how energy and mass are scattered or transfered during the collision of two or more coherent objects.  We study this question for the Fermi-Pasta-Ulam (FPU) problem:
\begin{equation}
\ddot{q}_j = V'(q_{j+1} - q_j) - V'(q_j - q_{j-1}) \qquad j \in \Z
\label{eq:FPU_q}
\end{equation}
which models an infinite chain of anharmonic oscillators with nearest-neighbor interaction potential $V$.  
On making the change of variables $r_j = q_{j+1} - q_j$ and $p_j = \dot{q}_j$, the state variable $u = (r,p)$ satisfies a system of first order Hamiltonian ODEs, 
\begin{equation}
u_t = JH'(u)
\label{FPU}
\end{equation}
where the Hamiltonian $H$ is given by 
\begin{equation}
H(r,p) = \sum_{k \in \Z} \frac{1}{2} p_k^2 + V(r_k)
\label{eq:Hamiltonian}
\end{equation}
The symplectic operator $J$ is given by $J = \left( \ba{cc} 0 & S-1 \\ 1 - S^{-1} & 0 \ea \right)$ where $S$ is the left shift on bi-infinite sequences, i.e. $(Sx)_n := x_{n+1}$.  The problem is well posed in each $\ell^p$ space, but for concreteness and simplicity we work in $\ell^2$.  

The restriction of this problem to a finite lattice with Dirichlet boundary conditions was famously studied in 1955 on the MANIAC computer in order to determine rates of convergence to equipartition of energy \cite{FPU}.  Surprisingly, no such convergence occured.  Instead, the authors observed nearly quasiperiodic motion in which the energy remained mostly in the first few modes.  Later Zabusky and Kruskal \cite{ZabuskyKruskal} rediscovered the equation of Korteweg and DeVries (KdV) as a long-wavelength low amplitude limit of FPU and posited a connection between nearly recurrent states in FPU and soliton interaction in the KdV equation.  The KdV equation has gone on to become both important and well understood as an example of a completely integrable nonlinear dispersive wave equation.  For an excellent review of this productive period see Miura \cite{Miura}.  

Less understood is the theory of solitary waves and their interaction in non-integrable dispersive wave equations.  Martel, Merle, and their collaborators have studied the generalized KdV (gKdV) equation where they have obtained existence and stability of asymptotic multi-soliton states \cite{MMT}, as well as the existence and stability of ``nearly two-soliton solutions,''\cite{MMarXiv} that is, solutions which remain close to the linear superposition of a broad, small, slow-moving solitary wave and a narrow, large, fast-moving solitary wave for all time.  So long as initially the large wave is to the left of the small wave, the large wave will overtake the small wave in finite time.  Thus these solutions describe the collision between two solitary waves.  Here the solitary waves survive the collision despite the fact that the system is not integrable, but unlike the integrable case, the collision is inelastic - energy is transferred or scattered from the solitary waves to dispersive or radiative modes.
   
In this paper, we examine the interaction of solitary waves in the FPU model which unlike gKdV admits two-way wave motion.  In particular, rather than study the analogue of the KdV $2$-soliton solution in this model, we focus on the interaction of counterpropagating waves.  We restrict attention to the ``KdV'' regime of long-wavelength, low-amplitude initial data.  In this near-integrable regime, solitary waves and their stability are well understood.  In particular, see \cite{PW,PF1,PF2,PF3,PF4,Mizumachi} and the references therein.  Our results are consistent with those of Martel and Merle, and the reasonable conjecture that in the near-integrable regime, collisions are nearly elastic.
 
Our main result yields an open set of initial data $U \subset \ell^2$ such that for each $u_0 \in U$, the solution $u$ of $\eqref{FPU}$ with initial data $u(0) = u_0$ satisfies
$$
\lim_{t \to -\infty} \| u(t) - u_{\underline{c}_+}(t) - u_{\underline{c}_-}(t) \| \le Ce^{-\eps^{-\eta_1}} \qquad  
\lim_{t \to \infty} \|u(t) - u_{\overline{c}_+}(t) - u_{\overline{c}_-}(t) \| \le C\eps^{7/2 - \eta_0} $$
for some constant $C$ which doesn't depend on $\eps$ and small numbers $0 < \eta_1 < \eta_0 < 1/2$.  Here $u_{c_+}$ and $u_{c_-}$ denote right- and left- moving solitary wave solutions to $\eqref{FPU}$ and are order $\eps^{3/2}$ in the $\ell^2$ norm.  The wave speeds at $t = \infty$, which we write as $\overline{c}_\pm$ may be different from the wave speeds at $t = -\infty$ which we denote here by $\underline{c}_\pm$.  This is stated more precisely as Theorem $\ref{thm:Collision}$.  The interpretation is that before the collision, the error is exponentially small compared to the main waves and is due only to the weak interaction of the tails of the solitary waves.  However, the collision itself transfers or scatters energy from the coherent structures $u_{c_+}$ and $u_{c_-}$ to radiative modes and perhaps smaller coherent modes.  This scattering manifests in the error term for large positive times which is much larger than the error for large negative times, but remains very small compared with the size of the main waves. 

The strategy for the proof is to consider three different regimes: 
\begin{enumerate}
\item The pre-interaction regime where the solitary waves are well separated and moving towards each other.
\item The interaction regime where the solitary waves are not well separated.
\item The post-interaction regime where the solitary waves have already collided and are now well separated and are moving away from each other.
\end{enumerate}

The interaction regime is handled with a finite time energy estimate which is made precise in Theorem $\ref{thm:Finite_Time_Collision}$.  The pre- and post- interaction regimes are handled with a stability result which is made precise in Theorem $\ref{thm:Orbital_Stability}$.  In order for these theorems to work together to describe the collision of solitary waves, the stability theorem must (a) be valid for the perturbations incurred by the finite time approximation and (b) show that these perturbations remain small compared to the main solitary waves.  However, consider typical stability results, and in particular the ones in \cite{PW,PF4,Mizumachi} which have the following form: 
\begin{quote}
There exists a $\delta > 0$ and $C > 0$ such that whenever the initial perturbation is smaller\footnote{Here, in the interest of space, we are being intentionally vague about what we mean by ``perturbation'' and what we mean by ``small''.  This will be made clear later in the paper.} than $\delta_0 < \delta$ then the perturbation will remain smaller than $C \delta_0$ for all time.  
\end{quote}
In order to extend the stability results of \cite{PW,PF4,Mizumachi} to the two-soliton case, we must quantify both $\delta_0$ and $C$ above.  If $\delta_0$ were to be less than the error incurred by either the weak interaction or the finite-time error estimate, that would present a fundamental mathematical obstruction to extending stability results for a single solitary wave to the two-wave context via the method we present here.  Also, if $C$ were to be larger than the size of the main wave $u_{c_+}$ or $u_{c_-}$ divided by the error incurred by either the weak interaction or the finite time error estimate, then although we could prove a theorem which controls the size of the perturbation, we would be unable to say that the perturbation remains small compared to the main waves.  It turns out that in the stability analysis for solitary waves in FPU, the part of the perturbation which is localized near the solitary waves is relatively innocuous while any perturbation which is not localized is dangerous.  Our main stability result, Theorem $\ref{thm:Orbital_Stability}$, is stated in the form: 
\begin{quote}
There are positive numbers $\delta_{loc} \sim \eps^{3 + \eta}$, $\delta_{nonloc} \sim \eps^{9/2 + \eta}$, $C_{loc} \sim 1$, and $C_{nonloc} \sim \eps^{-3/2}$ such that so long as the (non)localized perturbation is smaller than $\delta_{(non)loc}$ then the perturbation remains smaller than $C_{loc}\delta_{loc} + C_{nonloc}\delta_{nonloc}$ for all time.  
\end{quote}
This allows us to prove that for $\eps$ sufficiently small, there are global-in-time solutions which are close to the linear superposition of two counter-propagating solitary waves for all positive and negative time.  Additionally the methods presented in this paper suggest a blueprint for studying the collision of small broad waves in a variety of dispersive systems which have two way wave motion.

We leave open the important problems of whether or not asymptotic multi-soliton states exist, i.e. whether or not initial data may be chosen so that the radiation terms decay to $0$ as $t \to \infty$, whether or not exact multi-soliton states exist, i.e. whether the initial data may be chosen so that the radiation decays to $0$ as $|t| \to \infty$, as well as a description of the collision of larger solitary waves.

\section{Main Results}

We assume that the potential $V$ satisfies 
\begin{itemize}
\item {\bf H1} \qquad $V \in C^4$ \qquad $V(0) = V'(0) = 0$ \qquad $V''(0) =1$ \qquad $V'''(0) = 1$.  
\end{itemize}

We remark that while requiring that $V''(0)$ and $V'''(0)$ both equal $1$ may appear restrictive, in fact it is not.  To see this, suppose $\tilde{V}(x) = \frac{1}{2}a x^2 + \frac{1}{6} b x^3 + \mathcal{O}(x^4)$ is a general potential with $\tilde{V}(0) = \tilde{V}'(0) = 0$ and $\tilde{V}''(0) > 0$ and $\tilde{V}'''(0) \ne 0$.  Write Newton's equations for a chain of coupled oscillators with interaction potential $\tilde{V}$:
$$\ddot{q}_n = \tilde{V}'(q_n - q_{n-1}) - \tilde{V}'(q_{n+1} - q_n).$$ 
Now make the change of variables $r_n(t) = \alpha( q_n(\beta t) - q_{n-1}(\beta t))$ and $p_n(t) = \alpha \beta \dot{q}_n(\beta t)$.  Then $u = (r,p)$ solves $\eqref{FPU}$ with $V$ given by 
$V(x) = \frac{1}{2} a \beta^2 x + \frac{1}{6} b (\beta^2/\alpha) + \mathcal{O}(x^4)$.
Choosing $\beta = a^{-1/2}$ and $\alpha = b/a $ gives us ${\bf H1}$.

It was proven in \cite{PF1,PF2,PF3,PF4} that under the hypothesis {\bf H1}, there is a $c_{upper} > 1$ and a family of solitary waves $u_c$ parameterized by wave speed $c \in (1,c_{upper}]$ for which $u(k,t) = u_c(k-ct - \gamma)$ is a solution for any $\gamma \in \R$ and moreover the following hold
\begin{itemize}
\item {\bf P1} The profile $r_c$ takes values in an open interval $I_* \subset \R$ containing $0$ such that $V''(r) > 0$ whenever $r \in I_*$.  Moreover $r_c$ is even and decays at the exponential rate $\mathrm{exp}(\kappa(c)|x|)$ where $\kappa(c)$ is the unique positive root of $\mathrm{sinh} \frac{1}{2} \kappa / \frac{1}{2} \kappa = c$.
\item {\bf P2} For each $c_{lower} \in (1,c_{upper})$ and for each $a \in (0,\kappa(c_{lower})/2)$, the map \\ $(\tau,c) \mapsto e^{a \cdot} u_c(\cdot - \tau)$ is $C^1$ from $\R \times (c_{lower},c_{upper})$ into $\ell^2$.  
\item {\bf P3} The energy of the solitary wave $H(u_c)$ satisfies $\frac{dH(u_c)}{dc} = \eps \int_\R \phi_\beta^2 + \mathcal{O}(\eps^3)$ where $\phi_\beta$ is the KdV soliton.  In particular $\frac{dH(u_c)}{dc} \ne 0$ for $\eps$ sufficiently small.  
\end{itemize} 

Here ${\bf P1}$ - ${\bf P3}$ were proven in \cite{PF1}.  Note that due to the time reversability of the equation $\eqref{FPU}$, the properties ${\bf P1 - P3}$ also hold for $c \in [-c_{upper}, -1)$.  In our application of {\bf P2} we require the map $(\tau,c) \mapsto e^{2a}u_c(\cdot - \tau)$ to be $C^3$. We show that it has this additional smoothness in the proof of Lemma $\ref{lem:small_eps_2}$.  

In the remainder of the paper we assume ${\bf H1}$.  Let $c_{upper}$ be as in ${\bf P1}$ - ${\bf P3}$, choose $c_{lower} \in (1,c_{upper})$ and fix $a = a(c_{lower})$ so that ${\bf P1}$ - ${\bf P3}$ hold whenever $\pm c \in (c_{lower},c_{upper})$.  Throughout the paper $K$ and $C$ refer to generic constants.  

Before we may precisely state our main results, we need to develop the weighted function spaces which we will work in.  Let $\ell^2$ be the Hilbert space of two-sided real-valued sequences $\{x_k\}_{k \in \Z}$ which satisfy $\| x \|^2 := \sum_{k \in \Z} x_k^2 < \infty$.  Denote by $\ell^2_a$ the weighted Hilbert space of two-sided real-valued sequences $\{x_k\}_{k \in \Z}$ which satisfy $\| x \|_a^2 := \sum_{k \in \Z} e^{2ak} x_k^2 < \infty$.  Given real valued functions of a real variable $\tau_+(t)$ and $\tau_-(t)$ define $\| x \|_{\tau_+(t)} := e^{-a\tau_+(t)} \| x \|_a = \left(\sum_{k \in \Z} e^{2a(k - \tau_+(t))}x_k^2\right)^\frac{1}{2}$.  Note that the weight $e^{ak}$ in the weighted norm $\| \cdot \|_{\tau_+(t)}$ travels in the frame of a wave $u_{c_+}(\cdot - \tau_+(t))$.  Similarly, we define $\| x \|_{\tau_-(t)} := e^{a\tau_-(t)} \| x \|_{-a}$.  In the following the function $\tau_\alpha$ is allowed to vary depending on the context.  For example, in $\eqref{eq:thm2_3}$ we take $\tau_\alpha \equiv \tau_\alpha^*$  for $\alpha \in \{+,-\}$ and in the statement of Theorem $\ref{thm:Finite_Time_Collision}$ we take $\tau_\alpha = \alpha (c_\alpha t - T_0)$.  Aside from these two theorems, $\tau_\alpha$ will denote the phase of a solitary wave which is allowed to vary in a way fixed by the modulation equations in section $3$.  In the following we will abuse notation, writing $\| x(t) \|_\alpha$ to denote $\| x(t) \|_{\tau_\alpha(t)}$.  We also write $x_+$ and $x_-$ to be the spatially right and left parts of $x$ respectively.  That is $x_+(k) = x(k)$ when $k \ge 0$ and $x_+(k) = 0$ otherwise; similarly $x_-(k) := x(k) - x_+(k)$.

We may now state precisely our main result.
\begin{thm} \label{thm:Collision}
Assume (H1).  For each $0 < \eta_1 < \eta_0 < 1/2$, there is an $\eps_0 > 0$ such that for each $0 < \eps < \eps_0$ there is a constant $C > 0$ and an open set $U \subset \ell^2$ such that for each $u_0 \in U$ there are smooth real valued functions of a real variable $c_+$, $c_-$, $\tau_+$, and $\tau_-$ such that if $u$ denotes the solution of $\eqref{FPU}$ with $u(0) = u_0$, then its perturbation from the sum of two solitary waves $$v(t) := u(\cdot,t) - u_{c_+(t)}(\cdot - \tau_+(t)) - u_{c_-(t)}(\cdot - \tau_-(t))$$ satisfies 
\begin{equation}
\| v(t) \| \le C e^{-\eps^{-\eta_1}} \qquad t < 0  \label{eq:thm1_1}
\end{equation}
and
\begin{equation}
\| v(t) \| \le C \eps^{7/2 - 2\eta_0} \qquad t > 0.
\label{eq:thm1_2}
\end{equation}
Moreover, the weighted perturbation decays:
\begin{equation}
\lim_{t \to \infty} \sum_{k \in \Z} (e^{2a(k - \tau_+(t))} + e^{-2a(k - \tau_-(t))}) v(k,t)^2 = \lim_{t \to -\infty} \sum_{k \in \Z} (e^{-2a(k - \tau_+(t))} + e^{2a(k - \tau_-(t))}) v(k,t)^2 = 0  \label{eq:thm1_3}
\end{equation}

Furthermore, the limits
$$ \lim_{t \to \infty} (c_+(t),c_-(t),\dot{\tau}_+(t),\dot{\tau}_-(t)) \qquad \mbox{ and } \qquad  \lim_{t \to -\infty} (c_+(t),c_-(t),\dot{\tau}_+(t),\dot{\tau}_-(t))$$
exist with $\lim_{t \to \pm \infty} \dot{\tau}_\alpha(t) - c_\alpha(t) = 0$ for $\alpha \in \{+,-\}$.

In addition, there is a subset of $U$ which lies in the weighted space $U_0 \subset \ell^2_a \cap \ell^2_{-a} \cap U$ and which is open in $\ell^2_a \cap \ell^2_{-a}$ such that whenever $u_0 \in U_0$ the weighted perturbations decay at an exponential rate.  That is, there is a $b > 0$ such that the solution $u$ of $\eqref{FPU}$ with $u(0) = u_0$ satisfies
\begin{equation}
\sum_{k \in \Z} (e^{2a(k - \tau_+(t))} + e^{-2a(k - \tau_-(t))}) v(k,t)^2 \le C e^{-bt} \qquad t > 0  \label{eq:thm1_4}
\end{equation}
and
\begin{equation}
\sum_{k \in \Z} (e^{-2a(k - \tau_+(t))} + e^{2a(k - \tau_-(t))})v(k,t)^2 \le C e^{bt} \qquad t < 0  \label{eq:thm1_5}
\end{equation}

\end{thm}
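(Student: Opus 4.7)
The plan follows the three-regime blueprint sketched in the introduction. Fix asymptotic parameters $\underline c_\pm$ with $\pm\underline c_\pm \in (c_{lower},c_{upper})$ and approach phases $\underline\tau_\pm$, and choose a cutoff time $T_0 \sim \eps^{-1-\eta_1}$. At $|t| = T_0$ two solitary waves with positions $\underline\tau_\pm(0) \pm \underline c_\pm t$ are separated by a distance of order $T_0$, and since their profiles decay like $\exp(-\kappa(c)|x|)$ with $\kappa(c) \sim \eps$, the \emph{weak interaction} between the tails has $\ell^2$-size $\exp(-\eps T_0) = \exp(-\eps^{-\eta_1})$. The set $U$ will be an open $\ell^2$-neighborhood of the family of approximate two-soliton states $u_{\underline c_+}(\cdot - \underline\tau_+(0)) + u_{\underline c_-}(\cdot - \underline\tau_-(0))$ parameterized by these free parameters, plus small $\ell^2$-corrections at $t=0$.

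For $t \le -T_0$ I would apply the orbital stability result (Theorem \ref{thm:Orbital_Stability}) run backwards in time. In reverse time the waves separate, so the weak-interaction residual remains exponentially small, and the modulation framework of Section 3 produces smooth $c_\pm(t),\tau_\pm(t)$ on $(-\infty,-T_0]$ with residual $v(t) := u(t) - u_{c_+(t)}(\cdot - \tau_+(t)) - u_{c_-(t)}(\cdot - \tau_-(t))$ obeying $\|v(t)\| \lesssim e^{-\eps^{-\eta_1}}$, giving \eqref{eq:thm1_1}, the backward limits of $(c_\pm,\dot\tau_\pm)$, and the backward weighted decay in \eqref{eq:thm1_3}. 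For $|t| \le T_0$ the solution is tracked by the finite-time collision estimate (Theorem \ref{thm:Finite_Time_Collision}), yielding post-collision parameters $(\overline c_\pm,\overline\tau_\pm)$ at $t = T_0$ and a residual $\|v(T_0)\| \lesssim \eps^{7/2 - \eta_0}$. Finally for $t \ge T_0$ I re-apply Theorem \ref{thm:Orbital_Stability} forward in time, taking the state at $t = T_0$ as its input; this produces \eqref{eq:thm1_2}, the forward limits $\overline c_\pm$ and $\overline{\dot\tau}_\pm$, and the forward parts of \eqref{eq:thm1_3}. When the initial data additionally lies in $\ell^2_a \cap \ell^2_{-a}$ the exponential decay \eqref{eq:thm1_4}--\eqref{eq:thm1_5} of the weighted residuals is read off from the exponential-decay component of the same stability theorem, because the $a$-weighted norms in the theorem already control precisely the quantities appearing on the left-hand sides.

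The principal obstacle, exactly the one the authors emphasize before stating Theorem \ref{thm:Orbital_Stability}, is compatibility of the three regimes: the perturbation of size $\eps^{7/2 - \eta_0}$ deposited by the interaction must sit below the stability thresholds $\delta_{loc} \sim \eps^{3+\eta}$ and $\delta_{nonloc} \sim \eps^{9/2+\eta}$, and the amplification $C_{loc}\delta_{loc} + C_{nonloc}\delta_{nonloc} \sim \eps^{3+\eta}$ must remain much smaller than the $\eps^{3/2}$ amplitude of the main wave. This forces a careful splitting $v = v_{loc} + v_{nonloc}$ using the weights $e^{\pm a(k - \tau_\pm(t))}$ traveling with each solitary wave, placing the innocuous but comparatively large portion of the interaction residual in $v_{loc}$ and the dangerous but polynomially small portion in $v_{nonloc}$. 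Verifying that the residual produced by Theorem \ref{thm:Finite_Time_Collision} admits such a splitting with the correct exponents, and establishing openness of $U$ in $\ell^2$ via continuous dependence of the whole construction on $\underline c_\pm$, $\underline\tau_\pm$, and on $\ell^2$-perturbations of the data at $t = -T_0$, then complete the argument.
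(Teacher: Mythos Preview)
Your proposal is essentially the paper's own proof: the three-regime decomposition, the backward application of Theorem~\ref{thm:Orbital_Stability} via the symmetry $u(n,t)\mapsto u(-n,-t)$, the use of Theorem~\ref{thm:Finite_Time_Collision} through the interaction, and the forward application of Theorem~\ref{thm:Orbital_Stability} with the $v_1/v_2$ (nonlocalized/localized) splitting are exactly what the authors do, and you have correctly identified the compatibility of the thresholds $\delta_{loc}\sim\eps^{3+\eta}$, $\delta_{nonloc}\sim\eps^{9/2+\eta}$ with the collision output $\eps^{7/2-\eta_0}$, $\eps^{11/2-3\eta_0}$ as the crux.

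Two small points where the paper is sharper than your sketch. First, your time bookkeeping puts the collision near $t=0$, but then \eqref{eq:thm1_1} cannot hold on $(-T_0,0)$; the paper instead takes $t=0$ in the \emph{pre}-interaction regime with $\tau_+^*=-\tau_-^*=\eps^{-1-\eta_1}$, so backward evolution only separates the waves further and the exponential bound holds for all $t<0$. Second, the paper flags a subtlety you do not mention: under the time-reversal symmetry the weighted norms $\|\cdot\|_\alpha$ change meaning, because for separating waves the weights ignore mass between the solitons while for approaching waves they do not. This forces the initial perturbation at $t=0$ to be taken somewhat smaller than naively expected in order that, after passing through Theorem~\ref{thm:Finite_Time_Collision}, the localized piece $v_{20}$ lands in the correct weighted space for the forward stability step. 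Neither point changes the architecture of the argument.
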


Theorem $\ref{thm:Collision}$ asserts the existence of solutions which remain close to the linear superposition of two counter-propagating solitary waves forever, both forward and backward in time.  In particular, solitary waves persist after collision.  Moreover, the energy which is transfered from the solitary waves to radiative modes or smaller solitary waves is small relative to the main waves, whose $\ell^2$ norms are $\mathcal{O}(\eps^{3/2})$.  Furthermore, these counter-propagating wave solutions are stable with respect to perturbations in $\ell^2$.  In addition, solutions which differ from the sum of two counter-propagating solitary waves by a sufficiently small exponentially localized perturbation at some initial time, continue to differ from the sum of solitary waves by a small exponentially localized amount, both forward and backward in time.

In our proof we combine an orbital stability result for weakly interacting waves with a finite time approximation result for strongly interacting waves.  There is a tradeoff here between the exponential control of the radiation at $t < 0$ and the polynomial control over the radiation for $t > 0$.  This tradeoff is a result of choosing the time period over which one uses the finite time approximation.  If one uses a relatively short time period, so that the waves just have time to pass through each other, then the polynomial error incurred by the approximation is smaller.  This corresponds to a small value of $\eta_0$ above.  However, this has consequences for the bounds in the orbital stability result because if the waves start out relatively close to each other, then the cross terms which come from the interaction will be larger.  This corresponds to a small value of $\eta_1$ above.  On the other hand, if one uses finite time approximations for a larger time period, then the error incurred by the approximation will be larger, but the interaction terms before and after this period will be smaller, corresponding to relatively large values of $\eta_1$ and $\eta_0$.
 
The orbital stability and finite time approximation results are of independent interest and we now state them as separate theorems.  

\begin{thm} \label{thm:Orbital_Stability}
Assume (H1).  For each $\eta_1 > 0$ there are positive constants $C$, $\delta_{loc}$, $\delta_{nonloc}$, and $\eps_0$ such that for any $\eps \in (0,\eps_0)$ the following hold.  Suppose that there is a decomposition
$$ u_0(\cdot) - u_{c_+^*}(\cdot - \tau_+^*) - u_{c_-^*}(\cdot - \tau_-^*) = v_{10} + v_{20}$$ 
such that $\| v_{10} \| < \delta_{nonloc}$ and $\| (v_{20}) \|_+ + \| (v_{20})_-\|_- + \| v_{20} \| < \delta_{loc}$
with $\tau_-^* < - C \eps^{-1 -\eta_1}  < C \eps^{-1 -\eta} < \tau_+^*$, $\delta_{nonloc} < C \eps^{9/2 + \eta}$, and $\delta_{loc} < C \eps^{3 + \eta}$.
Then there are smooth real valued functions of a positive real variable $c_+$, $c_-$, $\tau_+$, and $\tau_-$ such that the solution $u$ of $\eqref{FPU}$ with $u(0) = u_0$ satisfies
\begin{equation}
 \|u(t,\cdot) - u_{c_+(t)}(\cdot - \tau_+(t)) - u_{c_-(t)}(\cdot - \tau_-(t)) \| <  C\eps^{-3/2} \delta_{nonloc} + C \delta_{loc} + C e^{-\eps^{-\eta_1^{-1}}} \qquad  \mbox{ for all } t > 0
 \label{eq:thm2_1}
 \end{equation}
and
\begin{equation}
\lim_{t \to \infty} \|u(t,\cdot) - u_{c_+(t)}(\cdot - \tau_+(t)) - u_{c_-(t)}(\cdot - \tau_-(t)) \|_\alpha = 0
\label{eq:thm2_2}
 \end{equation} 
Moreover, the limit
$ \lim_{t \to \infty} (c_+(t),c_-(t),\dot{\tau}_+(t),\dot{\tau}_-(t))$
exists with $\lim_{t \to \infty} \dot{\tau}_\alpha(t) = c_\alpha(t)$ for $\alpha \in \{+,-\}$.

Furthermore, if $v_{10} = 0$, then 
\begin{equation}
\| v(t)_\alpha \|_\alpha \le Ce^{-bt} \qquad \alpha \in \{+,-\}
\label{eq:thm2_3}
\end{equation}
where in the small $\eps$ regime $b \sim \eps^3$.
\end{thm}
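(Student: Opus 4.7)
The plan is to extend the single-wave orbital stability machinery of \cite{PW,PF4,Mizumachi} to two well-separated waves by treating each wave's modulation problem independently up to exponentially small cross terms. Writing
\[
u(t) = u_{c_+(t)}(\cdot - \tau_+(t)) + u_{c_-(t)}(\cdot - \tau_-(t)) + v(t),
\]
I would invoke the implicit function theorem to select $c_\pm(t), \tau_\pm(t)$ so that $v(t)$ is symplectically orthogonal to the four tangent vectors $\partial_c u_{c_\pm}(\cdot - \tau_\pm)$ and $\partial_\xi u_{c_\pm}(\cdot - \tau_\pm)$. Property \textbf{P3} supplies the needed non-degeneracy, \textbf{P2} the smoothness, and the separation hypothesis $\tau_+^* - \tau_-^* \gtrsim \eps^{-1-\eta_1}$ combined with the exponential decay in \textbf{P1} guarantees that the relevant $4 \times 4$ Gram matrix is block-diagonal up to an $e^{-\eps^{-\eta_1^{-1}}}$ correction, hence invertible.

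Differentiating the orthogonality conditions yields ODEs for $\dot c_\pm$ and $\dot \tau_\pm - c_\pm$ whose right-hand sides are linear in $v$ (with coefficients localized near the respective waves) plus a pure cross-wave forcing generated by $V'(u_{c_+}+u_{c_-}) - V'(u_{c_+}) - V'(u_{c_-})$. This forcing is pointwise bounded by $|u_{c_+}|\,|u_{c_-}|$, which by \textbf{P1} and the separation hypothesis is of order $e^{-\eps^{-\eta_1^{-1}}}$ in every norm we use, producing the interaction error term in \eqref{eq:thm2_1}. For the coercive energy estimate I would form, for $\alpha \in \{+,-\}$, a Lyapunov functional $\mathcal{L}_\alpha(u) := H(u) - c_\alpha\, \mathcal{Q}(u)$ with $\mathcal{Q}$ a conserved momentum-like quantity, and expand it around $u_{c_\alpha}(\cdot - \tau_\alpha)$ in the weighted norm $\|\cdot\|_\alpha$. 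Restricted to the symplectic orthogonal of the modulation directions, the quadratic part is coercive and the linearized generator has its spectrum in $\{\Re\lambda \le -b\}$ with $b \sim \eps^3$, by the spectral gap estimate of \cite{PW,PF4} applied in the frame of a single solitary wave; the second wave enters this expansion only through $e^{-\eps^{-\eta_1^{-1}}}$ terms. Differentiating in $t$ and substituting the modulation ODEs produces
\[
\frac{d}{dt}\|v(t)\|_\alpha^2 \le -b\,\|v(t)\|_\alpha^2 + O\bigl(\|v(t)\|_\alpha^3\bigr) + O\bigl(e^{-\eps^{-\eta_1^{-1}}}\bigr),
\]
from which Gronwall gives the exponential decay \eqref{eq:thm2_3} when $v_{10} = 0$ and, in general, the vanishing \eqref{eq:thm2_2}.

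The main obstacle is the unweighted $\ell^2$ control when $v_{10}\ne 0$: the weights $e^{\pm a(k-\tau_\pm)}$ see nothing on the side of each wave opposite to its motion, so a nonlocalized perturbation need not decay and must instead be shown to stay uniformly bounded. I would propagate the decomposition $v = v_1 + v_2$ by estimating the nonlocalized part through conservation of the quadratic part of $H$ (itself coercive on $\ell^2$ since $V''(0) = 1$), with the wave-perturbation cross terms absorbed into the modulation equations. This is where the loss $\eps^{-3/2}$ in $C\eps^{-3/2}\delta_{nonloc}$ originates: the modulation parameter $c_\pm$ is driven by $\ell^2$-pairings of $v$ against $\partial_c u_{c_\pm}$ whose $\ell^2$ norm is of order $\eps^{1/2}$, so an $O(\delta_{nonloc})$ forcing integrated over the $O(\eps^{-3})$ decay time produces an $O(\eps^{-3/2}\delta_{nonloc})$ drift in the wave profile that gets absorbed into $v$. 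The assumed scalings $\delta_{nonloc} \lesssim \eps^{9/2+\eta}$ and $\delta_{loc} \lesssim \eps^{3+\eta}$ are exactly what is needed to close this bootstrap against the cubic nonlinearity, and the long-time limits of $c_\pm,\dot\tau_\pm$ then follow from the integrability of the modulation ODEs' right-hand sides on $(0,\infty)$.
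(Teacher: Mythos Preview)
Your modulation setup and cross-term estimates are right in spirit, but there are two genuine gaps.

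First, the weighted-norm decay \eqref{eq:thm2_2}--\eqref{eq:thm2_3} cannot be obtained from a Lyapunov functional $H - c_\alpha \mathcal{Q}$. Such functionals (as in GSS or \cite{PF2}) are coercive in the \emph{unweighted} $\ell^2$ norm and control orbital stability there, but there is no conserved or monotone quantity dominating $\|v\|_\alpha^2$ whose time derivative produces a dissipation term $-b\|v\|_\alpha^2$. The linearized operator $JH''(u_{c_\alpha})$ is not dissipative in $\ell^2_{\alpha a}$; its evolution semigroup decays (Proposition~\ref{prop:Toda_sg}, proved via a B\"{a}cklund transformation for Toda), but that is a semigroup statement, not a differential inequality. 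The paper instead projects $v_{2,\alpha}$ onto the symplectic complement, writes a variation-of-constants formula \eqref{E11} for $Q_\alpha v_{2,\alpha}$, bounds the inhomogeneity $G_\alpha$ in the weighted norm (Lemma~\ref{lem:RHS_est_Mizu}, Lemma~\ref{lem:Qalpha}), and closes with Gronwall on the resulting integral inequality.

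Second, you have not specified how $v_1$ evolves, and this is the crux of the nonlocalized estimate. The paper, following \cite{Mizumachi}, takes $v_1$ to be an \emph{exact solution of \eqref{FPU}} with data $v_{10}$. This is precisely what enables the virial estimate (Lemma~\ref{Lem:Virial}): for small exact solutions, $\int_{t_0}^\infty \|v_1(s)\|_{W_\alpha}^2\,ds \le C\eps^{-4}\|v_1(t_0)\|^2$, so $v_1$ cannot linger near either wave even though $\|v_1(t)\|$ need not decay. Your proposal to use ``conservation of the quadratic part of $H$'' only bounds $\sup_t\|v_1(t)\|$, which is not enough to integrate the modulation equations. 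The actual $\eps^{-3/2}$ chain is: virial gives $\int\|v_1\|_{W_\alpha}^2 \lesssim \eps^{-4}\delta_{nonloc}^2$; the modulation bound $|\dot c_\alpha| \lesssim \eps^2(\|v_1\|_{W_\alpha}^2 + \|v_{2,\alpha}\|_\alpha^2)$ then integrates to $|c_\alpha(t)-c_\alpha(t_0)| \lesssim \eps^{-4}\delta_{nonloc}^2$; and the convex-Hamiltonian energy estimate (Lemma~\ref{apriori2}) converts this via $\|v\|^2 \lesssim \eps\,|c-c_0|$ into $\|v\|^2 \lesssim \eps^{-3}\delta_{nonloc}^2$. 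Your heuristic (``$\delta_{nonloc}$ forcing over $\eps^{-3}$ decay time'') does not reproduce this and would give the wrong power.
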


Note that we are fairly quantitative in determining the small $\eps$ asymptotics for the initial data whose evolution we are able to control as well as for the degree to which we are able to control it.  This is an essential feature of our method as the stability result must be able to tolerate the error incurred by the following finite time estimate.

\begin{thm} \label{thm:Finite_Time_Collision} 
Assume $H1$.  For any $0 < \eta_1 < \eta_0 < 1$, there exist positive constants  $\eps_0$, $C_0$, $C_1$, and $E_0$ such that if $c_{-} = -1 - \frac{1}{12} \eps^2 \beta_{-}$ and $c_{+} = 1+ \frac{1}{12} \eps^2 
\beta_{+}$, with $|\beta_{\pm}| \le 1$ and $0 < \eps < \eps_0$ and $0 < E < E_0$, then whenever $u$ is a solution of $\eqref{FPU}$ with 
$$ \| u(0) - u_{c_+}(\cdot - \tau_+) - u_{c_-}(\cdot - \tau_-) \| <  \eps^{11/2-\eta_0} E $$
for some $\eta > 0$, then the difference between the solution of $\eqref{FPU}$ with initial condition $u(0)$ and the linear superposition of waves has a decomposition
$$ u(t) -  u_{c_+}(\cdot - c_+ t - \tau_+) - u_{c_-}(\cdot - c_- t - \tau_-) = v_1(t) + v_2(t)$$ 
such that 
$$\| v_1(t) \| \le C_0\eps^{11/2 - 3 \eta_0}(E + 1)$$
and
$$ \|v_2(t)\| + \| v_2(t) \|_+ + \| v_2(t)\|_- \le C_0 \eps^{7/2 - \eta_0}(E + 1) $$
hold for $ 0 < t < C_0 \eps^{-1 - \eta_1}$.
If in addition, $$\| u(0) - u_{c_+}(\cdot - \tau_+) - u_{c_-}(\cdot - \tau_-)\|_+ + \| u(0) - u_{c_+}(\cdot - \tau_+) - u_{c_-}(\cdot - \tau_-)\|_-  \le
C_1 \eps^{7/2 - \eta_0} $$ then we may take $v_1 \equiv 0$.
\end{thm}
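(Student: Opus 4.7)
My plan is to analyze the perturbation $v(t) := u(t) - w(t)$ of the linear superposition $w(t) := u_{c_+}(\cdot - c_+ t - \tau_+) + u_{c_-}(\cdot - c_- t - \tau_-)$. Because each summand is an exact traveling-wave solution of \eqref{FPU}, Taylor expanding $JH'(w+v)$ around $w$ yields
\be
v_t = JH''(w)\,v + N(v,w) - F, \qquad F \;:=\; JH'(u_+ + u_-) - JH'(u_+) - JH'(u_-),
\ee
where $N$ collects contributions at least quadratic in $v$ and $F$ is the interaction residual produced by the two profiles overlapping. Using H1 and the expansion $V'(r) = r + \frac{1}{2} r^2 + O(r^3)$, the leading part of $F$ is the bilinear coupling $r_+(\cdot - c_+ t - \tau_+)\,r_-(\cdot - c_- t - \tau_-)$; since $r_\pm$ are KdV-scale profiles of amplitude $O(\eps^2)$, width $O(\eps^{-1})$, and exponential rate $\kappa(c_\pm)\sim\eps$, a pointwise product estimate gives $\|F(s)\|_{\ell^2}\lesssim\eps^{7/2}$ at the collision peak, with exponential decay in the separation of the two centers.

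Next, I would decompose $v = v_1 + v_2$, with $v_1$ carrying the nonlocal part of the initial perturbation and $v_2$ absorbing the source $F$ together with the weighted-localized part of the initial datum; under the additional hypothesis $\|u(0)-w(0)\|_+ + \|u(0)-w(0)\|_- \le C_1\eps^{7/2-\eta_0}$ the entire perturbation can be placed in $v_2$ and $v_1$ taken to be zero. The piece $v_1$ satisfies an equation of the form $\partial_t v_1 = JH''(w)v_1 + (\text{coupling})$; since $L_0 := JH''(0) = J$ is skew-symmetric on $\ell^2$ and $JH''(w) - J$ has norm bounded by $\|V''(r) - 1\|_\infty \lesssim \|w\|_\infty \sim \eps^2$, the Gronwall factor $e^{C\eps^2 t}$ is harmless over the time window $t \le C_0\eps^{-1-\eta_1}$, and the additional $\eps^{-2\eta_0}$-sized feedback from the coupling to the source and to $v_2$ produces the stated bound $\|v_1\| \le C_0\eps^{11/2-3\eta_0}(E+1)$.

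For $v_2$ I would run coupled energy estimates in $\ell^2$, $\|\cdot\|_+$, and $\|\cdot\|_-$, where the two weighted norms are defined so that their time-dependent weights travel with the corresponding wave. This produces the favorable drift $-2a|c_\pm|\,\|v_2\|_\pm^2$ in $\frac{d}{dt}\|v_2\|_\pm^2$ and, more importantly, lets one split the source as $F = F_+ + F_-$ with $F_\pm$ attached to the corresponding wave and controlled in the matching weighted norm: in the $+$-norm the left-traveling factor of $F_+$ exhibits exponential temporal decay away from the collision window. This weighted structure enables the Duhamel argument for $v_2$ to close with the target bound $\|v_2\| + \|v_2\|_+ + \|v_2\|_- \le C_0\eps^{7/2-\eta_0}(E+1)$.

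The main obstacle is that the direct unweighted Duhamel estimate yields
\be
\int_0^t \|F(s)\|_{\ell^2}\,ds \;\lesssim\; \eps^{7/2}\cdot\eps^{-1} = \eps^{5/2},
\ee
which is too weak by a full power of $\eps$. Closing this gap requires exploiting the fact that the two constituent waves propagate in opposite directions with $|c_+ - c_-|\sim 2$, which makes the bilinear interaction non-resonant in a precise sense. Implementing this cleanly --- either via the weighted energy method sketched above or via an equivalent integration-by-parts / near-identity transformation that absorbs the quadratic coupling --- while simultaneously tracking the $\eta_0$- and $\eta_1$-losses coming from the $O(\eps^{-1})$ collision width and the $O(\eps^{-1-\eta_1})$ time window, and verifying that the higher-order contributions $N(v,w)$ do not spoil the $v_1/v_2$ decomposition, will be the bulk of the technical work.
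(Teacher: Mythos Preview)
Your overall framing is sound and you correctly isolate the interaction residual $F$, but the proposal has two concrete gaps.

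First, the obstacle is mis-located. You write $\|F(s)\|\lesssim\eps^{7/2}$, but $F$ carries the discrete derivative from $J$: writing $(I-S^{-1})(r_+r_-)=r_+(I-S^{-1})r_- + (S^{-1}r_-)(I-S^{-1})r_+$ and using $\|(I-S^{-1})r_\pm\|_\infty\lesssim\eps^3$, $\|r_\pm\|_\infty\lesssim\eps^2$, $\|r_\pm\|_\alpha\lesssim\eps^{3/2}$, $\|(I-S^{-1})r_\pm\|_\alpha\lesssim\eps^{5/2}$, one gets $\|F\|$ and $\|F\|_\alpha$ of order $\eps^{9/2}$, not $\eps^{7/2}$. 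Over the window $t\lesssim\eps^{-1-\eta_0}$ Duhamel then already gives $\eps^{7/2-\eta_0}$, which is exactly the $v_2$ bound; no ``non-resonance'' argument is needed for that part. The real difficulty is the \emph{finer} bound $\|v_1\|\lesssim\eps^{11/2-3\eta_0}$: you must peel off two further orders from the source, and neither the weighted drift you describe nor a single Duhamel pass can manufacture that.

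Second, and relatedly, the decomposition $v=v_1+v_2$ cannot be obtained by simply assigning the initial datum to $v_1$ and the source to $v_2$; without further structure the coupling $(V''(w)-1)v_2$ back into the $v_1$ equation is of size $\eps^2\cdot\eps^{7/2-\eta_0}=\eps^{11/2-\eta_0}$ and, integrated over $t\lesssim\eps^{-1-\eta_0}$, already destroys the $\eps^{11/2-3\eta_0}$ target. The paper's remedy is the ``near-identity transformation'' you allude to, carried out \emph{iteratively}: one defines $v_2:=\eps^{7/2-\eta_0}\phi+\eps^{9/2-2\eta_0}\psi$, where $\phi$ solves the \emph{linear} discrete wave equation with source $2\eps^{\eta_0-7/2}(I-S^{-1})(r_+r_-)$ and $\psi$ solves the linear discrete wave equation with source $2\eps^{\eta_0-1}(I-S^{-1})[(r_++r_-)\phi_1]$; a simple energy lemma keeps $\phi,\psi=O(1)$ in $\ell^2$ and in both weighted norms on the window. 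After subtracting these two correctors, the residual $(R,P):=\eps^{3\eta_0-11/2}v_1$ satisfies a discrete wave equation whose inhomogeneity is genuinely $O(\eps^{1+\eta_0})$, and the same energy lemma closes. A final remark: the ``favorable drift $-2a|c_\pm|\|v_2\|_\pm^2$'' you invoke is in fact nearly cancelled by the weighted commutator of the shift (the net is $e^a(1-e^{-2a})-2ac_\alpha=O(a\eps^2)$), so it produces no decay --- it merely keeps the weighted energy from growing, which is all that is needed here.
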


The proof of Theorem $3$ follows from a straightforward, albeit nonstandard, energy estimate.  The following lemma will help to streamline the proof.  

\begin{lemma} \label{lem:dWE}
Consider the nonlinear inhomogeneous discrete wave equation
$$ \left\{ \ba{l} \dot{r} = (S - I)p \\ \\ \dot{p} = (I - S^{-1})r + g(t,r,p) \ea \right.$$
Let $T_0 \in \R$ and $a \sim \eps$ be given and let $c_\alpha$ be given such that $\alpha c_\alpha - 1 \sim \eps^2$ for $\alpha \in \{+,-\}$.  Let $\| \cdot \|$ denote the standard $\ell^2$ norm and let $\| \cdot \|_\alpha$ denote the weighted norm with phase $\tau_\alpha = c_\alpha t - \alpha T_0$.

Assume that there are positive constants $C_0$, $C_1$, and $\eta_0 \in (0,1)$ which do not depend on $\eps$ such that for $\eps$ sufficiently small we have $\| g(t,r,p) \| \le C_0\eps^{1 + \eta_0}$ for $0 < t < C_1\eps^{-1 - \eta_0}$
Then  
$$ \| r(t) \|^2 + \|p(t) \|^2 \le e^{C_0C_1}(\|r(0)\|^2 + \|p(0)\|^2 + 1) \qquad 0 \le t \le  C_1 \eps^{-1 - \eta_0}$$
Similarly, if $\| g(t,r,p) \|_\alpha \le C_0\eps^{1 + \eta_0}$ for $0 < t < C_1 \eps^{-1 - \eta_0}$ then for $\eps$ sufficiently small 
$$\| r(t) \|_\alpha^2 + \|p(t)\|_\alpha^2 \le e^{2C_0C_1} (\|r(0)\|^2 + \|p(0)\|^2 + 1) \qquad 0 \le t \le C_1 \eps^{-1 - \eta_0}$$
\end{lemma}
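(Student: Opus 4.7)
The plan is to run a Grönwall estimate on the energy functional $E(t) = \|r(t)\|^2 + \|p(t)\|^2$ (respectively its weighted analogue $E_\alpha(t) = \|r(t)\|_\alpha^2 + \|p(t)\|_\alpha^2$). The common core of both cases is that differentiation in $t$ produces an inhomogeneous contribution $2\langle p, g\rangle$ (in the appropriate inner product), which Cauchy--Schwarz bounds by $2\sqrt{E}\|g\|$. Combining with the elementary inequality $2\sqrt{E} \le E + 1$ yields a differential inequality of the form $(d/dt)\log(E+1) \le \|g\| + (\text{linear defect})$, and integrating the hypothesis $\|g\| \le C_0 \eps^{1+\eta_0}$ over a window of length $C_1 \eps^{-1-\eta_0}$ produces the claimed factor $e^{C_0 C_1}$.

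In the unweighted case the linear defect vanishes exactly: the linear operator governing the equation is skew-adjoint on $\ell^2$ because $S^* = S^{-1}$, so the cross terms $\langle r, (S-I)p\rangle + \langle p, (I-S^{-1})r\rangle$ cancel. Thus $\dot E = 2\langle p, g\rangle$ and the Grönwall step above gives the first claim directly.

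The weighted case requires more bookkeeping but follows the same script. Differentiating the moving weight $e^{2\alpha a(k - \tau_\alpha(t))}$ in $t$ and using $\dot\tau_\alpha = c_\alpha$ together with $\alpha c_\alpha > 0$ contributes the term $-2a|c_\alpha| E_\alpha$. Moreover $S$ is no longer skew-adjoint with respect to $\langle\cdot,\cdot\rangle_\alpha$: an index shift shows
\[
\langle r, (S-I)p\rangle_\alpha + \langle p, (I-S^{-1})r\rangle_\alpha = (e^{-2\alpha a} - 1)\langle S^{-1} r, p\rangle_\alpha,
\]
and since $\|S^{-1} r\|_\alpha = e^{\alpha a} \|r\|_\alpha$, this defect is bounded by $2\sinh(a)\|r\|_\alpha\|p\|_\alpha \le \sinh(a) E_\alpha$. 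The key cancellation is that the aggregated linear contribution to $\dot E_\alpha$ is
\[
(2\sinh(a) - 2a|c_\alpha|) E_\alpha = 2a(1 - |c_\alpha|) E_\alpha + O(a^3) E_\alpha = O(\eps^3) E_\alpha,
\]
by the hypotheses $a \sim \eps$ and $\alpha c_\alpha - 1 \sim \eps^2$. The Grönwall step then runs as before with an extra contribution $\int_0^t O(\eps^3)\,ds \le C C_1 \eps^{2-\eta_0}$ in the exponent, which for $\eps$ sufficiently small is absorbed into $C_0 C_1$, producing the bound with exponent $2 C_0 C_1$.

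The one delicate step, and indeed the whole point of the lemma, is the last cancellation. If the relationship $a \sim \eps$, $|c_\alpha| - 1 \sim \eps^2$ were not so precisely calibrated, the defect from the failure of skew-adjointness would grow like $\eps \cdot t \sim \eps^{-\eta_0}$ over the long time interval, and the exponential factor in Grönwall would blow up. The scaling hypotheses on $a$ and $c_\alpha$ are exactly what is needed to keep the Grönwall exponent $O(1)$ independent of $\eps$.
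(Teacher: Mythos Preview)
Your proposal is correct and follows essentially the same route as the paper: define the energy $E(t)=\|r\|^2+\|p\|^2$ (resp.\ $E_\alpha$), differentiate, use that $S^*=S^{-1}$ in $\ell^2$ so the linear part is skew and only $\langle p,g\rangle$ survives, bound this via Cauchy--Schwarz and $\|p\|\le E+1$, and Gr\"onwall; in the weighted case the paper likewise computes the linear defect as $(e^a(1-e^{-2a})-2a\alpha c_\alpha)E_\alpha=(2\sinh a-2a|c_\alpha|)E_\alpha=O(\eps^3)E_\alpha$ and absorbs it for small $\eps$. Your identification of the cancellation $2\sinh a-2a|c_\alpha|=2a(1-|c_\alpha|)+O(a^3)=O(\eps^3)$ as the crux is exactly the point the paper makes.
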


\begin{proof}
We consider first the unweighted $\ell^2$ norm.  Let $\mathcal{E}(t) := \| r(t) \|^2 + \|p(t)\|^2$.  Compute 
$$
 \frac{d}{dt} \mathcal{E}(t)  =  \langle r, (S - I)p \rangle + \langle p, (I - S^{-1})r \rangle + \langle p, g \rangle = \langle p, g \rangle \le \| g \| ( \mathcal{E} + 1).
$$ 
In the second equality we have used the fact that $S^* = S^{-1}$ when regarded as an operator on $\ell^2$.  In the last inequality we have used the Cauchy-Schwartz inequality as well as the fact that $x < x^2 +1$ to replace $\|p\|$ with $\mathcal{E} + 1$.  From Gronwall's inequality and the smallness condition on $g$ we obtain
$$ \mathcal{E}(t) \le e^{C_0C_1}\mathcal{E}(0) + e^{C_0C_1} - 1 \qquad 0 < t < C_1\eps^{-1 - \eta_0} $$ as desired.

Now consider the weighted norm.  Let $\mathcal{E}_\alpha(t) := \| r(t) \|_\alpha^2 + \| p(t) \|_\alpha^2$ and let $\langle x, y \rangle_\alpha := \sum_{k \in \Z} e^{\alpha a (k - c_\alpha t)} x_k y_k$ so that $\| x \|_\alpha^2 = \langle x, x \rangle_\alpha$.  Compute
$$ \ba{lll} 
\frac{d}{dt} \mathcal{E}_\alpha(t) & = & - 2\alpha ac_\alpha \mathcal{E}_\alpha + \langle (S^* - S^{-1})r, p \rangle_\alpha + \langle p, g \rangle_\alpha \\ \\
& \le & \left( e^a(1 - e^{-2a}) - 2a\alpha c_\alpha \right)\mathcal{E}_a + \| g \|_\alpha( \mathcal{E}_\alpha + 1) \\ \\
& \le & 2C_0 \eps^{1 + \eta_0} (\mathcal{E}_\alpha + 1) \qquad \mbox{ for } \qquad 0 < t < C_1\eps^{-1 - \eta_0}.
\ea
$$
In the second line we have used the fact that $S^* =e^{-2a}S^{-1}$ and $\|S^{-1}\| = e^a$ in the space $\ell^2_a$ to bound the first term and we have used the Cauchy-Schwartz inequality together with the fact that $x \le x^2 + 1$ to bound the second term.  In the third line we have used the fact that $\alpha c_\alpha - 1 \sim \eps^2$ and $a \sim \eps$ to show that the first term is higher order in $\eps$ than the second term and thus is no bigger than the second term for sufficiently small $\eps$.  We have also used the smallness assumption on $g$ to simplify this term.  The result again follows from Gronwall's inequality.
\end{proof}

We can now prove Theorem $\ref{thm:Finite_Time_Collision}$.

\begin{proof}
The idea of the proof is to write 
$u = u_{c_+} + u_{c_-} + v_2 + v_1$ where $v_2$ is localized and serves to cancel the leading order terms in $\frac{d}{dt} \| v_1 \|^2 $.  As a preliminary step we study the following system of equations:
$$ \left\{ \ba{l} \dot{\phi}_1 = (S - I)\phi_2 \\ \\ \dot{\phi}_2 = (I - S^{-1})\left[ \phi_1 + 2\eps^{-7/2 + \eta_0} r_{c_+}r_{c_-} \right] \ea \right.
$$
where $r_{c_+}$ and $r_{c_-}$ are solitary wave solutions of the FPU equation evaluated at $k - c_+ t - T_0$ and $k + c_+ t + T_0$ respectively.
Our aim is to apply Lemma $\ref{lem:dWE}$ to show that $\phi$ remains small on timescales of order $\eps^{-1 - \eta_0}$.
To that end, we estimate 

$$ \ba{lll}
\eps^{\eta_0 - 7/2} \| (I - S^{-1})r_{c_+} r_{c_-} \|_\alpha 
& = &
\eps^{\eta_0 - 7/2} \| r_{c_+}(I-S^{-1})r_{c_-} + (S^{-1}r_{c_-})(I - S^{-1})r_{c_+}  \|_\alpha \\ \\
& \le &  
C\eps^{\eta_0 -1/2} \| r_{c_\alpha}\|_\alpha  + C\eps^{\eta_0 - 3/2} \|(I - S^{-1})r_{c_\alpha}\|_\alpha
\le 
C \eps^{1 + \eta_0}. \ea
$$

In the first inequality in the second line we have used that $\| (I - S^{-1})r_{c_{-\alpha}} \|_{\ell^\infty} \le C\eps^3$ and that $\| r_{c_{-\alpha}} \|_{\ell^\infty} \le C\eps^2$, which follow from $\eqref{eq:KdV_approx}$ and the Mean Value Theorem.  In the second inequality we have used that $\| r_{c_\alpha} \|_\alpha \le C\eps^{3/2}$ and that $\| (I - S^{-1}) r_{c_\alpha} \|_\alpha \le C\eps^{5/2}$ which also follow from $\eqref{eq:KdV_approx}$ and are again established in Lemma $\ref{lem:small_eps_2}$ .  We remark that (at least for $t < T_0$), $\| r_{c_{-\alpha}} \|_\alpha$ is exponentially large, thus it is essential that we are able to use the $\ell^\infty$ norm rather than the $\| \cdot \|_\alpha$ norm to estimate $r_{c_{-\alpha}}$.  The bound for the $\ell^2$ norm is similar.

We consider now the system
$$ \left\{ \ba{l} \dot{\psi}_1 = (S - I) \psi_2 \\ \\ \dot{\psi}_2 = (I - S^{-1})(\psi_1 + 2\eps^{-1 + \eta_0}((r_{c_+} + r_{c_-})\phi_1) \ea \right.$$
We again use Lemma $\ref{lem:dWE}$ to bound both the $\ell^2$ and weighted norms.  Here it suffices to use the fact that $\| \phi_1 \| + \|\phi_1\|_+ + \|\phi_1\|_- < C$ and $\|r_{c_+} + r_{c_-}\|_{\ell^\infty} \le C\eps^2$ to bound the inhomogeneous term.

Now let $v_2 := \eps^{7/2 - \eta_0} \phi + \eps^{9/2 - 2\eta_0} \psi$.   
We define the residual $v_1 = (R,P)$ by the ansatz
$$\ba{l} 
r = r_{c_+} + r_{c_-} + \eps^{7/2 - \eta_0} \phi_1 + \eps^{9/2 - 2 \eta_0}\psi_1 + \eps^{11/2 - 3 \eta_0} R \\ \\ 
p = p_{c_+} + p_{c_-} + \eps^{7/2 - \eta_0} \phi_2 + \eps^{9/2 - 2 \eta_0}\psi_2 + \eps^{11/2 - 3 \eta_0} P \ea $$ where $\phi$ and $\psi$ are as above.

Upon substituting the above ansatz into $\eqref{FPU}$ we see that $R$ and $P$ satisfy the differential equation
$$\left\{ \ba{lll} 
\dot{R} & = & (S - I)P \\ \\ 
\dot{P} & = & (I - S^{-1})R + g \ea \right.$$
where the inhomogeneous term is 
$$\ba{lllr}
g & = & (I - S^{-1}\left[ (r_{c_+} + r_{c_-})(\eps^{\eta_0 - 1} \psi_1 + R)\right] & (i) \\ \\
& & + \eps^{3/2 + \eta_0}(I - S^{-1}) \left[ (\phi_1 + \eps^{1 - \eta_0}\psi_1 + \eps^{2 - 2\eta_0} R)^2\right]  & (ii) \\ \\
& & + 3\eps^{-11/2 + 3\eta_0} (r_{c_+} + r_{c_-})\left[ (I - S^{-1})(r_{c_+}r_{c_-}) \right] + S^{-1}(r_{c_+}r_{c_-})(I - S^{-1})(r_{c_+} + r_{c_-}) \rangle & (iii) \\ \\
& & +\eps^{2\eta_0 - 2}(I - S^{-1})\left[ (\phi_1 + \eps^{1-\eta_0} \psi_1 + \eps^{2-2\eta_0} R) F_3(r_{c_+},r_{c_-},\eps^{7/2 - \eta_0}\phi_1, \eps^{9/2 - 2\eta_0}\psi_1,\eps^{11/2 - 3\eta_0}R)\right] & (iv) \\ \\ 
& & + \eps^{3\eta_0 - 11/2} (I - S^{-1})F_4(r_{c_+},r_{c_-},\eps^{7/2 - \eta_0}\phi_1, \eps^{9/2 - 2\eta_0} \psi_1, \eps^{11/2 - 3\eta_0} R) & (v). 
 \ea $$

Here $F_3$ is a homogeneous polynomial of degree $2$ in its arguments and accounts for the third order terms in the Taylor expansion of $V'$ about $0$ other than those which arise from $(r_{c_+} + r_{c_-})^3$.  The function $F_4$ accounts for the fourth and higher order terms in the Taylor expansion of $V'$ about $0$ and is quartic in all of its arguments.

From fact that $\|r_{c_\alpha}\|_{\ell^\infty} \le C\eps^2$ we see that $(i) \le C \eps^{1 + \eta_0}$ so long as $\|\psi_1\|$ remains $\mathcal{O}(1)$ and $\|R \|$ remains $\mathcal{O}(\eps^{\eta_0 - 1})$.  Similarly $(ii) \le C\eps^{3/2 + \eta_0}$ so long as $\|\phi\|$ remains $\mathcal{O}(1)$, $\|\psi\|$ remains $\mathcal{O}(\eps^{\eta_0 - 1})$, and $\|R\|$ remains $\mathcal{O}(\eps^{2\eta_0 - 2})$.  To bound $(iii)$ we use the fact that $\| r_{c_\alpha} \|_{\ell^\infty} \le C\eps^2$, that $\|(I - S^{-1})(r_{c_+}r_{c_-})\|_{\ell^\infty} \le C\eps^{9/2}$, that $\| S^{-1} (r_{c_+} r_{c_-}) \| \le \|r_{c_+}\|_{\ell^\infty} \|r_{c_-}\| \le C\eps^{7/2}$ and that $\|(I - S^{-1})r_{c_\alpha}\| \le C\eps^3$ to conclude $(iii) \le C \eps^{1 + 3\eta_0}$.

Since $F_3$ is quadratic, we have $(iv) \le C\eps^{2 + 2\eta_0}$ so long as its arguments remain $\mathcal{O}(\eps^2)$ and the coefficient remains $\mathcal{O}(1)$.  That is, we have $(iv) \le C\eps^{2 + 2 \eta_0}$ so long as $\phi$ remains $\mathcal{O}(1)$, $\psi$ remains $\mathcal{O}(\eps^{\eta_0 - 1})$ and $R$ remains $\mathcal{O}(\eps^{2 - 2\eta_0})$.  We similarly conclude that $(v) \le \eps^{2 + 3\eta_0}$ so long as $\|\phi_1\|$ remains $\mathcal{O}(\eps^{\eta_0 - 3/2})$, $\|\psi_1\|$ remains $\mathcal{O}(\eps^{\eta_0 - 5/2})$, and $\|R\|$ remains $\mathcal{O}(\eps^{\eta_0 - 7/2})$.  Note that we have only had to assume that $\| R \|$ remains $\mathcal{O}(\eps^{\eta_0 - 1})$ on timescales $t \sim \eps^{-1 - \eta_0}$.  Thus from Lemma $\ref{lem:dWE}$ it follows that $\|R\|^2 + \| P \|^2$ actually remains $\mathcal{O}(1)$ on timescales $t \sim \eps^{-1 - \eta_0}$.  

To conclude the proof it remains only to show that if the initial perturbation is localized, it remains so for the timescales of interest.
To that end, redefine the residuals $R$ and $P$ by the ansatz
$$r = r_{c_+} + r_{c_-} + \eps^{7/2 - \eta_0}R \qquad p = p_{c_+} + p_{c_-} + \eps^{7/2 -\eta_0}P $$
and substitute into $\eqref{FPU}$ to obtain
\[ \left\{ \ba{l} \dot{R} = (S - I)P \\ \\ \dot{P} = (I - S^{-1})R + g \ea \right.\]
where $g := \eps^{\eta_0 - 7/2} (I - S^{-1}) \left( V'(r_{c_+} + r_{c_-} + \eps^{7/2 - \eta_0}) - V'(r_{c_+}) - V'(r_{c_-})- \eps^{7/2 - \eta_0} R\right) $.
Use Taylor's Theorem to write
$V'(r_{c_+} + r_{c_-}) = V'(r_{c_-}) + V''(r_{c_-})r_{c_+} + V'''(r_{c_-})r_{c_+}^2 + V''''(\theta_1)r_{c_+}^3$ and $V'(r_{c_+} + r_{c_-} + \eps^{7/2 - \eta_0}R) = V'(r_{c_+} + r_{c_-}) + V''(r_{c_+} + r_{c_-})\eps^{7/2 - \eta_0} R + V'''(\theta_2) \eps^{7 - 2\eta_0} R^2$.
Thus
$$
\ba{lll}  g & = & \eps^{\eta_0 - 7/2} (I - S^{-1})\left( 
 (V''(r_{c_-}) - 1)r_{c_+} +
 V'''(r_{c_-})r_{c_+}^2 + 
 V''''(\theta_1)r_{c_+}^3 \right. \\ \\ 
 & &  \qquad + \left.
( V''(r_{c_+} + r_{c_-}) - 1) \eps^{7/2 -\eta_0}R + V'''(\theta_2)\eps^{7 - 2\eta_0} R^2 \right) 
\ea  $$
The quantities $\theta_i$ are bounded uniformly in both space and time, so long as $\|R\|_{\ell^\infty}$ remains bounded.  Since we are trying to prove that $R$ remains bounded, and we must assume that $R$ remains bounded in order to bound $V'''(\theta_i)$, we proceed with some care.  More precisely, assume that there are some positive constants $C_4$ and $C_1$ so that $\|R\|_{\ell^\infty} \le C_4$ for $0 < t < C_1 \eps^{-1 - \eta_0}$.  It follows that there is some positive constant $C_5$ so that $|V''''(\theta_1)| + |V'''(\theta_2)| < C_5$.
Thus
$$ \ba{lll}
\| g \|_+ & \le & \eps^{\eta_0 - 7/2}\left( \|r_{c_-} \|_{\ell^\infty} \|(I - S^{-1}) r_{c_+}\|_+
+ C\|r_{c_+}\|_{\ell^\infty} \|(I - S^{-1})r_{c_+}\|_+ + C_5\|r_{c_+}\|_{\ell^\infty}^2 \|r_{c_+}\|_+ \right) \\ \\
& & \qquad + \; C\|r_{c_+} + r_{c_-}\|_{\ell^\infty} \|R\|_+ + C_5\eps^{7/2 - \eta} \|R\|_{\ell^\infty} \|R\|_+ \\ \\
& \le & C \eps^{1+ \eta_0} + C_5 \eps^{2 - \eta_0} + C\eps^2\|R\|_+ + C_5\eps^{7/2 - \eta_0} \|R\|_{\ell^\infty} \|R\|_+.
\ea
$$
so long as $\|R\|_{\ell^\infty}$ remains bounded.  We can always choose $\eps$ small enough so that 
\begin{equation}
\| g \|_+ \le 2C\eps^{1 + \eta_0}
\label{eq:111}
\end{equation}  where $C$ is a constant which depends upon neither $\eps$ nor $R$.  Thus, so long as so long as the above is valid, i.e. $\|R(t)\|_{\ell^\infty} < C_4$ then from Lemma 4 we have $\|R(t)\|^2_+ + \|P(t)\|_+^2 \le e^{2C C_1}(\|R(0)\|^2 + \|P(0)\|^2)$ for $0 < t < C_1 \eps^{-1 - \eta_1}$.  We may now, {\it a posteriori} choose $C_4 = 2e^{2C C_1}$ and make $\eps$ smaller if necessary so that $\eqref{eq:111}$ holds.

One may obtain similar estimates for $\|g\|_-$ and $\|g\|$.  In particular
$$ \|g\|_+ + \|g\|_- + \|g\| \le C\eps^{1 + \eta_0}$$ so long as $\|R\|_+ + \|R\|_- + \|R\|$ remains $\mathcal{O}(1)$.   
This concludes the proof.
\end{proof}

We now show that Theorem $\ref{thm:Collision}$ follows from Theorems 2 and 3.

\begin{proof}[Proof of Theorem $\ref{thm:Collision}$]
The strategy for the proof is to consider three different regimes: 
\begin{enumerate}
\item The pre-interaction regime where the solitary waves are well separated and moving towards each other.
\item The interaction regime where the solitary waves are not well separated.
\item The post-interaction regime where the solitary waves have already collided and are now well separated and are moving away from each other.
\end{enumerate}

We choose initial data in the pre-interaction regime which is exponentially close (in $\ell^2$) to the sum of two well separated solitary waves.  In light of the symmetry $u(n,t) \mapsto u(-n,-t)$ for $\eqref{FPU}$ we may apply Theorem $\eqref{thm:Orbital_Stability}$ backwards in time. In particular, using $\eqref{eq:thm2_1}$ with $\delta_{loc} = \delta_{nonloc} = e^{-\eps^{-\eta_1}}$ and $\tau_+^* = -\tau_-^*= \eps^{-1-\eta_1}$ yields $\eqref{eq:thm1_1}$.  To control the evolution in the interaction regime we use Theorem $\ref{thm:Finite_Time_Collision}$.  However, note that the definition of the weighted norms change under the symmetry $u(n,t) \mapsto u(-n,-t)$; when the solitary waves are moving away from each other the weighted norms don't see mass that lies in between the main waves, whereas when the solitary waves are moving toward each other they do.  Thus to ensure that the localized error $v_{20}$ is small, we must take $\delta_{nonloc} = e^{-\eps^{-1 - \eta_0}}$ above.  Having done this, the localized error $v_{20}$ is order $\eps^{7/2-\eta_0}$ and the $\ell^2$ error $v_{10}$ is order $\eps^{11/2 - 3\eta_0}$ in the interaction regime.  In light of the symmetry $u(n,t) \mapsto u(n,t-T)$ we may again apply Theorem $\ref{thm:Orbital_Stability}$.  Using $\eqref{eq:thm2_1}$ with $\delta_{loc} = C\eps^{7/2-\eta_0}$ and $\delta_{nonloc} = C\eps^{11/2 - 3\eta_0}$ yields $\eqref{eq:thm1_2}$.  
The weighted norm estimate $\eqref{eq:thm1_3}$ follows from $\eqref{eq:thm2_2}$ and convergence of modulation parameters similarly follows from Theorem $\ref{thm:Orbital_Stability}$.

To complete the proof it remains only repeat this procedure for initial data which is exponentially close to the sum of solitary waves in the weighted space $\ell^2_a \cap \ell^2_{-a}$.  Using $\eqref{eq:thm2_3}$ with $\delta_{nonloc} = 0$, $\delta_{loc} < C\eps^{7/2 +\eta}$ and $\tau_+^* = -\tau_-^*= \eps^{-1-\eta_1}$ yields $\eqref{eq:thm1_4}$.  Upon applying Theorem $\ref{thm:Finite_Time_Collision}$ and using the fact that the initial perturbation is localized, we have $v_{10} = 0$ and $v_{20}$ is order $\eps^{7/2 - \eta_0}$.  
Using $\eqref{eq:thm2_3}$ with $\delta_{loc} = C\eps^{7/2-\eta_0}$ and $\delta_{nonloc} = 0$ yields $\eqref{eq:thm1_5}$. 
\end{proof}

The rest of the paper is devoted to proving Theorem $\ref{thm:Orbital_Stability}$.  Before sketching its proof, we briefly review the previous stability results \cite{PW,PF4,Mizumachi} on which we build.  The KdV equation is
 \begin{equation}
u_t = u_{xxx} + 3(u^2)_x. \label{eq:KdV}
\end{equation}
Traveling waves $u(x,t) = Q_c(x - ct)$ satisfy the wave profile equation $cQ + Q'' + 3Q^2$ which can be solved explicity to obtain $Q_c(\xi) = c \; \mathrm{sech}^2 (\sqrt{c} \xi/2)$.  These solitary waves $Q_c(\cdot - \tau)$ form a $2$-manifold in the space of initial data, parameterized by allowing the speed (or mass) $c$ and the phase $\tau$ to vary independently.  To study perturbations of solitary wave solutions, one makes the ansatz $u(x,t) = Q_{c(t)}(x - \tau(t)) + w(x,t)$.  The evolution of the perturbation $w$ is governed by a dispersive PDE.  In order to control its evolution, Pego and Weinstein \cite{PW} work in an exponentially weighted space and prove that $w(t)$ tends to zero at $t \to \infty$ in this norm.  The evolution of the speed $c(t)$ and phase $\tau(t)$ are governed by ODE's which are coupled to the PDE for $w(t)$.  The control of $w$ in the weighted norm, together with energy estimates which come from the conservation of the Hamiltonian and an orthogonality condition on $w$, is sufficient to imply convergence of $\dot{\tau}(t)$ and $c(t)$ as $t \to \infty$.

In 1999 Friesecke and Pego showed that in the ``KdV'' regime of long-wavelength, small amplitude traveling waves whose speed is close to the ``sonic'' speed, which is $c_{sonic} = \pm1$ in our case, FPU solitary waves could be well-approximated by KdV solitary waves.  More precisely, they showed that if the speed $c$ is sufficiently close to $1$ (or $-1$), and if one writes $c^2 = 1+ \frac{1}{24} \eps^2 $, then the traveling wave profile satisfies the estimate
\begin{equation}\label{eq:KdV_approx}
\| \frac{1}{\eps^2} r_{c}(\frac{\cdot}{\eps}) - \phi_1(\cdot) \|_{H^1(\R) } \le C \eps^2
\end{equation}
where $\phi_1(\xi) = \frac{1}{4}{ \mathrm{sech}}^2(\xi/2)$.  This closeness between KdV and FPU solitary waves is instrumental in importing estimates for KdV linearized about a solitary wave to the FPU setting.  
 
In a series of papers from 2002 to 2004 \cite{PF2, PF3, PF4}, Pego and Friesecke showed that these waves are stable with respect to perturbations which are small in both $\ell^2$ and a weighted space.  Their method of proof is adapted from \cite{PW} and indeed relies heavily on results presented there.  This series of papers provides a blueprint for importing estimates which are known for the KdV equation to equations which have KdV as a long-wave low-amplitude scaling limit.  However this work, as well as that of \cite{PW}, is limited to perturbations which lie in a weighted space and thus says nothing about the behavior of solutions which differ from a solitary wave by a perturbation which decays very slowly.

Orbital stability in the energy space for gKdV solitons was obtained by Martel and Merle in \cite{MM1}.  The conceptually simplest version of the argument relies on a virial identity for the perturbation equation.  That is, small perturbations to the solitary wave never linger near the solitary wave - regardless of whether they are exponentially localized or not.  Since evolution of the modulation parameters $c$ and $\tau$ is controlled by the mass of the perturbation near the solitary wave, one may use this estimate to control the evolution of $c$ and $\tau$, and thus use energy estimates to obtain Lyapunov stability for the solitary wave.

More recently, Mizumachi \cite{Mizumachi} has obtained orbital stability in $\ell^2$ for solitary waves of $\eqref{FPU}$ in the KdV regime via an adaptation of \cite{PF2} which uses ideas present in \cite{MM1}.  Unfortunately we know of no virial identity for the evolution of perturbations to solitary waves in FPU.  However, it is not hard to show that exact solutions of FPU with small initial data move more slowly than larger solitary waves.  Mizumachi decomposes the perturbation into two pieces, one of which is an exact solution and the other of which is exponentially localized \cite{Mizumachi}.  With this decomposition one can show that FPU solitary waves are stable to all sufficiently small perturbations in $\ell^2$ even if they are not exponentially localized.

With this background in mind, we are now ready to outline our proof.
 We first make the ansatz 
\begin{equation} u(t) = u_{c_+(t)}(\cdot - \tau_+(t)) + u_{c_-(t)}(\cdot - \tau_-(t)) + v(t).
\label{eq:ansatz1}
\end{equation}
Here we allow the modulation parameters $c_\alpha$ and $\tau_\alpha$ for $\alpha \in \{+,-\}$ to vary in order to control the neutral modes associated with variation in wavespeed and phase.  This control is achieved via orthogonality conditions which we place on the perturbation $v$.

It is a consequence of the convexity (and conservation) of the Hamiltonian that so long as these orthogonality conditions are satisfied, we obtain 
\begin{equation} \| v(t) \|^2 \le C\left( \|v(t_0)\|^2 + \eps |c_+(t) - c_+(t_0)| + \eps |c_-(t) - c_-(t_0)| \right) + {\tt exp}. \label{eq:Intro_energy}
\end{equation}
Here ${\tt exp}$ denotes the exponentially small terms generated by the interaction between the tails of $u_{c_+}$ and $u_{c_-}$.  Thus to obtain orbital stability it suffices to restrict attention to the region where the interaction terms are small and to control the evolution of the modulation parameters $c_\alpha$.   

Substituting the ansatz $\eqref{eq:ansatz1}$ into $\eqref{FPU}$ we obtain a lattice differential equation for the perturbation $v$ which is coupled to a system of ordinary differential equations for the modulation parameters $c_\alpha$ and $\tau_\alpha$.  If one localizes the perturbation to the left- and right- half lattices by writing $v = v_+ + v_-$ then the linear part of the lattice differential equation for $v_\alpha$ generates a semigroup which was studied in \cite{PF4} and is known to decay exponentially for initial data which are exponentially localized and orthogonal to the neutral modes.  Given arbitrary initial data $v \in \ell^2$, we follow   \cite{Mizumachi} in decomposing it as $v = v_1 + v_2$ where $v_1$ is an exact solution of $\eqref{FPU}$ and $v_2$ is exponentially localized.  We may now further localize $v_2$ by writing $v_2 = v_{2,+} + v_{2,-}$ with $v_{2,\alpha}$ near $u_{c_\alpha}$.  Now $v_{2,\alpha}$ lives in the space on which the semigroup generated by the linear part of its evolution equation decays.  Thus we may write a variation of constants formula for $v_{2,\alpha}$.  However, this equation is now coupled to the exact solution part of the perturbation $v_1$ as well as the modulation parameters $c_\alpha$ and $\tau_\alpha$ and cross terms associated to $u_{c_{-\alpha}}$.  To give a sense for the relationship between the quantities involved, we write the variation of constants formula here: 

$$ \| v_{2,\alpha}(t) \|_\alpha  \le  Ce^{-b(t-t_0)}(\|v_{2,\alpha}(t_0)\|_\alpha + \|v_{1,\alpha}(t_0)\|_{W_\alpha}) +  C\| v_{1,\alpha}(t) \|_{W_\alpha} + C \int_{t_0}^t e^{-b(t-s)} \|G_\alpha (s)\|_\alpha ds$$ 
where
\[ \ba{lll} \|G_\alpha(s)\| & \le & 
\eps^2 \| v_{1,\alpha} (s) \|_{W_\alpha} +  \eps^{-1/2} |\dot{c}_\alpha(s)|  \eps^{5/2} (\dot{\tau}_\alpha - c_\alpha) 
\\ \\ & & + (|\dot{\tau}_\alpha(s) - c_\alpha(s)| + |c_\alpha(s) - c_\alpha(t_0)| + \|v(s)\| + \eps^4 )\|v_{2,\alpha}(s)\|_\alpha + {\tt exp} \ea 
\]
Here the two-sided slowly decaying weighted norm $\| \cdot \|_{W_\alpha}$ is given by
\begin{equation}
 \| x \|_{W_\alpha(t)}^2 := \sum_{k \in \Z} e^{-2\bar{a}|k - \tau_\alpha(t)|} x_k^2,
 \label{eq:Wdef}
 \end{equation}
and $\bar{a} \sim \eps^2$.  The reason for this choice is made clear in the proof of Lemma $\ref{Lem:Virial}$.  

As is often done in stability analysis for PDEs, we first restrict attention to solutions for which $G_\alpha$ above is small.  So long as this holds, after some computation and applications of Gronwall's inequality and Young's inequality, we obtain 
\[ \int_{t_0}^{t_1} \| v_{2,\alpha}(t) \|_\alpha^2 dt \le C \eps^{-2} \int_{t_0}^{t_1} \|v_{1,\alpha}(t)\|_{W_\alpha}^2 dt + C \eps^{-3}(\|v_{1,\alpha}(t_0)\|_{W_\alpha}^2 + \|v_{2,\alpha}(t_0)\|_\alpha^2) + {\tt exp} \]
Since we have followed Mizumachi and chosen $v_1$ to be an exact solution of $\eqref{FPU}$ we may use the virial identity $\eqref{eq:virial}$ to see that $$\int_{t_0}^t \| v_{1,\alpha} (s) \|_{W_\alpha}^2 \le C \eps^{-4} \| v_1(t_0) \|^2 + {\tt exp}.$$
Through a separate analysis, which also mimics \cite{PW, PF2, Mizumachi}, we obtain 
$$\dot{c}_\alpha  \le C(\eps^2 + \|v_{1,\alpha} \| + \|v_{2,\alpha} \|) (\|v_{1,\alpha}\|_{W_\alpha}^2 + \|v_{2,\alpha}\|_\alpha^2) + {\tt exp}.$$ 
and upon integrating this equation and using the above, we see that $|c_\alpha(t) - c_\alpha(t_0)|$ remains small as long as $G_\alpha$ does.  We may now conclude {\it a posteriori} that $G_\alpha$ and hence $|c_\alpha(t) - c_\alpha(t_0)|$ remains small for all time.  In particular, we may use $\eqref{eq:Intro_energy}$ to conclude that $\|v(t)\|$ remains small for all time.    

Note that large negative powers of $\eps$ appear in several of the estimates above.  These large coefficients arise because in the small amplitude long wavelength regime, solitary waves move only slightly faster than smaller amplitude radiation, which disperses at the sonic speed $c = 1$.  Thus, the weighted norms associated with these solitary waves decay very slowly, hence have very large integrals with respect to time.  Given that the large coefficients are present, controlling the above quantities requires some care.

\section{Coordinates for the FPU Flow Near the Sum of Two Solitary Waves}

As discussed in \cite{PF1, PF2} there is a $2$ dimensional manifold of wave states parameterized by wavespeed $c$ and phase $\tau$.  Define the tangent vectors $\xi_{1}(\tau,c) := \partial_x u_{c}(\cdot - \tau)$ and $\xi_{2}(\tau,c) = \partial_c u_c(\cdot-\tau)$ so that $\xi_1(\tau,c)$ and $\xi_2(\tau,c)$ span the tangent  space of the manifold of wave states at the point $u_c(\cdot - \tau)$.  When we evaluate $\xi_i$ at $(\tau_\alpha,c_\alpha)$ we suppress the argument and refer to it as $\xi_{i,\alpha}$. 

In the spirit of \cite{PF2} we define the bilinear forms 
$$\omega_+(x,y) = \sum_{n \in \Z} \left[ y_{1,n}\sum_{k=-\infty}^n x_{2,k} + y_{2,n}\sum_{k=-\infty}^{n-1}x_{1,k}\right] $$
and
$$\omega_-(x,y) = - \sum_{n \in \Z} \left[y_{1,n} \sum_{k=n+1}^\infty x_{2,k} + y_{2,n} \sum_{k=n}^\infty x_{1,k}\right]$$
Observe that $\omega_+(x,y) - \omega_-(x,y) = \sum_{n \in \Z}\sum_{k \in \Z} (y_{1,n} x_{2,k} + y_{2,n}x_{1,k}$) so that whenever either $x$ or $y$ is zero mean, the forms $\omega_+(x,y)$ and $\omega_-(x,y)$ agree.  Also, when either $x$ or $y$ is zero mean, the forms have the anti-symmetry property
\begin{equation} \omega_\alpha(x,y) = -\omega_\alpha(y,x) \qquad . \label{sym1} \end{equation}
For this reason we refer to $\omega_\alpha$ as a symplectic form even though $\eqref{sym1}$ does not hold in general.  A particular case when $x$ is zero mean is when $x$ is in the range of $J$, thus  $\omega_\alpha(Jz,y) = \langle z,y \rangle$.
The reason that we take care to define two separate forms is because we will have occasion to apply them to sequences which needn't be zero mean such as $\xi_{2,\alpha}$.  The key difference between $\omega_+$ and $\omega_-$ is appears in how they act on the weighted spaces $\ell^2_a$ and $\ell^2_{-a}$.  

When $a > 0$ the form $\omega_+$ is continuous when regarded as a map from $\ell^2_{-a} \times \ell^2_a$ to $\R$ but not when regarded as map from $\ell^2_a \times \ell^2_{-a} \to \R$.  Similarly, $\omega_-$ is continuous when regarded as a map from $\ell^2_{a} \times \ell^2_{-a}$ to $\R$, but not when regarded as a map from $\ell^2_{-a} \times \ell^2_a$ to $\R$.   Both $\omega_+$ and $\omega_-$ are only defined on a dense subset of $\ell^2 \times \ell^2 \to \R$ and neither are continuous with respect to the $\ell^2 \times \ell^2$ topology.  The following useful inequality is an immediate consequence of the Cauchy-Schwartz inequality for the $\ell^2_a$, $\ell^2_{-a}$ dual pairing.
\begin{equation}
\omega_\alpha(x, y) \le \frac{1}{1-e^{-a}} \| x \|_{\ell^2_{ - \alpha a}} \| y \|_{\ell^2_{\alpha a}} \sim a^{-1}\| x \|_{\ell^2_{ - \alpha a}} \| y \|_{\ell^2_{\alpha a}}
\label{eq:OmegaBdd}
\end{equation}

In the tradition of Pego and Weinstein \cite{PW}, Pego and Friesecke \cite{PF2} , and Mizumachi \cite{Mizumachi} we impose orthogonality conditions on the perturbation of our ansatz.  We seek to study solutions $u$ of $\eqref{FPU}$ which satisfy 
\begin{equation}
u(k,t) = u_{c_+(t)}(k-\tau_+(t)) + u_{c_-(t)}(k - \tau_-(t)) + v_1(k,t) + v_2(k,t)
\label{eq:Ansatz}
\end{equation}
where $v_1(k,t)$ also satisfies $\eqref{FPU}$.  Let $h_+(k) = \chi_{[0,\infty)}(k)$ be the Heaviside function and $h_-(k) = 1 - h_+(k)$.  Denote $v_{i,\alpha} = v_i h_\alpha$ and $v = v_1 + v_2$.  The orthogonality conditions that we impose on $v_{2,\alpha}$ are 
\begin{eqnarray}
\omega_\alpha(\xi_{1,\alpha},v_\alpha) = 0 \qquad \alpha \in \{+,-\} \label{Perp1}
\\ 
\omega_\alpha(\xi_{2,\alpha},v_{2,\alpha}) = 0 \qquad \alpha \in \{+,-\}.
\label{Perp2}
\end{eqnarray}

\begin{rmk}
Here condition $\eqref{Perp1}$ is imposed on the full perturbation $v$ while condition $\eqref{Perp2}$ is imposed only on $v_2$.  Pego and Friesecke do not decompose the perturbation and require $\omega(\xi_i,v) = 0$.  Mizumachi introduced the analogue of $\eqref{Perp2}$ (for a single wave) which allowed him to control the modulation equation for $c_\alpha$ well enough to obtain orbital stability.
\end{rmk}

Substituting $\eqref{eq:Ansatz}$ into $\eqref{FPU}$ we obtain an evolution equation for $v_{2,\alpha}$.

\begin{equation}
L_\alpha v_{2,\alpha} = (Jg_\alpha)h_\alpha + \tilde{\ell}_\alpha \label{eq:perturb}
\end{equation}
where
\begin{equation}
L_\alpha := \partial_t - JH''(u_{c_\alpha}) \label{eq:Ldef},
\end{equation}
\begin{equation}
\tilde{\ell}_\alpha := \left[\sum_{\beta \in \{+,-\}} \dot{c}_\beta \xi_{2,\beta} - \left(\dot{\tau}_\beta - c_\beta\right) \xi_{1,\beta} \right] h_\alpha \label{eq:ldef}, \mbox{ and } 
\end{equation}
\begin{equation}
g_\alpha := H'(u_{c_+} + u_{c_-} + v_1 + v_2) - H'(v_1) - H'(u_{c_+}) - H'(u_{c_-}) - H''(u_{c_\alpha})v_{2,\alpha}.
\label{eq:gdef}
\end{equation}

Our aim going forward is to analyze the evolution of the quantites $v_2$, $v_1$, $c$ and $\tau$ rather than our solution $u$.  Before we may do this in earnest, we must show that the two are equivalent; i.e. that $(\tau,c,v_1,v_2)$ form a coordinate system for $u$.  Moreover, we must show that these coordinates are valid even if $v_1$ and $v_2$ are as large as the radiation due to collision could be, i.e. as large as the error terms in Theorem $\ref{thm:Finite_Time_Collision}$.

\subsection{Estimates in the KdV regime}

In Theorem $\ref{thm:Orbital_Stability}$ we are careful to demonstrate how the size of both an admissible initial perturbation $\delta$ and the evolution of that perturbation depend on $\eps$.  This is necessary so that we may establish that error incurred from the finite-time approximation yields an initial condition which is not too far from the sum of solitary waves and moreover remains small compared to the main waves.  This requires an understanding of how each constant which appears in \cite{PF4,Mizumachi} depends upon $\eps$.  In this section, we establish small $\eps$ asymptotics for the higher derivatives of the wave profile with respect to $c$ and $\tau$.  Some of the details of the computations appear in the Appendix A.1.  

The small $\eps$ asymptotics for the first derivative of the wave profile are established in \cite{PF1}.  In this section, we refine those estimates.  Upon substituting a wave profile $r(t,k) = r_c(k-ct)$ into the second order differential equation for $r$, $\ddot{r}_k = (S + S^{-1} - 2I)V'(r_k)$, one obtains the differential difference equation $r_c''(\xi) = (T + T^{-1} - 2I) V'(r_c)$ where the translation operator $T$ is given by $(Tx)(\xi) = x(\xi + 1)$.  Taking the Fourier transform one obtains the fixed point equation 
$r_c = P_cN(r_c)$.  Here $N(r) := V'(r) - r = \frac{1}{2}r^2(1 + \eta(r))$ with the above equation serving as the definition of both $N$ and $\eta$.  The operator $P$ is a pseudodifferential operator with symbol $p_c(\xi) = \frac{\sinc^2(\xi/2)}{c^2 - \sinc^2(\xi/2)}$.  One can now make the scaling $c = 1 + \frac{\eps^2}{12}\beta$\footnote{If $c^2 = \frac{\eps^2 \beta'}{12}$, then $\beta' = \beta + 12\eps^2(\frac{\beta}{24})^2$, thus in the small $\eps$ regime the scalings $c^2 = 1 + \frac{\eps^2}{12}\beta$ and $\pm c = 1 + \frac{\eps^2 \beta}{24}$ are equivalent to leading order in $\eps$.} and the renormalization $r_c(\cdot) \mapsto \phi^{(\eps,\beta)} := \eps^2 r_c(\frac{\cdot}{\eps})$ to obtain a sequence of fixed point equations
$\phi^{(\eps,\beta)} = P^{\eps} N^{\eps} (\phi^{(\eps,\beta)})$.  Here $P^{(\eps,\beta)}$ has the symbol $p^{(\eps)}(\xi) = \frac{\eps^2 \sinc^2 (\eps \xi/2) }{1 + \frac{\eps^2}{12} \beta - \sinc^2(\eps \xi/2)}$ and $N^\eps(\phi) := \eps^{-4}N(\eps^2 \phi) = \frac{1}{2}\phi^2(1 + \eta(\eps^2 \phi))$.  Note that whenever $p$ appears with a superscript, it denotes the symbol of a pseudodifferential operator whereas $p$ with a subscript is used to denote the momentum component of a solution to $\eqref{FPU}$.

One may now take the limit as $\eps \to 0$ and recover the fixed point equation for the KdV wave profile $\phi_\beta = P^{(0)} N^{(0)} (\phi_\beta)$ where $P^{(0)}$ has symbol $p^{(0)}(\xi) = \frac{12}{\xi^2 + \beta}$ and $N^{(0)}(\phi) = \frac{1}{2} \phi^2$.  The crux of the argument in \cite{PF1} is that $p^\eps \to p^0$ uniformly on $\R$.  It is then an elementary result in the theory of pseudodifferential operators that $P^{(\eps,\beta)} \to P$ in the operator norm.  This (together with the invertibility of $(I - P^0 N^0(\phi_\beta))$ on a weighted space orthogonal to the neutral modes which is established from the theory of the KdV equation) is precisely what is needed to show that the fixed point equation $\phi^{(\eps,\beta)} = P^{(\eps,\beta)} N^\eps (\phi^{(\eps,\beta)})$ can be solved via a contraction mapping argument.

In this paper, the collision problem requires precise small $\eps$ asymptotics for $\phi^{(\eps,\beta)}$ and its derivatives in the weighted space $H^1_a$.  We accomplish this by regarding the fixed point equation $\phi^{(\eps,\beta)} = P^{(\eps,\beta)} N^\eps(\phi^{(\eps,\beta)})$ in the space $E^3_a$ of even functions whose derivatives up to order $3$ are square integrable after multiplication by the weight $e^{ax}$.  To solve this equation via contraction mapping arguments we must now obtain small $\eps$ asymptotics for $P^{(\eps,\beta)} - P^0$ as well as $(I - P^0 N^0(\phi_\beta))^{-1}$, regarded as operators in the weighted space.  To that end, we present the following lemma which is proven in Appendix A.1.

\begin{lemma} \label{lem:small_eps_1}
There are constants $C > 0$ and $a > 0$ such that for $\eps$ sufficiently small and any $s \ge 0$ the following hold
\begin{enumerate}
\item $\| P^{(\eps,\beta)} - P^{(0)} \|_{\mathcal{L}(H^s_a)} \le C \eps^2$,
\item $\| \partial_\beta^k P^{(\eps,\beta)} - \partial_\beta^k P^{(0)} \|_{\mathcal{L}(H^s_a)} \le C \eps^2$
\item $\| \partial^k N^\eps (x) - \partial^k N^0(y) \|_{\mathcal{L}(H^s_a)} \le \| x - y\|_{H^s_a} + K\eps^2$ for $k = 0,1,2$,
\item $ \| \left[I - P^{(0)} \partial N^{(0)} (\phi_\beta) \right]^{-1} \|_{\mathcal{L}(E^s_a)} \le C$,

\end{enumerate}
\end{lemma}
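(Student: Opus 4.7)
The plan is to treat each item in turn: (1) and (2) are pure Fourier-symbol bounds, (3) is a Nemitskii perturbation estimate, and (4) is a Fredholm argument built on known KdV spectral theory. Items (1)--(3) amount to careful bookkeeping of the $\eps$-dependence in well-understood calculations, while (4) is the only place where genuine spectral theory enters.

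For (1) and (2) I would work on the Fourier side. Since $\|f\|_{H^s_a} \simeq \|e^{ax}f\|_{H^s}$, a Fourier multiplier with symbol $m$ acts on $H^s_a$ as the shifted multiplier $z \mapsto m(z - ia)$ on $H^s$, provided $m$ extends analytically to a strip of half-width $a$. For $a$ sufficiently small both $p^{(0)}$ and $p^{(\eps,\beta)}$ admit such extensions, since the poles of $p^{(0)}$ sit at $z = \pm i\sqrt{\beta}$ and those of $p^{(\eps,\beta)}$ are an $O(\eps^2)$ perturbation thereof. Using $\sinc^2(\eps z/2) = 1 - \eps^2 z^2/12 + O(\eps^4 z^4)$, I would write
\begin{equation*}
p^{(\eps,\beta)}(z) - p^{(0)}(z) = \frac{\eps^2 \, Q(\eps z,\beta)}{(z^2+\beta)\bigl(z^2+\beta + \eps^2 R(\eps z,\beta)\bigr)},
\end{equation*}
with $Q,R$ entire of polynomial growth, and verify the pointwise bound $|p^{(\eps,\beta)}(z) - p^{(0)}(z)| \le C\eps^2 \langle z\rangle^{-2}$ on the contour $\mathrm{Im}\,z = -a$ by splitting $|z| \lesssim \eps^{-1}$ (Taylor regime, where the denominator is comparable to $(z^2+\beta)^2$) from $|z| \gtrsim \eps^{-1}$ (where both symbols are individually $O(\eps^2)$ because $\sinc^2$ is itself $O(1/(\eps z)^2)$). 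The decaying factor $\langle z\rangle^{-2}$ yields multiplier boundedness on $H^s$ for every $s \ge 0$. Each $\partial_\beta$ applied to the expression above introduces at most one additional $(z^2+\beta)^{-1}$ and preserves the $O(\eps^2)$ estimate, giving (2).

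Item (3) reduces to the identity $N^\eps(\phi) - N^0(\phi) = \tfrac12 \phi^2 \, \eta(\eps^2 \phi)$ with $\eta$ smooth and $\eta(0) = 0$, together with the fact that $H^s_a$ for $s \ge 1$ is a Banach algebra embedded in $L^\infty$. Splitting
\begin{equation*}
\partial^k N^\eps(x) - \partial^k N^0(y) = \bigl[\partial^k N^\eps(x) - \partial^k N^\eps(y)\bigr] + \bigl[\partial^k N^\eps(y) - \partial^k N^0(y)\bigr],
\end{equation*}
the first bracket is controlled by the Lipschitz constant of $\partial^k N^\eps$ on a neighborhood of $\phi_\beta$ (uniformly in $\eps$), and the second is $O(\eps^2)$ from the explicit $\eps^2$ appearing in $\eta(\eps^2 y)$ and its derivatives.

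The main obstacle is (4). The operator $I - P^{(0)}\partial N^{(0)}(\phi_\beta) = I - P^{(0)} M_{\phi_\beta}$ is precisely the linearization of the KdV profile fixed-point equation at $\phi_\beta$. Translation invariance of $P^{(0)}$ and $N^{(0)}$ yields $\partial_\xi \phi_\beta \in \ker(I - P^{(0)} M_{\phi_\beta})$, and the Pego--Weinstein spectral analysis of linearized KdV shows this one-dimensional kernel is the entire kernel on the exponentially weighted Sobolev space. Since $\partial_\xi \phi_\beta$ is odd, the restriction to the even subspace $E^s_a$ has trivial kernel. Because $\phi_\beta$ decays faster than $e^{-a|\xi|}$ (so $M_{\phi_\beta}$ maps $H^s_a$ compactly into itself, owing to the two-derivative smoothing by $P^{(0)}$), the operator $P^{(0)} M_{\phi_\beta}$ is compact on $H^s_a$, and $I - P^{(0)} M_{\phi_\beta}$ is Fredholm of index zero on $E^s_a$. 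Triviality of the kernel then upgrades to bounded invertibility with a constant depending on $\beta$ and $a$ but independent of $\eps$, as claimed.
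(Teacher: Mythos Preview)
Your proposal is correct in substance and close in spirit to the paper, but two points of departure are worth flagging.

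For item (1), the paper does not carry out a direct low/high-frequency split. Instead it invokes the residue decomposition of Friesecke--Pego: writing the unscaled symbol as $p = p_* + (p - p_*)$ where $p_*$ captures the poles at $\pm i\kappa$, quoting the uniform bound $|p(\xi) - p_*(\xi)| \le C_*(1+\xi^2)^{-1}$ on the strip from \cite{PF1}, estimating $p_* - p_0$ explicitly, and finally using the scaling identity $p^{(\eps,\beta)}(\xi) - p^{(0)}(\xi) = \eps^2\bigl(p(\eps\xi) - p_0(\eps\xi)\bigr)$ to extract the $\eps^2$. Your direct approach works too, but your displayed formula overstates the structure: the numerator is not $\eps^2 Q(\eps z,\beta)$ with $Q$ bounded on compacts of $\eps z$---a short computation shows it is of order $\eps^2 z^2(z^2+\beta)$, so after dividing by $(z^2+\beta)^2$ the difference is $O(\eps^2)$ but \emph{not} $O(\eps^2\langle z\rangle^{-2})$. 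This is harmless, since an $L^\infty$ bound on the symbol already gives boundedness on every $H^s$; the extra decay you claim is neither needed nor available. The residue route has the advantage that the pole structure on the complex strip is handled by citation rather than by a separate argument that the denominator of $p^{(\eps,\beta)}$ has no further zeros there.

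For item (4), your Fredholm argument is correct and in fact appears almost verbatim in the body of the paper (in the proof of Lemma~\ref{lem:small_eps_2}). The appendix proof takes a different tack: it conjugates by $e^{a\cdot}$ to reduce to unweighted $E^s$, writes the conjugated inverse as a perturbation of the $a=0$ case, and argues by continuity in $a$ that the norm is uniformly bounded on $[0,a_{\max}]$. Both routes yield the same $\eps$-independent bound; yours is more self-contained, while the paper's appendix version makes explicit that the weighted estimate is inherited from the unweighted KdV theory.
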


Here, and always, when $X$ is a Banach space, $\mathcal{L}(X)$ denotes the Banach space of bounded linear operators from $X$ to itself and $\| \cdot \|_{\mathcal{L}(X)}$ is the operator norm.  With the aid of Lemma $\ref{lem:small_eps_1}$ we may establish the leading order behavior for small $\eps$ of the higher derivatives of the wave profile with respect to $c$ and $\tau$.  

\begin{lemma} \label{lem:small_eps_2}
There is a constant $C$ which does not depend on $\eps$ such that for $\eps$ sufficiently small
\begin{equation}
\| \partial^k_\beta \phi^{(\eps,\beta)} - \partial^k_\beta \phi_\beta \|_{H^3_{a/\eps}} \le C \eps^2
\label{eq:eps^2}
\end{equation}
and
\begin{equation}
\| \partial_c^k \partial_\tau^j r_c \|_{L^2_{\eps a}} + \| \partial_c^k \partial_\tau^j p_c \|_{L^2_{a}} \le C \eps^{3/2 +j - 2k}
\label{eq:power_rules}
\end{equation}
hold with $k + j = 0,1,2,3$.
\end{lemma}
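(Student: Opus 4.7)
The plan is to bootstrap from the rescaled fixed point equation $\phi^{(\eps,\beta)} = P^{(\eps,\beta)} N^\eps(\phi^{(\eps,\beta)})$ and its KdV limit $\phi_\beta = P^{(0)} N^{(0)}(\phi_\beta)$, leveraging the four estimates in Lemma \ref{lem:small_eps_1}. The first displayed inequality of the lemma is the core of the argument; the second will follow by undoing the rescaling via $r_c(x) = \eps^2 \phi^{(\eps,\beta)}(\eps x)$ together with the chain rule $\partial_c = (12/\eps^2)\partial_\beta$.

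For the base case $k = 0$ I would subtract the two fixed point equations,
\[
\phi^{(\eps,\beta)} - \phi_\beta = \bigl(P^{(\eps,\beta)} - P^{(0)}\bigr) N^\eps(\phi^{(\eps,\beta)}) + P^{(0)}\bigl(N^\eps(\phi^{(\eps,\beta)}) - N^{(0)}(\phi_\beta)\bigr),
\]
Taylor expand the second term as $\partial N^{(0)}(\phi_\beta)(\phi^{(\eps,\beta)} - \phi_\beta) + O(\eps^2) + O(\|\phi^{(\eps,\beta)} - \phi_\beta\|^2)$ using Lemma \ref{lem:small_eps_1}(3), rearrange to put $\bigl(I - P^{(0)} \partial N^{(0)}(\phi_\beta)\bigr)(\phi^{(\eps,\beta)} - \phi_\beta)$ on the left, and apply the bounded inverse from Lemma \ref{lem:small_eps_1}(4). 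The right-hand side is $O(\eps^2)$ by Lemma \ref{lem:small_eps_1}(1) together with the uniform-in-$\eps$ boundedness of $\phi^{(\eps,\beta)}$ supplied by the Friesecke-Pego construction, and the quadratic remainder is absorbed for $\eps$ sufficiently small.

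For $k \ge 1$ one differentiates the fixed point equation $k$ times in $\beta$. At $k = 1$ this yields
\[
\bigl(I - P^{(\eps,\beta)} \partial N^\eps(\phi^{(\eps,\beta)})\bigr) \partial_\beta \phi^{(\eps,\beta)} = \bigl(\partial_\beta P^{(\eps,\beta)}\bigr) N^\eps(\phi^{(\eps,\beta)}),
\]
and an analogous identity at $\eps = 0$. By parts (1) and (3) of Lemma \ref{lem:small_eps_1} combined with the $k=0$ estimate just established, the operator on the left is an $O(\eps^2)$ perturbation of $I - P^{(0)} \partial N^{(0)}(\phi_\beta)$, hence uniformly invertible in the weighted space. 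Subtracting and applying the inverse reduces the $k=1$ bound to an $O(\eps^2)$ estimate supplied by part (2). The derivatives $k = 2,3$ follow by induction: each differentiation produces an inhomogeneity that involves only previously-controlled lower-order derivatives plus $O(\eps^2)$ symbol differences, to which the same inverse is applied. This induction simultaneously delivers the $C^3$ smoothness of $(\tau,c) \mapsto e^{2a\,\cdot}u_c(\cdot - \tau)$ promised after property ${\bf P2}$.

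Finally, the second estimate is a rescaling computation: from $\partial_c^k \partial_\tau^j r_c(x) = (12/\eps^2)^k\eps^{2+j}(\partial_\beta^k\partial_\xi^j \phi^{(\eps,\beta)})(\eps x)$ (up to sign), the substitution $y = \eps x$ in $\int_\R e^{2\eps a x}(\,\cdot\,)^2\,dx$ produces the net factor $\eps^{3+2j-4k}$, so the squared norm equals $C\eps^{3+2j-4k}\|\partial_\beta^k\partial_\xi^j \phi^{(\eps,\beta)}\|_{L^2_a}^2 \le C\eps^{3+2j-4k}$ by the first estimate. The bound for $p_c$ is handled analogously after extracting $p_c$ from the traveling-wave profile relation $-c p_c' = (I - S^{-1})V'(r_c)$ and Taylor expanding $V'$. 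I expect the main technical obstacle to be the self-referential character of the induction: uniform-in-$\eps$ invertibility of $I - P^{(\eps,\beta)} \partial N^\eps(\phi^{(\eps,\beta)})$ at each step presupposes control of the solution at the previous step, so one must thread the induction carefully to keep both the solution and the linearized operator under simultaneous control in the weighted norm.
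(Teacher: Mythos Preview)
Your proposal is correct and matches the paper's approach: both differentiate the fixed point equation in $\beta$, invert $I - P^{(\eps,\beta)}\partial N^\eps(\phi^{(\eps,\beta)})$ as an $O(\eps^2)$ perturbation of the KdV linearization (whose inverse on even functions is supplied by Lemma~\ref{lem:small_eps_1}(4)), and control the resulting inhomogeneities inductively via parts (1)--(3) of that lemma, then undo the scaling to obtain \eqref{eq:power_rules}. The only cosmetic differences are that the paper packages the $k=0$ step and the $C^3$-in-$\beta$ smoothness through an explicit appeal to the $C^3$ uniform contraction principle rather than your subtract-and-bootstrap, and for $p_c$ it uses the more elementary mean-value relation $r_c(\xi) = -c^{-1}p_c(\xi+\eta)$ (coming from $r_c = q_c(\cdot+1)-q_c$ and $p_c = -cq_c'$) instead of the second travelling-wave equation.
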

\begin{rmk}
The weight $a/\eps$ which appears in the equation for the renormalized wave profiles $\eqref{eq:eps^2}$ aught to be regarded as the renormalized version of the weight $a$.  Recall that $a \sim \eps$ so $a/\eps \sim 1$.
\end{rmk}
\begin{proof}
We first show that $\eqref{eq:power_rules}$ follows from $\eqref{eq:eps^2}$.  Suppose that $\eqref{eq:eps^2}$ holds.  Then since $\partial^k_\beta \phi_\beta$ doesn't depend on $\eps$ it follows that $\partial_\beta^k \partial_\xi^j \phi^{(\eps,\beta)} = \mathcal{O}(1)$ in the small $\eps$ regime for $j$ and $k$ between $0$ and $3$, in particular for $0 \le j+k \le 3$.  To obtain the bound on $r_c$ and its derivatives in $\eqref{eq:power_rules}$ one merely uses the definition $r_c(x,\tau) = \eps^2 \phi^{(\eps,\beta)}(\eps (x-\tau))$, the observation that $\frac{d \beta}{dc} = \mathcal{O}(\eps^{-2})$ for the scaling $c^2 = 1 + \frac{\eps^2}{12}\beta$, the observation that $\partial_\tau r_c(x,\tau) = \eps^3 \partial_x \phi^{(\eps,\beta)}(\eps(x-\tau))$, and the fact that $\| f(\frac{\cdot}{\eps}) \|_{L^2_a} = \eps^{1/2} \| f(\cdot) \|_{L^2_{\eps a}}$.  To obtain the bound on $p_c$, write 
\begin{equation}
r_c(\frac{x}{\eps}) = q_c(\frac{x}{\eps} + 1) - q_c(\frac{x}{\eps}) = q_c'(\frac{x + \eps \eta}{\eps}) = \frac{-1}{c} p_c(\frac{x + \eps \eta}{\eps})
\label{eq:rc_pc}
\end{equation} 
for some $\eta \in (0,1)$ which may depend upon $x$.  Here the second equality uses the mean value theorem and the last equality uses the fact that wave profiles satisfy the FPU equation to relate spatial and temporal derivatives.
Now use the fact that for any $\eta \in (0,1)$ we have $p_c(\xi + \eta) < p_c(\xi +1) + p_c(\xi)$ to conclude that $\| p_c \|_{L^2_a} < 2c\|r_c \|_{L^2_a}$.  Similarly, we can take derivatives with respect to $c$ or $\tau$ in $\eqref{eq:rc_pc}$ to establish the bound on $p_c$ from the bound on $r_c$ in $\eqref{eq:power_rules}$.

The rest of the proof consists of establishing that $\eqref{eq:eps^2}$ follows from Lemma $\ref{lem:small_eps_1}$.
We first solve the fixed point equation $\phi^{(\eps,\beta)} = P^{(\eps,\beta)} N^\eps(\phi^{(\eps,\beta)})$ in the weighted space of even functions $E^3_a$.  A standard uniform contraction principle approach allows one to continue the fixed point at $\eps = 0$ to nonzero values of $\eps$ so long as the equation is sufficiently smooth in $\eps$, and the linear part of the equation at the base point $\phi_\beta$ is invertible.  Moreover, it is a consequence of the uniform contraction principle that the fixed point $\phi^{\eps,\beta}$ is as smooth in $\beta$ as the operator $P^{\eps,\beta}$ and further that the fixed point $\phi^{\eps,\beta}$ is close to the fixed point $\phi_\beta$ when $\eps$ is close to $0$.  More quantitatively, $\| \phi^{\eps,\beta} - \phi_\beta\| \le \| P^{(\eps,\beta)} N^\eps - P^0 N^0 \|$.  The details of this argument are written out for the $C^1$ case in Lemma A.1 in \cite{PF1}.  Here we apply the $C^3$ uniform contraction principle, rather than the $C^1$ uniform contraction principle, but otherwise the arguments are the same.

Observe that the operator $I - P^0 N^0(\phi_\beta)$ is a compact perturbation of a pseudodifferential operator, thus its essential spectrum in $H^s$ is independent of $s$.  Moreover, from \cite{PW} it's essential spectrum in $E^0_a$ is bounded away from the imaginary axis and it has no even eigenfunctions in $L^2$, thus no eigenfunctions in $E^s_a$ for any $s \ge 0$.  Thus $\| (I - P^0 N^0(\phi_\beta))^{-1}\|_{\mathcal{L}(E^3_a)} \le C$.  From Lemma $\ref{lem:small_eps_1}$ we have  $\| P^{(\eps,\beta)} - P^0 \|_{\mathcal{L}(E^1_a)} \le C\eps^2$.  Thus we have the requisite regularity in $\eps$ and invertibility of the linear part to apply the argument.  We thus obtain for $\eps$ sufficiently small:
\begin{enumerate}
\item that $\phi^{(\eps,\beta)}  \in E^3_a$,
\item that $\| \phi^{(\eps,\beta)} - \phi_\beta \| \le C \eps^2$, 
\item that $\| (I - P^{(\eps,\beta)} N^\eps(x))^{-1} \|_{\mathcal{L}(E^3_a)}$ is bounded uniformly for $\| x - \phi_\beta\|$ small, and
\item that $\beta \mapsto \phi^{\eps,\beta}$ is thrice continuously differentiable.
\end{enumerate}

One may now implicitly differentiate the fixed point equations 
$\phi^{(\eps,\beta)} = P^{(\eps,\beta)} N^\eps (\phi^{(\eps,\beta)})$ and $\phi_\beta = P^0 N^0(\phi_\beta)$, isolate the difference $ \partial_\beta^k \phi^{(\eps,\beta)} - \partial_\beta^k \phi_\beta$, and bound the terms on the right hand side.
In the case $k = 1$ we obtain:
$$\ba{lll}
\partial \phi^{(\eps,\beta)} - \partial \phi_\beta & = & (I - P^{(\eps,\beta)} \partial N^\eps(\phi^{(\eps,\beta)}))^{-1} \left( (P^0\partial N^0(\phi_\beta) - P^{(\eps,\beta)} \partial N^\eps(\phi^{(\eps,\beta)})) \partial_\beta \phi_\beta \right. \\ \\
& & \qquad + \left. \partial_\beta P^{(\eps,\beta)} N^\eps(\phi^{(\eps,\beta)}) - \partial_\beta P^0 N^0(\phi_\beta)\right). \ea $$
Thus in light of $(2)$ above and Lemma $\ref{lem:small_eps_1}$ we see (after adding and subtracting relevant terms and using the triangle inequality) that 
\begin{equation}
\|\partial_\beta \phi^{(\eps,\beta)} - \partial_\beta \phi_\beta\|_{H^s_a} \le C\eps^2
\label{eq:k1}
\end{equation}
as desired.

Implicitly differentiating again we obtain
$$ \partial^2_\beta \phi^{(\eps,\beta)} - \partial^2_\beta \phi_\beta = (I - P^{(\eps,\beta)} \partial N^\eps(\phi^{(\eps,\beta)}))^{-1} ( (i) + (ii) + (iii) + (iv))$$ where
$(i) = (P^0 \partial N^0 (\phi_\beta) - P^{(\eps,\beta)} \partial N^\eps(\phi^{(\eps,\beta)}))\partial_\beta^2 \phi_\beta$,
$(ii) = \partial_\beta^2 P^{(\eps,\beta)} N^\eps(\phi^{(\eps,\beta)}) - \partial_\beta^2 P^0 N^0(\phi_\beta)$,
$(iii) = 2\partial_\beta P^{(\eps,\beta)} \partial N^\eps(\phi^{(\eps,\beta)})\partial_\beta \phi^{(\eps,\beta)} - 2\partial_\beta P^0 \partial N^0(\phi_\beta)\partial_\beta \phi_\beta$, and
$(iv)  = P^{(\eps,\beta)} \partial^2 N^\eps(\phi^{(\eps,\beta)})(\partial_\beta \phi^{(\eps,\beta)})^2 - P^0 \partial^2 N^0(\phi_\beta)(\partial_\beta \phi_\beta)^2$.

We may add and subtract relevant terms, use the triangle inequality, and bound each term with either  $(2)$ above, equation $\eqref{eq:k1}$, or Lemma $\ref{lem:small_eps_1}$ to see

\begin{equation}
 \| \partial^2_\beta \phi^{(\eps,\beta)} - \partial^2_\beta \phi_\beta \|_{H^s_a} \le C \eps^2
 \label{eq:k2}
 \end{equation}
as desired.

Implicitly differentiating once more, we obtain

$$\partial^3_\beta \phi^{(\eps,\beta)} - \partial_\beta^3 \phi_\beta = (I - P^{(\eps,\beta)} \partial N^\eps(\phi^{(\eps,\beta)}))^{-1}( (i) + (ii) + (iii) + (iv) + (v) + (vi) + (vii))$$
with
$(i) = [P^0 \partial N^0(\phi_\beta) - P^{(\eps,\beta)} \partial N^\eps (\phi^{(\eps,\beta)})]\partial_\beta^2 \phi_\beta$,
$(ii) = \partial_\beta^3 P^{(\eps,\beta)} N^\eps(\phi^{(\eps,\beta)}) - \partial_\beta^3 P^0 N^0(\phi_\beta)$,
$(iii) = 3\partial_\beta^2 P^{(\eps,\beta)} \partial N^\eps(\phi^{(\eps,\beta)})\partial_\beta \phi^{(\eps,\beta)} - 3 \partial_\beta^2 P^0 \partial N^0 (\phi_\beta) \partial_\beta \phi_\beta$,
$(iv) = 3\partial_\beta P^{(\eps,\beta)} \partial^2 N^\eps(\phi^{(\eps,\beta)}) (\partial_\beta \phi^{(\eps,\beta)}, \partial_\beta \phi^{(\eps,\beta)}) - 3\partial_\beta P^0 \partial^2 N^0 (\phi_\beta)(\partial_\beta \phi_\beta, \partial_\beta \phi_\beta)$,
$(v) = 3\partial_\beta P^{(\eps,\beta)} \partial N^\eps(\phi^{(\eps,\beta)}) \partial_\beta^2 \phi_\beta - 3\partial_\beta P^0 \partial N^0 (\phi_\beta) \partial_\beta^2 \phi_\beta$, \\
$(vi) = P^{(\eps,\beta)} \partial^3 N(\phi^{(\eps,\beta)})(\partial_\beta \phi^{(\eps,\beta)}, \partial_\beta \phi^{(\eps,\beta)}, \partial_\beta \phi^{(\eps,\beta)}) - P^0 \partial^3 N(\phi^0)((\partial_\beta \phi_\beta, \partial_\beta \phi_\beta, \partial_\beta \phi_\beta)$, and
$(vii) = 3 P^{(\eps,\beta)} \partial^2 N^\eps(\phi^{(\eps,\beta)})(\partial_\beta \phi^{(\eps,\beta)}, \partial_\beta^2 \phi^{(\eps,\beta)}) - 3 P^0 \partial^2 N^0(\phi^0)(\partial_\beta \phi^0, \partial_\beta^2 \phi^0)$.

Once again, we may add and subtract relevant terms, use the triangle inequality, and bound each term with either  $(2)$ above, equation $\eqref{eq:k1}$, equation $\eqref{eq:k2}$ or Lemma $\ref{lem:small_eps_1}$ to see

$$ \| \partial^3_\beta \phi^{(\eps,\beta)} - \partial^3_\beta \phi_\beta \|_{H^s_a} \le C \eps^2$$
as desired.  This completes the proof.

\end{proof}

We now use these small $\eps$ asymptotics to establish the existence of coordinates in a tubular neighborhood of the sum of two well separated solitary waves.

\subsection{Tubular Coordinates}
For the remainder of the paper we will regard $(\tau,c,v_1,v_2)$ as coordinates for the solution $u$.  Here the pair $(\tau,c)$ is shorthand for the four-tuple $(\tau_+,\tau_-,c_+,c_-)$.  The strategy for proving Theorem $\ref{thm:Orbital_Stability}$ is as follows:
\begin{enumerate}
\item Let $u$ be the solution of $\eqref{FPU}$ of interest.  Pick an initial time $t_0$, initial phases $\tau_\alpha^*$, initial speeds $c_\alpha^*$, and initial localized perturbations $v_{2,\alpha}^*$ and let $v_1$ solve $\eqref{FPU}$ with $v_1(t_0) = u(t_0) - \sum_{\alpha} (u_{c_\alpha^*(\cdot - \tau_\alpha^*)} + v_{2,\alpha}^*)$

\item Given $t > t_0$ determine the unique $\tau_\alpha = \tau_\alpha(t)$ and $c_\alpha = c_\alpha(t)$ such that 
$$\omega_\alpha(\xi_{1,\alpha}, (u(t) - \sum_{\alpha} u_{c_\alpha}(\cdot - \tau_\alpha))h_\alpha) = \omega_\alpha(\xi_{2,\alpha}, (u(t) - v_1(t) - \sum_\alpha u_{c_\alpha}(\cdot - \tau_\alpha))h_\alpha) = 0.$$
Define $v_2(t) := u(t) - v_1(t) - \sum_{\alpha} u_{c_\alpha(t)}(\cdot - \tau_\alpha(t))$.

\item Derive equations for the evolution of $\tau_\alpha$, $c_\alpha$, and $v_2$ 

\item Use the equations derived in $(3)$ to control the evolution of $\tau_\alpha$, $c_\alpha$, and $v_2$.

\item Given the coordinates $(\tau,c,v_2)$ constructed in $(2)$ and controlled in $(4)$, conclude that $u(t) = \sum_{\alpha} u_{c_\alpha(t)} (\cdot - \tau_\alpha(t)) + v_2(t) + v_1(t)$ is well-behaved.

\end{enumerate}

In our application of Theorem $\ref{thm:Orbital_Stability}$ to the proof of Theorem $\ref{thm:Collision}$, the choice of $t_0$, $\tau_\alpha^*$, $c_\alpha^*$, and $v_{2,\alpha}^*$ in step $(1)$ are naturally given by Theorem $\ref{thm:Finite_Time_Collision}$.  Step 3 is the subject of section 3.3 and step 4 is the subject of section 4.  Step 5 follows trivially from step 4.  This section is concerned with step 2.  Similar work is done in section 2 of \cite{PF2} and Lemma 4 of \cite{Mizumachi}, though for a single wave.  A key difference is that we must once again control the size with respect to $\eps$ of the neighborhood on which this decomposition is valid.

First, we state and prove the following lemma, which provides some preliminary estimates on the restriction of the symplectic forms to the tangent space of the wave state manifold.  It correponds to Lemma 2.1 in \cite{PF2}.  In what follows the matrix $A$ is given by

\begin{equation} A(\tau,c) = A_0(\tau,c) + A_1(\tau,c) \label{E00} \end{equation} where

$$A_0(\tau,c) = \left(\ba{cc} A_{0+}(\tau_+,c_+) & 0 \\ \\ 0 & A_{0-}(\tau_-,c_-) \ea \right) \mbox{ with }
A_{0\alpha}(\tau_\alpha,c_\alpha) = \left(\ba{cc} \omega_\alpha(\xi_{1,\alpha},\xi_{1,\alpha}h_\alpha) & \omega_\alpha(\xi_{1,\alpha},\xi_{2,\alpha}h_\alpha) \\ \\
\omega_\alpha(\xi_{2,\alpha},\xi_{1,\alpha}h_\alpha) & \omega_\alpha(\xi_{2,\alpha},\xi_{2,\alpha}h_\alpha \ea \right),$$

$$A_1(\tau,c) = \left(\ba{cc} 0 & A_{1+}(\tau_+) \\ \\ A_{1-}(\tau_-) & 0 \ea \right)
\mbox{ with }
A_{1\alpha}(\tau_\alpha,c_\alpha) = \left(\ba{cc} \omega_\alpha(\xi_{1,\alpha},\xi_{1,-\alpha}h_\alpha) & \omega_i(\xi_{1,\alpha},\xi_{2,-\alpha}h_\alpha) \\ \\ \omega_i(\xi_{2,\alpha},\xi_{1,-\alpha}h_\alpha) & \omega_\alpha(\xi_{2,\alpha},\xi_{2,-\alpha} h_\alpha) \ea\right).$$ 

Here as always $\xi_{i,\alpha}$ is evaluated at $(\tau_\alpha,c_\alpha)$.

\begin{rmk}
In the following lemma, and in the remainder of the paper, we use the notation ${\tt exp}$ to denote terms $x$ which satisfy $|x| \le C (e^{-\eps m |\tau_+|} + e^{-\eps m |\tau_-|})$ for some constants $C$ and $m$ which may be chosen uniformly in $\eps$.  When the value of $\tau_\alpha$ is allowed to vary we write ${\tt exp(s)}$ to denote terms $x$ which satisfy $|x| \le C(e^{-\eps m | \tau_+(s)|} + e^{-\eps m |\tau_-(s)|})$.  Notice that for any  $p$ we have $\eps^p {\tt exp} = {\tt exp}$ by making $m$ smaller and $C$ larger if necessary.  This convention will simplify many expressions throughout the paper.  
\end{rmk}

\begin{lemma} \label{L6}
There is a positive constant $T_0$ such that if $\tau_+, - \tau_- \ge T_0$ then the matrix $A = A(\tau,c,w)$ given by $\eqref{E00}$ is invertible.  Furthermore, the off-diagonal terms $A_1$ satisfy $\|A_1\| = {\tt exp}$.  Moreover, in the KdV scaling $\eqref{eq:KdV_approx}$ to leading order in $\eps$ we have $T_0 \sim \frac{|\log \eps|}{\eps}$ and 
\begin{equation}
A^{-1} \sim
\left( \ba{cccc} 
\eps^{-4} & \eps^{-1} & {\tt exp} & {\tt exp} \\ 
\eps^{-1} & {\tt exp} & {\tt exp} & {\tt exp} \\
{\tt exp} & {\tt exp} & \eps^{-4} & \eps^{-1} \\
{\tt exp} & {\tt exp} & \eps^{-1} & {\tt exp}
\ea \right)
\label{eq:Ainv_small_eps}
\end{equation}
\end{lemma}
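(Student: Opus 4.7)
The plan is to decompose $A = A_0 + A_1$, separately analyze each summand, and combine them via a Neumann series. First I will bound $A_1$ exponentially using \eqref{eq:OmegaBdd} together with the exponential decay of the wave profiles from P1 and Lemma~\ref{lem:small_eps_2}. Second, I will compute the entries of the diagonal block $A_0$ to leading KdV order using Lemma~\ref{lem:small_eps_2} and the nondegeneracy P3, then invert $A_0$ by Cramer's rule. Third, I combine the two bounds by a Neumann series, which converges as soon as $\|A_0^{-1}\|\cdot \|A_1\| < 1$; this last inequality forces the threshold $T_0 \sim \eps^{-1}|\log\eps|$.

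For the off-diagonal block, each entry of $A_{1\alpha}$ has the form $\omega_\alpha(\xi_{i,\alpha},\xi_{j,-\alpha} h_\alpha)$, pairing a tangent vector centered at $\tau_\alpha$ with one centered at $\tau_{-\alpha}$, restricted to the same half-lattice as $\tau_\alpha$. I apply \eqref{eq:OmegaBdd} with the weight shifted into the frame of the $\alpha$-wave; since $\xi_{j,-\alpha}$ decays at rate $\sim \eps$ away from $\tau_{-\alpha}$ while the restricted half-lattice sits past the other wave, the pairing picks up a factor $e^{-m\eps(\tau_+-\tau_-)}$ which absorbs any polynomial $\eps$-prefactors coming from Lemma~\ref{lem:small_eps_2}. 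Hence $\|A_1\| = \texttt{exp}$.

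For the diagonal block, I evaluate the four entries of $A_{0\alpha}$ one at a time. Because $\xi_{1,\alpha} = \partial_x u_{c_\alpha}$ is zero mean (equivalently $\xi_{1,\alpha} = -c_\alpha^{-1} J H'(u_{c_\alpha})$, so it lies in the range of $J$), antisymmetry gives $\omega_\alpha(\xi_{1,\alpha},\xi_{1,\alpha}) = 0$; replacing the second slot by $\xi_{1,\alpha} h_\alpha$ removes only the exponentially decaying tail of the wave on the wrong side of the origin, leaving the $(1,1)$ entry of size $\texttt{exp}$. The identity $\omega_\alpha(Jz,y)=\langle z,y\rangle$ together with P3 gives $\omega_\alpha(\xi_{1,\alpha},\xi_{2,\alpha}) = -\alpha c_\alpha^{-1}\,dH(u_{c_\alpha})/dc \sim \eps$; the $h_\alpha$-truncation again only contributes \texttt{exp}, and a parallel computation (or antisymmetry in the zero-mean slot) gives $\omega_\alpha(\xi_{2,\alpha},\xi_{1,\alpha}h_\alpha) \sim \eps$. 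Finally \eqref{eq:OmegaBdd} and the size $\|\xi_{2,\alpha}\|_a \sim \eps^{-1/2}$ from Lemma~\ref{lem:small_eps_2} yield $|\omega_\alpha(\xi_{2,\alpha},\xi_{2,\alpha}h_\alpha)| \le C\eps^{-2}$. Thus $\det A_{0\alpha} = -\omega_\alpha(\xi_{1,\alpha},\xi_{2,\alpha}h_\alpha)\omega_\alpha(\xi_{2,\alpha},\xi_{1,\alpha}h_\alpha) + \texttt{exp} \sim -\eps^2$, nonzero by P3, and Cramer's rule reads off $A_{0\alpha}^{-1}$ with block pattern $\bigl(\begin{smallmatrix} \eps^{-4} & \eps^{-1} \\ \eps^{-1} & \texttt{exp}\end{smallmatrix}\bigr)$.

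Finally I write $A = A_0(I + A_0^{-1}A_1)$. Since $\|A_0^{-1}\| \sim \eps^{-4}$ and $\|A_1\| \le Ce^{-m\eps \min(\tau_+,-\tau_-)}$, requiring $\min(\tau_+,-\tau_-) \ge T_0 \sim \eps^{-1}|\log\eps|$ with a sufficiently large implicit constant ensures $\|A_0^{-1}A_1\| < 1/2$, so the Neumann series converges and $A^{-1} - A_0^{-1}$ is again \texttt{exp}; therefore $A^{-1}$ inherits the block structure displayed in \eqref{eq:Ainv_small_eps}. The main obstacle will be the explicit evaluation of the $(1,2)$ and $(2,1)$ entries of $A_{0\alpha}$: one needs P3 to guarantee that their product (and hence the determinant of $A_{0\alpha}$) is of order exactly $\eps^2$ rather than accidentally degenerating, and one must verify that the half-line restrictions $h_\alpha$ do not swallow this leading contribution. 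This is close to the single-wave analysis in Lemma~2.1 of \cite{PF2} and Lemma~4 of \cite{Mizumachi}, but the $\eps$-uniform renormalized estimates of Lemma~\ref{lem:small_eps_2} must be threaded through each step.
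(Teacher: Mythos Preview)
Your approach is correct and essentially the same as the paper's. The paper makes the intermediate decomposition $A_{0,\alpha} = A_0^{FP} - \Delta_{0,\alpha}$, where $A_0^{FP}$ is the matrix of \cite{PF2} with no $h_\alpha$ cutoff, and then cites Lemma~2.1 of \cite{PF2} and Lemma~9.1 of \cite{PF1} for the two-sided bounds $A_0^{FP} \sim \bigl(\begin{smallmatrix} 0 & \eps \\ \eps & \eps^{-2}\end{smallmatrix}\bigr)$; you instead analyze $A_{0\alpha}$ directly and argue the $h_\alpha$ truncation only removes an exponentially small tail, which amounts to the same thing. One small point: your bound on the $(2,2)$ entry via \eqref{eq:OmegaBdd} gives only an upper bound $\le C\eps^{-2}$, whereas the $\sim$ in \eqref{eq:Ainv_small_eps} requires a matching lower bound on $\omega_\alpha(\xi_{2,\alpha},\xi_{2,\alpha})$; this is supplied by Lemma~9.1 of \cite{PF1}, so you should cite it explicitly rather than rely on \eqref{eq:OmegaBdd} alone.
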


\begin{rmk}
When $A$ and $B$ are matrices and we write $A \sim B$, this means that there are positive constants $c_{ij}$ and $C_{ij}$ which don't depend on $\eps$ such that $c_{ij} b_{ij} \le  a_{ij} \le C_{ij} b_{ij}$ holds.
\end{rmk}

\begin{proof}
We show that $A_0$ is the sum of a constant invertible matrix, plus a term which is exponentially small in the phases $\tau_+$ and $\tau_-$ which measure the separation between the solitary waves $u_{c_\alpha}$ and the cutoff of the localization $h_\alpha$.  We also show that the off-diagonal terms $A_1$ go to zero exponentially in the phases $\tau_+$ and $\tau_-$.  This completes the proof.  The details follow.

The matrices $A_{0,\alpha}$ were studied in Lemma 2.1 of \cite{PF2} for the case $\alpha = +$ and $h_+ \equiv 1$.  Our result is similar though due to the presence of the non-constant function $h_\alpha$ the matrices contain exponentially small, time dependent components.  Furthermore, the collision problem demands that we keep careful track of the $\eps$-dependence of all quantities, unlike \cite{PF2}.  Consider the matrix
$$
\ba{lll}
A_{0,+}(\tau_+,c_+) & = & \left( \ba{cc} \omega_+(\xi_{1,+},\xi_{1,+}h_+) & \omega_+(\xi_{1,+},\xi_{2,+}h_+) \\ \\ \omega_+(\xi_{2,+},\xi_{1,+}h_+) & \omega_+(\xi_{2,+},\xi_{2,+}h_+) \ea \right)  \\ \\
& = & \left( \ba{cc} \omega_+(\xi_{1,+},\xi_{1,+}) & \omega_+(\xi_{1,+},\xi_{2,+}) \\ \\ \omega_+(\xi_{2,+},\xi_{1,+}) & \omega_+(\xi_{2,+},\xi_{2,+}) \ea \right)  - \left( \ba{cc} \omega_+(\xi_{1,+},\xi_{1,+}h_-) & \omega_+(\xi_{1,+},\xi_{2,+}h_-) \\ \\ \omega_+(\xi_{2,+},\xi_{1,+}h_-) & \omega_+(\xi_{2,+},\xi_{2,+} h_-) \ea \right) \\ \\
& := & A_0^{FP} - \Delta_{0,+}(\tau_+,c_+).
\ea
$$
Here $A_0^{FP}$ is exactly the matrix studied in Lemma 2.1 of \cite{PF2}.  From that lemma, and from Lemma 9.1 of \cite{PF1} it follows both that $A_0^{FP}$ is constant and invertible, and that $A_0^{FP} \sim \left( \ba{cc} 0 & \eps \\ \eps & \eps^{-2} \ea \right)$; thus $(A_0^{FP})^{-1} \sim \left( \ba{cc} \eps^{-4} & \eps^{-1} \\ \eps^{-1} & 0 \ea \right)$. For the matrix $\Delta_{0,+}$, note that due to the inequality $\eqref{eq:OmegaBdd}$ we see that $\omega_\alpha(\xi_{i,\alpha},\xi_{j,\alpha} h_{-\alpha}) \le K\| \xi_{i,\alpha} \|_{\ell^2_{-\alpha a}}  \| \xi_{j,\alpha} h_{-\alpha} \|_{\ell^2_{\alpha a}}$.  In light of the fact that $|\xi_{j,\alpha}(k)| \le Ke^{-a| k - \tau_\alpha|}$ we see that $$\| \xi_{j,+} h_{-} \|_{\ell^2_a}^2 = \sum_{k = -\infty}^{-1} Ke^{-2a |k - \tau_+|} \le \frac{K}{a} e^{-2a |\tau_+|} = {\tt exp}$$
In the last inequality we have used the fact that $\tau_+ > 0$.  The factor of $a$ in the denominator comes from summing the exponential.  Thus we obtain that $A_{0,+}$ is the sum of a constant, invertible matrix plus a perturbation that is exponentially small in $|\tau|$.  The inequality $ \|\xi_{j,-}h_+\|_{\ell^2_{-a}} \le \frac{K}{a} e^{-a |\tau_-|}$ also holds, thus we obtain an analogous result for $A_{0,-}$.  In fact, these bounds suffice to yield $\|A_1\| \le \frac{K}{a} (e^{-a \tau_+} + e^{a \tau_-})$.  In particular, denoting $A^{FP} = \left( \ba{cc} A^{FP}_0 & 0 \\ 0 & A^{FP}_0 \ea \right)$ we obtain 
$$ \| A - A^{FP} \| < \frac{K}{a} e^{-a T_0}$$
for $|\tau_\alpha| > T_0$.  Since $A^{FP}_0$ is invertible, it follows that $A$ is also invertible as long as $\frac{K}{a} e^{-a T_0}\|(A^{FP})^{-1}\| < 1$.  Isolating $T_0$ we see that $A$ is invertible for 
\begin{equation}
T_0 > \frac{1}{a}\log\left(\frac{a}{K\|(A^{FP})^{-1}\|}\right) \sim \eps^{-1} \log(\eps^5) \sim \eps^{-1} \log(\eps)
\label{eq:T_0def}
\end{equation}

Since $A^{-1}$ differs from $(A^{FP})^{-1}$ only by terms of the form {\tt exp}, $\eqref{eq:Ainv_small_eps}$ holds and the proof is complete.

\end{proof}

The following proposition quantifies the size of the neighborhood of two well separated solitary waves on which the symplectic orthogonality conditions $\eqref{Perp1}$ and $\eqref{Perp2}$ uniquely specify the modulation parameters $c$ and $\tau$ in the ansatz $\eqref{eq:Ansatz}$.  Its proof is given in Appendix A.3.

\begin{prop} \label{Ptube}
Let $u$ and $v_1$ in $\ell^2$ be given.
Define 
\[ \hat{u}(\tau_+,\tau_-,c_+,c_-) := u_{c_+}(\cdot - \tau_+) + u_{c_-}(\cdot - \tau_-).
\]   For each $\eta > 0$ there is a $C > 0$ which may be chosen uniformly in $\eps$ such that whenever 
\begin{equation} \| v_1 \| + \|(u - \hat{u}(\tau^*_+,\tau^*_-,c^*_+,c^*_-) - v_1)h_\alpha\|_{\ell^2_\alpha} < C\eps^{5/2 + \eta} \qquad t \in [t_0,t_1] \label{eq:usmall}
\end{equation}
for some $\tau^*_- < -T_0 < T_0 < \tau^*_+$ and $c^*_- \sim -1 - \eps^2 < 1 + \eps^2 \sim c^*_+$,
then there is a unique choice of phase $\tau_\alpha$ and speed $c_\alpha$ such that 
\[
\omega_\alpha(\xi_{1,\alpha}, (u - \hat{u}(\tau_\alpha,c_\alpha)h_\alpha) = \omega_\alpha(\xi_{2,\alpha}, (u - \hat{u}(\tau_+,\tau_-,c_+,c_-) - v_1)h_\alpha) = 0
\]
holds.  In particular, upon denoting $v_2(t) = u - v_1 - \hat{u}(\tau_+,\tau_-,c_+,c_-)$ the orthogonality conditions $\eqref{Perp1}$ and $\eqref{Perp2}$ are satisfied.
\end{prop}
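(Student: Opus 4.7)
The natural plan is to set up a quantitative implicit function theorem. Define a map $F \colon \mathbb{R}^4 \to \mathbb{R}^4$ whose four components are the quantities whose vanishing is required, namely
$$F_i^\alpha(\tau,c) := \omega_\alpha\!\left(\xi_{i,\alpha}(\tau_\alpha,c_\alpha),\, (u - \hat u(\tau,c) - \delta_{i,2}\,v_1)h_\alpha\right)$$
for $i \in \{1,2\}$, $\alpha \in \{+,-\}$, and seek a unique zero of $F$ near the base point $(\tau^*, c^*)$. Using $\partial_{\tau_\beta} \hat u = -\xi_{1,\beta}$, $\partial_{c_\beta} \hat u = \xi_{2,\beta}$, and the fact that $\xi_{i,\alpha}$ depends only on $(\tau_\alpha, c_\alpha)$, a short computation gives $DF(\tau^*,c^*) = -A(\tau^*,c^*) + R$, where $A$ is the matrix of Lemma \ref{L6} and $R$ is linear in the residual $u - \hat u - \delta_{i,2}v_1$, which is small by the hypothesis $\eqref{eq:usmall}$.

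The next step is a contraction mapping argument for $G(\tau,c) := (\tau,c) + A(\tau^*,c^*)^{-1} F(\tau,c)$ on a closed ball around $(\tau^*, c^*)$ of radius $\rho$ to be chosen. Two estimates are needed: (a) $|G(\tau^*,c^*) - (\tau^*,c^*)| \le \rho/2$ and (b) the map $(\tau,c) \mapsto A^{-1}\bigl(F(\tau,c) - A(\tau^*,c^*)((\tau,c) - (\tau^*,c^*))\bigr)$ is $\tfrac12$-Lipschitz on this ball. For (a), the $v_2$-contributions to $F(\tau^*,c^*)$ are estimated via $\eqref{eq:OmegaBdd}$ together with the power-rule estimates of Lemma \ref{lem:small_eps_2}: $|\omega_\alpha(\xi_{i,\alpha}, v_2 h_\alpha)| \le C\|\xi_{i,\alpha}\|\,\|v_2 h_\alpha\|_\alpha$, which is $\mathcal{O}(\eps^{4+\eta})$ for $i=1$ and $\mathcal{O}(\eps^{1+\eta})$ for $i=2$. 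The $v_1$-contribution to $F_1^\alpha$, which is not weighted-small a priori, is handled by a discrete summation by parts: the partial-sum antiderivative of $\xi_{1,\alpha}$ appearing in the defining sum for $\omega_\alpha$ is comparable to $u_{c_\alpha}(\cdot - \tau_\alpha)$, giving $|\omega_\alpha(\xi_{1,\alpha}, v_1 h_\alpha)| \le C\|u_{c_\alpha}\|\,\|v_1\| \le C\eps^{3/2} \cdot \eps^{5/2+\eta}$, of the same order. Multiplying by the row scalings of $A^{-1}$ from $\eqref{eq:Ainv_small_eps}$ yields $\rho \sim \eps^\eta$ in the $\tau$-coordinates and $\rho \sim \eps^{3+\eta}$ in the $c$-coordinates. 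For (b), the smoothness of $F$ is controlled by second derivatives of $\hat u$ and of the $\xi_{i,\alpha}$, for which Lemma \ref{lem:small_eps_2} supplies uniform-in-$\eps$ bounds on the chosen ball; together with the bound $\|A^{-1}\| \lesssim \eps^{-4}$ this gives a Lipschitz estimate $\le C\eps^{-4}\rho \cdot (\text{sup of second derivatives}) < 1/2$ for $\eps$ small.

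The main obstacle is the $\eps$-bookkeeping: $\|A^{-1}\|$ has entries as large as $\eps^{-4}$, and $\|\xi_{2,\alpha}\|$ is as large as $\eps^{-1/2}$, so naive estimates give products exceeding $1$. The hypothesis threshold $\eps^{5/2+\eta}$ is tuned exactly so that the resulting contraction factor remains strictly less than $1$ uniformly in $\eps$, and so that the ball radius $\rho$ is small enough that the neutral modes $\xi_{i,\alpha}(\tau_\alpha,c_\alpha)$ do not significantly change over it. The exponentially small cross terms from the off-diagonal block $A_1(\tau^*)$ are absorbed by the separation hypothesis $\tau^*_- < -T_0 < T_0 < \tau^*_+$, which by Lemma \ref{L6} both makes $A$ invertible and forces $\|A_1\| = \mathtt{exp}$.
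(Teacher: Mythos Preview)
Your setup is correct and parallels the paper: define $F$, compute $DF = -A + R$ with $A$ from Lemma \ref{L6}, and run a contraction on $G = \mathrm{id} + A^{-1}F$. Your estimate for (a) is fine, including the use of $\xi_{1,\alpha} = c_\alpha^{-1} J H'(u_{c_\alpha})$ to pair with $v_1$ in the unweighted space.

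The genuine gap is in (b). You write that the Lipschitz constant is $\le C\eps^{-4}\rho \cdot (\text{sup of second derivatives})$, but the sup of $\|D^2F\|$ is itself of order $\eps^{-4}$ (from $\partial_c^2 F_{2,\alpha}$, which contains $\omega_\alpha(\partial_c \xi_{2,\alpha},\xi_{2,\alpha}) \sim \eps^{-4}$ by Lemma \ref{lem:small_eps_2} and \eqref{eq:OmegaBdd}). Thus the naive bound on the Lipschitz constant of $G$ is $\sim \eps^{-8}\rho$, and since you correctly found $\rho \sim \eps^\eta$ in the $\tau$-direction, this is enormous, not $<1/2$. Your final paragraph acknowledges the bookkeeping obstacle but does not actually resolve it; the sentence ``the hypothesis threshold $\eps^{5/2+\eta}$ is tuned exactly so that the resulting contraction factor remains strictly less than $1$'' is an assertion, not an argument, and in fact it fails with the estimates as written.

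What the paper does to close this is to renorm $\R^4$ anisotropically by
\[
|(\tau_+,\tau_-,c_+,c_-)|_r := \bigl(\eps^3(\tau_+^2+\tau_-^2) + \eps^{-3}(c_+^2+c_-^2)\bigr)^{1/2},
\]
and then track the \emph{entrywise} structure of $A^{-1}$ and of $D^2F_{i,\alpha}$ (both of which are $2\times 2$ blocks whose entries scale at very different rates in $\eps$) rather than using operator-norm bounds. In this weighted norm the contraction factor comes out as $\sim \eps^\eta$, uniformly. Your observation that the ball is anisotropic (different radii in the $\tau$- and $c$-directions) is exactly the right instinct; you just need to carry that anisotropy through the Lipschitz estimate as well, not only through the estimate of $G(\tau^*,c^*)$.
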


\subsection{The Modulation Equations}

Having established that studying the flow of FPU near the sum of well-separated solitary waves is equivalent to studying the coordinates $(\tau,c,v_2)$ we return to equation $\eqref{eq:perturb}$ which describes the evolution of $v_{2,\alpha}$.  To completely describe the evolution of the coordinates $(\tau,c,v_2)$, we must derive evolution equations for the modulation parameters $\tau$ and $c$.  In order to work with modulation parameters whose variation is small, we define $\gamma_\pm$ by $\tau_\pm(t) = \int_{t_0}^t c_\pm(s) ds + \gamma_\pm(t)$ and we study the evolution of $\gamma$ and $c$.
Modulation equations are obtained by applying $\omega_\alpha(\xi_{i,\alpha},\cdot)$ to $\eqref{eq:perturb}$ and moving all terms which do not depend upon $\dot{c}$ or $\dot{\gamma}$ to the right hand side.  The following lemma makes this precise.

\begin{lemma} \label{perp4}
Let $t_0 < t_1$ be real numbers and suppose that the ansatz $\eqref{eq:Ansatz}$ and orthogonality conditions $\eqref{Perp1}$ and $\eqref{Perp2}$ are valid for $t \in [t_0,t_1]$.  Then
\begin{equation}
\omega_\alpha(\xi_{2,\alpha},L_\alpha v_{2,\alpha}) = N_{2,\alpha} + \ell_{2,\alpha}, 
 \label{eq:DiffPerp1}
 \end{equation}
 and
 \begin{equation}
\omega_\alpha(\xi_{1,\alpha},L_\alpha v_{2,\alpha}) = N_{1,\alpha} + \ell_{1,\alpha} \label{eq:DiffPerp2}
\end{equation}
hold for $t \in (t_0,t_1)$.  Here $\xi_{i,\alpha}$ is evaluated at $(\tau_\alpha(t),c_\alpha(t))$, $L_\alpha v_{2,\alpha}$ is evaluated at $t$, and the quantities $\ell_{i,\alpha}$ and $N_{i,\alpha}$ are given by \begin{equation} 
\ell_{1,\alpha} = -\dot{\gamma}_\alpha \omega_\alpha(\partial_\tau \xi_{1,\alpha},v_\alpha) - \dot{c}_\alpha \omega_\alpha( \partial_c \xi_{1,\alpha}, v_\alpha)
\label{eq:ell1_alpha}
\end{equation}
\begin{equation}
\ell_{2,\alpha} = -\dot{\gamma}_\alpha \omega_\alpha(\partial_\tau \xi_{2,\alpha},v_{2,\alpha}) - \dot{c}_\alpha \omega_\alpha(\partial_c \xi_{2,\alpha}, v_{2,\alpha})
\label{eq:ell2_alpha}
\end{equation}
and
\begin{equation}
N_{1,\alpha} = - \omega_\alpha(\xi_{1,\alpha}, L_\alpha v_{1,\alpha}), \qquad 
N_{2,\alpha} = -\omega_\alpha(\xi_{1,\alpha},v_{2,\alpha}).
\label{eq:N_alpha}
\end{equation}

\end{lemma}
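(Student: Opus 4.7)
The plan is to differentiate the orthogonality conditions $\eqref{Perp1}$ and $\eqref{Perp2}$ in time and recast the result in terms of $\omega_\alpha(\xi_{i,\alpha}, L_\alpha v_\alpha)$. Two auxiliary identities will do the work of converting the temporal derivative of the tangent vectors $\xi_{i,\alpha}$ into $L_\alpha$. First, differentiating the traveling wave relation $-c_\alpha \xi_{1,\alpha} = JH'(u_{c_\alpha}(\cdot - \tau_\alpha))$ in $\tau_\alpha$ and in $c_\alpha$ yields
$$JH''(u_{c_\alpha}) \xi_{1,\alpha} = c_\alpha \partial_\tau \xi_{1,\alpha}, \qquad JH''(u_{c_\alpha}) \xi_{2,\alpha} = -\xi_{1,\alpha} - c_\alpha \partial_c \xi_{1,\alpha}.$$
Second, from equality of mixed partials, $\partial_c \xi_{1,\alpha} = -\partial_\tau \xi_{2,\alpha}$. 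Throughout I will use the pairing $\omega_\alpha(Jz, y) = \langle z, y\rangle$ together with self-adjointness of $H''(u_{c_\alpha})$ and the antisymmetry of $\omega_\alpha$ when one argument is zero-mean, which is exactly the setting that arises since $JH''(u_{c_\alpha}) w$ always lies in the range of $J$.

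For $\eqref{eq:DiffPerp2}$, I will start from $\omega_\alpha(\xi_{1,\alpha}, v_\alpha) = 0$ and differentiate, writing $\dot{\tau}_\alpha = c_\alpha + \dot{\gamma}_\alpha$ so that the result takes the form
$$\omega_\alpha(\xi_{1,\alpha}, \partial_t v_\alpha) + c_\alpha \omega_\alpha(\partial_\tau \xi_{1,\alpha}, v_\alpha) = \ell_{1,\alpha}.$$
Applying the first traveling wave identity, self-adjointness, and antisymmetry converts $c_\alpha \omega_\alpha(\partial_\tau \xi_{1,\alpha}, v_\alpha)$ into $-\omega_\alpha(\xi_{1,\alpha}, JH''(u_{c_\alpha}) v_\alpha)$. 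The left-hand side then collapses to $\omega_\alpha(\xi_{1,\alpha}, L_\alpha v_\alpha)$, and splitting $v_\alpha = v_{1,\alpha} + v_{2,\alpha}$ relocates the $v_{1,\alpha}$ piece to the right as $N_{1,\alpha} = -\omega_\alpha(\xi_{1,\alpha}, L_\alpha v_{1,\alpha})$, producing the claim. For $\eqref{eq:DiffPerp1}$, the analogous calculation starting from $\omega_\alpha(\xi_{2,\alpha}, v_{2,\alpha}) = 0$ produces $c_\alpha \omega_\alpha(\partial_\tau \xi_{2,\alpha}, v_{2,\alpha})$, which I will rewrite via the mixed-partial identity as $-c_\alpha \omega_\alpha(\partial_c \xi_{1,\alpha}, v_{2,\alpha})$ and then process using the second traveling wave identity. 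The extra $\xi_{1,\alpha}$ that is generated becomes precisely $N_{2,\alpha} = -\omega_\alpha(\xi_{1,\alpha}, v_{2,\alpha})$.

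The main obstacle I anticipate is bookkeeping around the antisymmetry manipulations, since $\omega_\alpha$ is only antisymmetric when one argument is zero-mean. The exchange $\omega_\alpha(JH''(u_{c_\alpha})\xi_{i,\alpha}, v_\alpha) = -\omega_\alpha(\xi_{i,\alpha}, JH''(u_{c_\alpha}) v_\alpha)$ that I rely on is legitimate because $JH''(u_{c_\alpha}) v_\alpha$ lies in the range of $J$ and is therefore zero-mean, but to invoke it one must verify that the relevant $\omega_\alpha$-sums converge absolutely; this follows from the exponential decay of $\xi_{i,\alpha}$ combined with membership of $v_\alpha$ in the appropriate weighted space via $\eqref{eq:OmegaBdd}$. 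Once those identities are in place the lemma follows by direct rearrangement — there are no estimates, just algebra.
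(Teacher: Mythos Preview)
Your proposal is correct and follows essentially the same route as the paper: differentiate the orthogonality conditions, split $\dot\tau_\alpha = c_\alpha + \dot\gamma_\alpha$, and use the traveling-wave identities for $\partial_\tau \xi_{i,\alpha}$ together with $\omega_\alpha(Jx,y) = -\omega_\alpha(x,Jy)$ to recombine the result into $\omega_\alpha(\xi_{i,\alpha},L_\alpha\,\cdot\,)$. The only cosmetic difference is that you factor the $\xi_{2,\alpha}$ identity through the mixed-partial relation $\partial_c\xi_{1,\alpha} = -\partial_\tau\xi_{2,\alpha}$ and the form $JH''(u_{c_\alpha})\xi_{2,\alpha} = -\xi_{1,\alpha} - c_\alpha\partial_c\xi_{1,\alpha}$, whereas the paper states the equivalent relation $\partial_\tau\xi_{2,\alpha} = \tfrac{1}{c_\alpha}JH''(u_{c_\alpha})\xi_{2,\alpha} \pm \tfrac{1}{c_\alpha}\xi_{1,\alpha}$ directly; the two are the same computation.
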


\begin{proof}
The proof proceeds by implicitly differentiating
$$\omega_\alpha( \xi_{1,\alpha}(\tau_\alpha(t),c_\alpha(t)), v_\alpha(t)) \equiv \omega_\alpha(\xi_{2,\alpha}(\tau_\alpha(t),c_\alpha(t)), v_{2,\alpha}(t)) \equiv 0$$ to obtain
\begin{equation}  
\dot{\tau}_\alpha \omega_\alpha(\partial_\tau \xi_{1,\alpha}, v_\alpha) + \dot{c}_\alpha \omega_\alpha (\partial_c \xi_{1,\alpha}, v_\alpha) + \omega_\alpha(\xi_{1,\alpha}, \partial_t v_\alpha) \equiv 0 
\label{eq:lemdd1}
\end{equation}\
and
\begin{equation}
\dot{\tau}_\alpha \omega_\alpha(\partial_\tau \xi_{2,\alpha},v_{2,\alpha}) + \dot{c}_\alpha \omega_\alpha(\partial_c \xi_{2,\alpha}, v_{2,\alpha}) + \omega_\alpha(\xi_{2,\alpha}, \partial_t v_{2,\alpha}) \equiv 0 
\label{eq:lemdd2}
\end{equation}
Use the fact that $\partial_\tau \xi_{2,\alpha} = \frac{1}{c_\alpha} JH''(u_{c_\alpha})\xi_{2,\alpha} - \frac{1}{c_\alpha} \xi_{1,\alpha}$, that $\omega_\alpha(Jx,y) = -\omega_\alpha(x,Jy)$, and that $\dot{\tau}_\alpha = c_\alpha + \dot{\gamma}_\alpha$ to rewrite $\eqref{eq:lemdd2}$ as 
$$\omega_\alpha(\xi_{2,\alpha},L_\alpha v_{2,\alpha}) - \omega_\alpha(\xi_{1,\alpha},v_{2,\alpha}) + \dot{\gamma} \omega_\alpha(\partial_\tau \xi_{2,\alpha}, v_{2,\alpha}) + \dot{c}_\alpha \omega_\alpha(\partial_c \xi_{2,\alpha}, v_{2,\alpha}) = 0$$
which is exactly $\eqref{eq:DiffPerp1}$.  Similarly, use the fact that $\partial_\tau \xi_{1,\alpha} = \frac{1}{c_\alpha} JH''(u_{c_\alpha})\xi_{1,\alpha}$, that $\omega_\alpha(Jx,y) = -\omega_\alpha(x,Jy)$, and that $\dot{\tau}_\alpha = c_\alpha + \dot{\gamma}_\alpha$ to obtain
$$ \omega_\alpha(\xi_{1,\alpha},L_\alpha v_\alpha) + \dot{\gamma}_\alpha \omega_\alpha(\partial_\tau \xi_{1,\alpha},v_\alpha) + \dot{c}_\alpha \omega_\alpha(\partial_c \xi_{1,\alpha},v_\alpha) = 0$$ which is exactly $\eqref{eq:DiffPerp2}$.
\end{proof}

With the aim of using Lemma $\ref{perp4}$ to obtain equations for $\dot{c}_\alpha$ and $\dot{\gamma}_\alpha$, define $\tilde{\ell}_{i,\alpha} := \omega_\alpha(\xi_{i,\alpha},\tilde{\ell}_\alpha)$ and 
$\tilde{N}_{i,\alpha} := \omega_\alpha(\xi_{i,\alpha},(Jg_\alpha)h_\alpha)$ where $\tilde{\ell}_\alpha$ and $g_\alpha$ are defined in $\eqref{eq:ldef}$ and $\eqref{eq:gdef}$ respectively
so that upon applying $\omega_\alpha(\xi_{i,\alpha},\cdot)$ to equation $\eqref{eq:perturb}$ and using Lemma $\ref{perp4}$ we obtain a system of four ordinary differential equations for $\dot{c}_\alpha$ and $\dot{\gamma}_\alpha$ which we may write in components as
$$\tilde{\ell}_{i,\alpha} - \ell_{i,\alpha} = N_{i,\alpha} - \tilde{N}_{i,\alpha} \qquad i = 1, \cdots 4,$$
and as a system 
\begin{equation}
\left( \ba{cc} A_{0+} - B_+ & A_{1+} \\ \\ A_{1-} & A_{0-} - B_- \ea \right) \left( \ba{c} \dot{\gamma}_+ \\ \dot{c}_+ \\ \dot{\gamma}_- \\ \dot{c}_- \ea \right) = \left( \ba{c} N_{1,+} - \tilde{N}_{1,+} \\ N_{2,+} - \tilde{N}_{2,+} \\ N_{1,-} - \tilde{N}_{1,-} \\ N_{2,-} - \tilde{N}_{2,-} \ea \right).
\label{eq:mod1}
\end{equation}
Here $A_{i,\alpha}$ is defined in $\eqref{E00}$ and studied in Lemma $\ref{L6}$.  The nonconstant parts of $A_{i,\alpha}$ correspond to the inhomogeneous term $\tilde{\ell}_{i,\alpha}$ which appears on the right hand side of the perturbation equation $\eqref{eq:perturb}$ and arise because the modulation parameters $c_\alpha$ and $\tau_\alpha$ are allowed to vary.  

The matrix $B_{\alpha}$, which is given by $$B_{\alpha} = \left( \ba{cc} 
\omega_\alpha(\partial_\tau \xi_{1,\alpha}, v_\alpha) 
& \omega_\alpha(\partial_c \xi_{1,\alpha},v_\alpha) \\ \\
\omega_\alpha(\partial_\tau \xi_{2,\alpha},v_{2,\alpha}) 
& \omega_\alpha(\partial_c \xi_{2,\alpha},v_{2,\alpha}) 
\ea \right)$$
corresponds to the inhomogeneous terms $\ell_{i,\alpha}$ which measure the failure of the range of the differential operator $L_\alpha$ to be symplectically orthogonal to the tangent vectors $\xi_{1,\alpha}$ and $\xi_{2,\alpha}$.  So long as the smallness condition $\eqref{eq:usmall}$ holds, it follows that 
$$A_{0,\alpha} - B_\alpha \sim \left( \ba{cc}
\eps^{5/2} (\|v_{1,\alpha}\|_{W_\alpha} + \|v_{2,\alpha}\|_\alpha) & \eps \\ 
\eps & \eps^{-2} \ea \right).$$
Here we have used $\eqref{eq:power_rules}$, $\eqref{eq:usmall}$, the fact that $\| \cdot \|$ bounds $\| \cdot \|_{W_\alpha}$, and the fact that $u_{c_\alpha}$ is exponentially localized.  Moreover, as before the off-diagonal terms are not only bounded above by a multiple of $\eps$ but also bounded below by a multiple of $\eps$.  Thus so long as $\eqref{eq:usmall}$ holds, $A_{0,\alpha} - B_\alpha$ is invertible.  Let $\hat{A}$ denote the matrix on the right hand side of $\eqref{eq:mod1}$.  In light of the fact that the off-diagonal terms of $\hat{A}$ are exponentially small, we see that $\hat{A}$ is invertible and to leading order in $\eps$ is given by 
$$\hat{A}^{-1} \sim \left( \ba{llll} \eps^{-4} & \eps^{-1} & {\tt exp} & {\tt exp} \\
\eps^{-1} & \eps^{1/2} (\|v_{1,+}\|_{W_+} + \|v_{2,+}\|_+)  & {\tt exp} & {\tt exp} \\ 
{\tt exp} & {\tt exp} & \eps^{-4} & \eps^{-1} \\ 
{\tt exp} & {\tt exp} & \eps^{-1} & \eps^{1/2} (\|v_{1,-}\|_{W_-} + \|v_{2,-}\|_-)  
\ea \right) $$
Thus we have proven
\begin{lemma} \label{lem:cdot_gammadot}
Assume that $\eqref{eq:usmall}$ holds.  Then there is a constant $K$ which does not depend on $\eps$ such that 
\begin{equation} \label{eq:cdot}
|\dot{c}_\alpha|  \le  K \left( \eps^{-1} | N_{1,\alpha} - \tilde{N}_{1,\alpha} | + \eps^{1/2} (\|v_{1,\alpha}\|_{W_\alpha} + \|v_{2,\alpha}\|_\alpha) | N_{2,\alpha} - \tilde{N}_{2,\alpha}|\right) 
 +  \left( \sum_{i = 1}^2 | N_{i,-\alpha} - \tilde{N}_{i,-\alpha} | \right){\tt exp}
\end{equation}
and
\begin{equation} \label{eq:gammadot}
|\dot{\gamma}_\alpha|  \le  K\left(
\eps^{-4} |N_{1,\alpha} - \tilde{N}_{1,\alpha}| + \eps^{-1}  |N_{2,\alpha} - \tilde{N}_{2,\alpha}| \right) + \left( \sum_{i=1}^2 |N_{i,-\alpha} - \tilde{N}_{i,-\alpha}| \right) {\tt exp} 
\end{equation}
\end{lemma}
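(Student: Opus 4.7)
The plan is essentially to execute the strategy already outlined in the text preceding the lemma: apply the symplectic forms $\omega_\alpha(\xi_{i,\alpha},\cdot)$ to the perturbation equation $\eqref{eq:perturb}$, arrive at the linear system $\eqref{eq:mod1}$ for $(\dot{\gamma}_+,\dot{c}_+,\dot{\gamma}_-,\dot{c}_-)^T$, and then invert the coefficient matrix with explicit control on the $\eps$-dependence of each entry. The derivation of $\eqref{eq:mod1}$ itself is already in hand via Lemma \ref{perp4} together with the definitions of $\tilde{\ell}_{i,\alpha}$ and $\tilde{N}_{i,\alpha}$, so the real content of the lemma is the matrix inversion.

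First I would estimate the entries of $B_\alpha$ using $\eqref{eq:OmegaBdd}$, the bounds $\eqref{eq:power_rules}$ from Lemma \ref{lem:small_eps_2}, and the smallness assumption $\eqref{eq:usmall}$. Concretely, $\omega_\alpha(\partial_\tau\xi_{1,\alpha},v_\alpha)$ pairs a derivative of the wave profile (of size $\eps^{5/2}$ after an $e^{a\cdot}$ weight, from $\eqref{eq:power_rules}$) against the perturbation $v_\alpha$, and the smallness condition $\eqref{eq:usmall}$ bounds $v_\alpha$ by $\|v_{1,\alpha}\|_{W_\alpha}+\|v_{2,\alpha}\|_\alpha$ in the matching dual weight; the other entries of $B_\alpha$ are treated analogously. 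Combined with Lemma \ref{L6}'s characterization $A_{0,\alpha}\sim \left(\begin{smallmatrix}0 & \eps \\ \eps & \eps^{-2}\end{smallmatrix}\right) + {\tt exp}$, this yields the asymptotic shape of $A_{0,\alpha}-B_\alpha$ claimed in the preamble.

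Next I would invert $A_{0,\alpha}-B_\alpha$ directly from its $2\times 2$ structure. The $(1,1)$ entry is $O(\eps^{5/2}(\|v_{1,\alpha}\|_{W_\alpha}+\|v_{2,\alpha}\|_\alpha))$, the off-diagonals are comparable to $\eps$ (both above and below, by Lemma \ref{L6}), and the $(2,2)$ entry is comparable to $\eps^{-2}$. The determinant is therefore comparable to $-\eps^2$ (the cross-term dominates for $\eps$ small, since $\eps^{5/2}\cdot\eps^{-2}=\eps^{1/2}\ll 1$), yielding invertibility and an inverse of the form $\left(\begin{smallmatrix}\eps^{-4} & \eps^{-1}\\ \eps^{-1} & \eps^{1/2}(\|v_{1,\alpha}\|_{W_\alpha}+\|v_{2,\alpha}\|_\alpha)\end{smallmatrix}\right)$. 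I would then use the block structure of $\hat{A}$: since $A_{1,\pm}$ is \texttt{exp} by Lemma \ref{L6}, a Neumann-series / Schur-complement argument expresses $\hat{A}^{-1}$ as the direct sum of the two block inverses plus an \texttt{exp}-small correction, giving exactly the matrix displayed just before the lemma.

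Finally, multiplying $\hat{A}^{-1}$ against the right-hand side $(N_{1,\alpha}-\tilde{N}_{1,\alpha},N_{2,\alpha}-\tilde{N}_{2,\alpha})$ and reading off the first and second rows produces $\eqref{eq:gammadot}$ and $\eqref{eq:cdot}$ respectively, with the off-diagonal \texttt{exp} terms generating the trailing $\bigl(\sum_{i}|N_{i,-\alpha}-\tilde{N}_{i,-\alpha}|\bigr){\tt exp}$ contributions. The main subtlety I expect is not the algebra but the careful bookkeeping of the $\eps$-asymptotics of the entries of $B_\alpha$ (including showing the off-diagonal of $A_{0,\alpha}-B_\alpha$ remains $\sim \eps$ from both sides, so that the determinant estimate is sharp rather than merely an upper bound); everything else is linear algebra plus the already-established Lemmas \ref{L6} and \ref{lem:small_eps_2}.
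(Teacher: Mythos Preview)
Your proposal is correct and follows essentially the same approach as the paper: the paper's proof is precisely the paragraph preceding the lemma, where the entries of $B_\alpha$ are estimated via $\eqref{eq:power_rules}$, $\eqref{eq:usmall}$, and $\eqref{eq:OmegaBdd}$, the $2\times 2$ block $A_{0,\alpha}-B_\alpha$ is inverted using that its off-diagonals are bounded above and below by multiples of $\eps$, and the exponentially small off-diagonal blocks $A_{1,\pm}$ are handled to obtain the displayed form of $\hat{A}^{-1}$, from which $\eqref{eq:cdot}$ and $\eqref{eq:gammadot}$ are read off. You have even flagged the one point the paper makes explicit, namely that the off-diagonal of $A_{0,\alpha}-B_\alpha$ retains a two-sided $\sim\eps$ bound so that the determinant estimate is sharp.
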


We now refine Lemma $\ref{lem:cdot_gammadot}$ by obtaining quantitative estimates for the quantities which appear on the right hand side in $\eqref{eq:cdot}$ and $\eqref{eq:gammadot}$.
\begin{lemma}[Bounds on the Right Hand Side]
\label{lem:RHS_est_Mizu}
Let $g_\alpha$ be given by $\eqref{eq:gdef}$.
Assume that $\eqref{eq:usmall}$ holds.
Then the following hold
\begin{equation} \label{eq:JgX}
\| (Jg_\alpha)h_\alpha \|_\alpha \le K\eps^2 \|v_{1,\alpha}\|_{W_\alpha} + K(\|v_{1,\alpha}\| + \| v_{2,\alpha} \|)  \|v_{2,\alpha}\|_\alpha)  + {\tt exp} 
\end{equation}

\begin{equation} \label{eq:N1}
|N_{1,\alpha} - \tilde{N}_{1,\alpha}| \le K\eps^3 (\|v_{1,\alpha}\|_{W_\alpha}^2 + \|v_{2,\alpha}\|_\alpha^2 ) + {\tt exp} 
\end{equation}
and
\begin{equation} \label{eq:N2}
|N_{2,\alpha} - \tilde{N}_{2,\alpha}| \le K\left( \eps^{3/2} \|v_{1,\alpha}\|_{W_\alpha} + \eps^{-1/2}(\|v_{1,\alpha}\| + \| v_{2,\alpha} \|)  \|v_{2,\alpha}\|_\alpha) \right) {\tt exp}
\end{equation}
In particular, combining equation $\eqref{eq:cdot}$ and $\eqref{eq:gammadot}$ with equations $\eqref{eq:JgX}$, $\eqref{eq:N1}$, and $\eqref{eq:N2}$ 
we obtain
\begin{equation}
| \dot{c}_\alpha |  \le K (\eps^2 + \|v_{1,\alpha}\| + \|v_{2,\alpha}\|)(\|v_{1,\alpha}\|_{W_\alpha}^2 + \|v_{2,\alpha}\|_\alpha^2)  + {\tt exp}
\label{eq:cdot_RHS}
\end{equation}
and
\begin{equation}
 |\dot{\gamma}_\alpha | \le K \left(  \eps^{1/2} \|v_{1,\alpha}\|_{W_\alpha} + \eps^{-3/2}(\|v_{1,\alpha}\| + \|v_{2,\alpha}\|)\|v_{2,\alpha}\|_\alpha + \eps^{-1}(\|v_{1,\alpha}\|_{W_\alpha}^2 + \|v_{2,\alpha}\|_\alpha^2)\right) + {\tt exp}  
\label{eq:gammadot_RHS}
\end{equation}
 \end{lemma}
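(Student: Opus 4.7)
The plan is to prove $\eqref{eq:JgX}$ first by a careful Taylor expansion of $g_\alpha$, then to deduce $\eqref{eq:N1}$ and $\eqref{eq:N2}$ via the symplectic duality $\eqref{eq:OmegaBdd}$ and the orthogonality condition $\eqref{Perp1}$, and finally to substitute into Lemma \ref{lem:cdot_gammadot}.

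For $\eqref{eq:JgX}$ I would Taylor expand $g_\alpha$ around $u_{c_\alpha}$ and split the terms into four groups: (i) pure wave--wave interaction terms involving products $r_{c_\alpha}^j r_{c_{-\alpha}}^k$ with $k\ge 1$, which become ${\tt exp}$ when multiplied by $h_\alpha$ since $r_{c_{-\alpha}}$ is exponentially small on the $\alpha$-side; (ii) the principal cross term $[V''(r_{c_\alpha})-1]r_{1,\alpha}$, which I bound by $C\eps^2\|v_{1,\alpha}\|_{W_\alpha}$ using the pointwise estimate $|V''(r_{c_\alpha})-1|\le C\eps^2 e^{-\kappa|k-\tau_\alpha|}$ together with the weight inequality $e^{2a(k-\tau_\alpha)}e^{-2\kappa|k-\tau_\alpha|}\le e^{-2\bar a|k-\tau_\alpha|}$, valid since $\kappa\sim\eps$ exceeds $a+\bar a$; (iii) the genuinely nonlinear-in-$v$ remainder, bounded by $C(\|v_{1,\alpha}\|+\|v_{2,\alpha}\|)\|v_{2,\alpha}\|_\alpha$ via an $\ell^\infty$--$\ell^2_\alpha$ H\"older estimate after pulling out $\|r_1\|_\infty$ or $\|r_{2,\alpha}\|_\infty$; and (iv) residual contributions involving $v_{2,-\alpha}$, which vanish pointwise against $h_\alpha$ except near $k=0$, where the $\|\cdot\|_\alpha$-weight factor $e^{-2a\tau_\alpha}$ is ${\tt exp}$.

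For $\eqref{eq:N2}$ I would use $\eqref{Perp1}$ to write $N_{2,\alpha}=\omega_\alpha(\xi_{1,\alpha},v_{1,\alpha})$, then exploit $\xi_{1,\alpha}=-c_\alpha^{-1}JH'(u_{c_\alpha})$ together with $\omega_\alpha(Jw,y)=\langle w,y\rangle$ to rewrite this as $-c_\alpha^{-1}\langle H'(u_{c_\alpha}),v_{1,\alpha}\rangle$; a weighted Cauchy--Schwarz against $\|\cdot\|_{W_\alpha}$ bounds it by $C\eps^{3/2}\|v_{1,\alpha}\|_{W_\alpha}$ using $\kappa>\bar a$ and the $\eps^{3/2}$ scale of $H'(u_{c_\alpha})$ from Lemma \ref{lem:small_eps_2}. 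On the other side $\eqref{eq:OmegaBdd}$ combined with $\|\xi_{2,\alpha}\|_{\ell^2_{-\alpha a}}\sim\eps^{-1/2}$ from Lemma \ref{lem:small_eps_2} gives $|\tilde N_{2,\alpha}|\le C\eps^{-1/2}\|(Jg_\alpha)h_\alpha\|_\alpha$, and inserting $\eqref{eq:JgX}$ yields $\eqref{eq:N2}$.

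For $\eqref{eq:N1}$ the main point is the cancellation produced by the fact that $v_1$ solves $\eqref{FPU}$. Using $\partial_t v_1=JH'(v_1)$ and the definition of $g_\alpha$, a direct computation gives
$$(Jg_\alpha)h_\alpha+L_\alpha v_{1,\alpha}=h_\alpha J\!\left[\tfrac{1}{2}H'''(u_{c_\alpha})v^2+\mathrm{HOT}+H''(u_{c_\alpha})u_{c_{-\alpha}}-H'(u_{c_{-\alpha}})\right]+[h_\alpha,JH''(u_{c_\alpha})]v_1+h_\alpha JH''(u_{c_\alpha})v_{2,-\alpha},$$
in which the linear-in-$v_1$ contributions of the two summands on the left cancel, leaving only the commutator $[h_\alpha,JH''(u_{c_\alpha})]v_1$ supported near $k=0$ (where $\|\cdot\|_\alpha$ has the weight factor $e^{-2a\tau_\alpha}\in{\tt exp}$), together with ${\tt exp}$ interactions involving $u_{c_{-\alpha}}$ and $v_{2,-\alpha}$. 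Pairing this expression with $\xi_{1,\alpha}=-c_\alpha^{-1}JH'(u_{c_\alpha})$ via $\omega_\alpha$ reduces the estimate to bounding $\sum_k|H'(u_{c_\alpha})(k)|\,v^2(k)$; the decay $|H'(u_{c_\alpha})(k)|\le C\eps^2 e^{-\kappa|k-\tau_\alpha|}$ with $\kappa>2\bar a$ produces $\le C\eps^2\|v\|_{W_\alpha}^2$, and splitting $v=v_\alpha+v_{-\alpha}$ puts the $v_{-\alpha}$ contribution into ${\tt exp}$ (since $H'(u_{c_\alpha})$ is pointwise ${\tt exp}$ on the $-\alpha$-side at distance $\sim\tau_\alpha$), yielding the desired $\eps^3(\|v_{1,\alpha}\|_{W_\alpha}^2+\|v_{2,\alpha}\|_\alpha^2)+{\tt exp}$.

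The hard part will be verifying the cancellation for $\eqref{eq:N1}$ cleanly: one must check that the commutator $[h_\alpha,JH''(u_{c_\alpha})]v_1$ is genuinely ${\tt exp}$ (not merely $e^{-\bar a\tau_\alpha}$, which is not small in our regime because $\bar a\tau_\alpha\sim\eps^{1-\eta_1}\to 0$), and that after splitting $v=v_\alpha+v_{-\alpha}$ the $W_\alpha$-norm squared of the relevant piece is controlled by $\|v_{1,\alpha}\|_{W_\alpha}^2+\|v_{2,\alpha}\|_\alpha^2$ modulo ${\tt exp}$. Once the three displayed bounds are in hand, substituting into Lemma \ref{lem:cdot_gammadot} produces $\eqref{eq:cdot_RHS}$ and $\eqref{eq:gammadot_RHS}$ by routine algebra.
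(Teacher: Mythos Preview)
Your approach is essentially the same as the paper's: decompose $g_\alpha$ into the linear-in-$v_{1,\alpha}$ piece $(H''(u_{c_\alpha})-1)v_{1,\alpha}$, the quadratic-in-$v$ piece, and the cross-wave pieces that become ${\tt exp}$ after multiplying by $h_\alpha$; handle the commutator $[J,h_\alpha]$ as a term supported at $k\in\{-1,0\}$ with weight $e^{-a\alpha\tau_\alpha}={\tt exp}$; and for $N_{1,\alpha}-\tilde N_{1,\alpha}$ exploit the cancellation of the linear-in-$v_1$ contribution so that only a quadratic remainder survives, which one pairs against $\xi_{1,\alpha}$ pointwise using $|\xi_{1,\alpha}(k)|\le C\eps^3 e^{-\kappa|k-\tau_\alpha|}$ with $\kappa>2a$.

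One genuine slip: for $\tilde N_{2,\alpha}=\omega_\alpha(\xi_{2,\alpha},(Jg_\alpha)h_\alpha)$ you invoke $\eqref{eq:OmegaBdd}$, but that inequality carries a prefactor $(1-e^{-a})^{-1}\sim\eps^{-1}$ and would yield $|\tilde N_{2,\alpha}|\lesssim\eps^{-3/2}\|(Jg_\alpha)h_\alpha\|_\alpha$, one power of $\eps$ worse than claimed. The paper avoids this by first writing $(Jg_\alpha)h_\alpha=J(g_\alpha h_\alpha)+[J,h_\alpha]g_\alpha$ and then using the exact identity $\omega_\alpha(\xi_{2,\alpha},Jy)=-\langle y,\xi_{2,\alpha}\rangle$ (the same identity you already use for $N_{2,\alpha}$), so that the bound comes from the ordinary $\ell^2_{\alpha a}/\ell^2_{-\alpha a}$ Cauchy--Schwarz pairing with no $a^{-1}$ loss: $|\tilde N_{2,\alpha}|\le\|\xi_{2,\alpha}\|_{\ell^2_{-\alpha a}}\|g_\alpha h_\alpha\|_\alpha+{\tt exp}\sim\eps^{-1/2}\|g_\alpha h_\alpha\|_\alpha$. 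With that correction your argument goes through. Your worry about the commutator being merely $e^{-\bar a\tau_\alpha}$ is unfounded: the commutator is supported at $k\in\{-1,0,1\}$, and the $\|\cdot\|_\alpha$-weight there is $e^{-a\alpha\tau_\alpha}$ with $a\sim\eps$ (not $\bar a\sim\eps^2$), which is ${\tt exp}$ in the paper's sense.
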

\begin{proof}

We first estimate $g_\alpha h_\alpha$.

Let $G(u,v) := H'(u + v) - H'(u) - H''(u)v$ so that 

\begin{equation} \ba{llll}
g_\alpha & = & G(u_{c_\alpha},v_\alpha) - G(0,v_\alpha) & (I) \\ \\
& & + G(v_{1\alpha},v_{2\alpha}) + (H''(v_{1\alpha})-1)v_{2\alpha} & (II) \\ \\
& & + (H''(u_{c_\alpha}) - 1)v_{1\alpha} & (III) \\ \\
& & + -H'(u_{c_{-\alpha}}) + H'(v_{1,\alpha}) - H'(v_1) + H'(u_{c_+} + u_{c_-} + v) - H'(u_{c_\alpha} + v_\alpha) & (IV) 
\ea
\end{equation}
Since $H$ is smooth, term $(I)$ is order $u_{c_\alpha} v_\alpha^2$.  Since $H''(0) = \mathrm{Id}$, term $(II)$ is order $|v_{2,\alpha}|(|v_{1,\alpha}| + |v_{2,\alpha}|)$ and term $(III)$ is order $v_{1,\alpha} u_{c_\alpha}$.  Term $(IV)$ becomes exponentially small in the weighted norm upon multiplying by $h_\alpha$.

We compute the terms $(I)$ - $(IV)$ in turn.
$$ \ba{lll}
\| I \|^2_\alpha & \le & K \sum_{k \in \Z} e^{2a(k - \tau_\alpha)} u_{c_\alpha}(k-\tau_\alpha)^2v_{\alpha}(k)^4 \\ \\
& \le & K\eps^4 \|v_\alpha\|_{\ell^\infty}^2 \sum_{k \in \Z} e^{-2(\kappa-a)|k - \tau_\alpha|} v_\alpha(k)^2 \\ \\
& \le & K\eps^4 \|v_\alpha\|_{\ell^\infty}^2 \left( \|v_{1,\alpha}\|_{W_\alpha}^2 + \|v_{2,\alpha}\|_\alpha^2 \right) \\ \\ \\
\| II \|_\alpha^2 & \le & K\sum_{k \in \Z} e^{-2\kappa |k - \tau_\alpha|} v_{2,\alpha}(k)^2(v_{1,\alpha}(k) + v_{2,\alpha}(k))^2 \\ \\
& \le & K(\|v_{1,\alpha}\| + \|v_{2,\alpha}\|)^2 \|v_{2,\alpha}\|^2_\alpha,  \\ \\
\| III \|_\alpha & \le & K\eps^2 \|v_{1,\alpha}\|_{W_\alpha} \\ \\
\| IV h_\alpha\|_\alpha  & = & {\tt exp}.
\ea
$$
In estimating $\| I \|_\alpha^2$ we have used the fact that $\kappa > 2a$.  In light of the fact that the $\alpha$ norm dominates the $W_\alpha$ norm, the contribution from $\| I \|_\alpha$ is dominated by the contributions from $\| II \|_\alpha$ and $\| III \|_\alpha$.  In particular
\begin{equation}
\| g_\alpha h_\alpha \|_\alpha \le K\left( \eps^2 \|v_{1,\alpha}\|_{W_\alpha} + (\|v_{1,\alpha}\| + \| v_{2,\alpha} \|)  \|v_{2,\alpha}\|_\alpha) + {\tt exp} \right). 
\label{eq:galpha_bound}
\end{equation}

Observe that 
\begin{equation}
(Jg_\alpha)h_\alpha = J(g_\alpha h_\alpha) + [J,h_\alpha]g_\alpha
\label{eq:bracket}
\end{equation}
where $[\cdot,\cdot]$ is the Lie bracket.  Thus
$\| (Jg_\alpha)h_\alpha \|_\alpha \le K \|g_\alpha h_\alpha\|_\alpha + \|[J,h_\alpha] g_\alpha\|_\alpha.$   Here we have used the fact that $J$ is a bounded operator.  We first examine the boundary terms.  For any vector $x$, the bracket $[J,h_\alpha]x$ is localized at the lattice sites $-1$ and $0$.  Thus in the weighted norm we have 
$$\|[J,h_\alpha]x\|_\alpha \le e^{-a \alpha \tau_\alpha} \|x\|_{\ell^\infty} \le e^{-a \alpha \tau_\alpha} \|x\|$$
and in particular $$\|[J,h_\alpha]g_\alpha\|_\alpha \le Ke^{-a \alpha \tau_\alpha} (1 + \|v\|) = {\tt exp}.$$
Here the $\|v\|$ term comes from terms $(I)$ - $(III)$ and the constant term comes from $(IV)$.  Thus $\eqref{eq:JgX}$ follows from $\eqref{eq:galpha_bound}$.

Moreover, in light of the facts that $\omega_\alpha(x,Jy) = - \langle y, x \rangle$ and $[J,h_\alpha]g_\alpha = {\tt exp}$ we can use the Cauchy-Schwartz inequality to obtain
$\tilde{N}_{2,\alpha} \le \eps^{-1/2} \|g_\alpha h_\alpha\|_\alpha + {\tt exp}$.  Similarly, we can use $\eqref{Perp1}$ and $\eqref{eq:power_rules}$ to see that $$N_{2,\alpha} = \omega_\alpha(\xi_{1,\alpha},v_{1,\alpha}) = \langle H'(u_{c_\alpha}), v_{1,\alpha} \rangle \le C\eps^{3/2} \|v_{1,\alpha}\|_{W_\alpha}.$$  Equation $\eqref{eq:N2}$ now follows from $\eqref{eq:galpha_bound}$.

We now estimate $N_{1,\alpha} - \tilde{N}_{1,\alpha} = - \omega_\alpha(\xi_{1,\alpha}, (Jg_\alpha)h_\alpha + L_\alpha v_{1,\alpha})$.
Define 
$$
\ba{llll}
\hat{g}_\alpha & := & 
H'(u_{c_+} + u_{c_-} + v) - H'(u_{c_+}) - H'(u_{c_-}) - H''(u_{c_\alpha})v \\ \\ 
& = & H'(u_{c_\alpha} + v_\alpha) - H'(u_{c_\alpha}) - H''(u_{c_\alpha})v_\alpha & (I) \\ \\
& & +  H'(u_{c_+} + u_{c_-} + v) - H'(u_{c_\alpha} + v_\alpha) - H'(u_{c_{-\alpha}}) - H''(u_{c_\alpha})v_{-\alpha}& (II)
\ea
$$
and
$$\tilde{g}_\alpha := H'(v_{1,\alpha}) - H''(u_{c_\alpha})v_{1,\alpha} \qquad (III)$$ so that $(Jg_\alpha)h_\alpha + L_\alpha v_{1,\alpha} = (J\hat{g}_\alpha)h_\alpha + (J\tilde{g})h_{-\alpha}$.  

Observe that $\| (II) h_\alpha\|_\alpha = {\tt exp}$ and $\tilde{g}h_{-\alpha} = 0$.  Since multiplication by $J$ and multiplication by $h_\alpha$ commute up to error terms at the lattice sites $0$ and $1$, the contribution of terms $(II)$ and $(III)$ to $\|(J\hat{g}_\alpha)h_\alpha + (J\tilde{g})h_{-\alpha}\|_\alpha$ is localized at the lattice sites $k \in \{-1,0,1\}$.  Thus this localized error term is exponentially small in the weighted norm.  Upon taking the symplectic inner product with $\xi_{1,\alpha}$ we see that the contribution of $(II)$ and $(III)$ to $N_{1,\alpha} - \tilde{N}_{1,\alpha}$ is ${\tt exp}$.

Since $H$ is smooth we see that $(I)(k,t) \le K v_\alpha(k,t)^2$ where $K$ may be chosen to be independent of $k$, $t$, and $\eps$ and to be increasing in $c_\alpha$ and $\| v_\alpha \|$, thus may be taken to be constant as we have assumed $c_\alpha$ and $\|v_\alpha\|$ to be bounded above.  In particular,
$$ \ba{lll}
\langle (I), \xi_{1,\alpha} \rangle & \le & K \eps^3 \sum_{k \in \Z} e^{-\kappa | k - \tau_\alpha | } v_\alpha(k)^2 \\ \\
& \le & K\eps^3 \sum_{k \in \Z} \left(e^{-a |k -\tau_\alpha|} v_{1,\alpha}(k) + e^{-a(k-\tau_\alpha)} v_{2,\alpha}(k)\right)^2 \\ \\ 
& \le & K\eps^3 (\| v_{1,\alpha} \|_{W_\alpha}^2 + \| v_{2,\alpha} \|_\alpha^2) \ea $$

In the second line we have used that $\kappa > 2a$.  In the last line we have used Young's inequality.  We may once again use the fact that $\omega_\alpha(x,Jy) = -\langle y, x \rangle$ together with the fact that $(J\hat{g}_\alpha)h_\alpha = J(\hat{g}_\alpha h_\alpha) + {\tt exp}$ to see that the contribution of term (I) to $|N_{1,\alpha} - \tilde{N}_{1,\alpha}|$ is order $\eps^3 (\|v_{1,\alpha}\|_{W_\alpha} + \|v_{2,\alpha}\|_\alpha)^2$.  
Combining these contributions we obtain $\eqref{eq:N1}$

This completes the proof.

\end{proof}

\section{Stability and the Proof of Theorem $\ref{thm:Orbital_Stability}$}

This section is concerned with controlling the evolution of $\|v(t)\|$.  
Lemma $\ref{Lem:Virial}$ controls $\|v_1\|_{W_\alpha}$, Lemma $\ref{apriori2}$ controls $\| v(t) \|$ in terms of $|c(t) - c(t_0)|$ and $\|v(t_0)\|$, and Theorem $\ref{T5}$ uses these bounds to control $|c(t) - c(t_0)|$ and thus $\| v(t) \|$.

We begin by controlling $\|v_1\|_{W_\alpha}$.  The following Lemma is a version of Lemma 9 in \cite{Mizumachi} where it was adapted from \cite{MM1}.  The only difference is that we pay attention to the small $\eps$ asymptotics of the constants and exponents.

\begin{lemma} \label{Lem:Virial}
Let $\bar{a} > 0$ be given.  Let $\theta : \R \to (0,1)$ be $C^2$ smooth and  assume that $A := \sup_{x \in \R} \sup_{0 < \delta < \bar{a}} \frac{\theta''(x+\delta)}{\theta'(x)} < \infty$.  
Then there are constants $C_0$, $C_1$, and $C_2$ which may be chosen independently of $\bar{a}$ such that 
whenever $\tau : \R \to \R$ satisfies $\dot{\tau}  > 1 + C_0 \bar{a}$, and $v$ is a solution of $\eqref{FPU}$ with $\| v(t_0) \| < C_1 \bar{a}$ then for any $t > t_0$ we have 
$$(C_2 / \bar{a}) \sum_{j \in \Z} \psi(j,t) |v(j,t)|^2 + \int_{t_0}^t \sum_{j \in \Z} \psi'(j,s) |v(j,s)|^2 dt \le (C_2 / \bar{a}) \sum_{j \in \Z} \psi(j,t_0) |v(j,t_0)|^2$$
where $\psi(x,t) = \theta(\bar{a} (x-\tau(t)))$, and $\psi'(x,t) = \bar{a} \theta'(\bar{a}(x-\tau(t)))$.
In particular, choosing $\theta(x) = 1 + \tanh(x)$, and $\bar{a} < < \eps^2$ we obtain 

\begin{equation}
2C\eps^{-4} \| \psi(t)^{1/2}v_1(t) \|^2 + \int_{t_0}^t \| v_1(s) \|_{W_\alpha(s)}^2 ds \le C \eps^{-4} \|v_1(t_0)\|^2
\label{eq:virial}
\end{equation}
\end{lemma}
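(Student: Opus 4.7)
The plan is to track a modified weighted energy that uses the Hamiltonian density $2V(r_j)+p_j^2$ in place of $|v_j|^2$, differentiate it along the FPU flow, and exploit a discrete summation-by-parts to identify the virial cancellation. Specifically, I would define
\[
\mathcal{F}(t) \;:=\; \sum_{j\in\Z} \psi(j,t)\bigl(2V(r_j(t)) + p_j(t)^2\bigr).
\]
Conservation of the full FPU Hamiltonian together with the smallness hypothesis $\|v(t_0)\| < C_1\bar a$ keeps $\|v(t)\|_{\ell^\infty} \le \|v(t)\| \lesssim \bar a$ for all $t\ge t_0$, and since $V(r)=\tfrac12 r^2+O(r^3)$, this forces $\mathcal{F}(t) = (1+O(\bar a))\sum_j \psi(j,t)|v(j,t)|^2$. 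So proving the inequality for $\mathcal{F}$ is equivalent, up to absolute multiplicative constants close to $1$, to proving it for the stated energy $\sum_j \psi|v|^2$, and the freedom in the constant $C_2$ lets us absorb these factors.

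Next I would differentiate $\mathcal{F}$ in time. Using $\dot\psi(j,t)=-\dot\tau(t)\psi'(j,t)$ and the FPU equations $\dot r_j=p_{j+1}-p_j$, $\dot p_j=V'(r_j)-V'(r_{j-1})$, a discrete summation by parts yields
\[
\frac{d}{dt}\mathcal{F}(t) \;=\; -\dot\tau(t)\sum_j \psi'(j,t)\bigl(2V(r_j)+p_j^2\bigr) \;+\; 2\sum_j (\psi_{j-1}-\psi_j)\,V'(r_{j-1})\,p_j.
\]
The second sum is the virial term. The hypothesis on $A$ implies $|\psi_j-\psi_{j-1}| = |\int_{j-1}^j \psi'(x,t)\,dx| \le (1+C\bar a A)\,\psi'(j,t)$, and a shift of index controls $\psi'(j,t)r_{j-1}^2$ by $(1+C\bar a A)\psi'(j,t)r_j^2$. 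Combining these with Cauchy--Schwartz, $V'(r)=r+O(\|r\|_{\ell^\infty}|r|)$, and the smallness of $\|r\|_{\ell^\infty}$, the virial sum is bounded by $(1+C\bar a)\sum_j \psi'(j,t)|v_j|^2$. Similarly the first term equals $-\dot\tau(1+O(\bar a))\sum_j \psi'|v|^2$.

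Putting these estimates together with the hypothesis $\dot\tau>1+C_0\bar a$, I obtain
\[
\frac{d}{dt}\mathcal{F}(t) \;\le\; -\bigl(\dot\tau - 1 - C''\bar a\bigr)\sum_j \psi'(j,t)|v(j,t)|^2 \;\le\; -\bar a\,\sum_j \psi'(j,t)|v(j,t)|^2,
\]
provided $C_0$ is chosen larger than $C''+1$. Integrating from $t_0$ to $t$ and converting $\mathcal{F}$ back into $\sum_j \psi|v|^2$ through the $1+O(\bar a)$ correction gives the stated inequality with an absolute constant $C_2$ (any $C_2\ge 2$ works for $\bar a$ small enough). The specialization to $\theta(x)=1+\tanh(x)$ is then immediate: one checks that $A=\sup \theta''/\theta'$ is finite (since $\theta''/\theta'=-2\tanh$), the wavespeeds in the application satisfy $\dot\tau_\alpha\sim 1+\eps^2\gg 1+C_0\bar a$ because $\bar a\ll \eps^2$, and $1/\bar a\sim \eps^{-4}$ for the scaling used in \eqref{eq:virial}.

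The main obstacle is exactly the bookkeeping that makes $C_0,C_1,C_2$ independent of $\bar a$: one needs the nonlinear error from $V'(r)-r$, the error from $|\psi_j-\psi_{j-1}|-\psi'(j,t)$, and the error from replacing $2V(r_j)+p_j^2$ by $|v_j|^2$ to all be of size $O(\bar a)$ (times the right quantities), so that they can be absorbed by the margin in $\dot\tau-1$. The smallness hypothesis $\|v(t_0)\|<C_1\bar a$ and energy conservation are what make all three of these errors genuinely of order $\bar a$.
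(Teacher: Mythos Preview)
Your argument is essentially the paper's own: both differentiate the weighted Hamiltonian density $\sum_j \psi_j(\tfrac12 p_j^2+V(r_j))$, sum by parts to isolate the flux term $\sum_j p_j V'(r_{j-1})(\psi_j-\psi_{j-1})$, replace $V$ and $V'$ by their quadratic/linear parts using the $O(\bar a)$ smallness of $\|v\|_{\ell^\infty}$ coming from Hamiltonian conservation, control $(\psi_j-\psi_{j-1})/\psi'_j$ via the mean value theorem and the hypothesis on $A$, and then absorb everything into the margin $\dot\tau-1>C_0\bar a$. One small slip in your last paragraph: the factor $\eps^{-4}$ in \eqref{eq:virial} is $1/\bar a^2$, not $1/\bar a$ --- one power comes from the $C_2/\bar a$ in the general inequality and the second from $\psi'=\bar a\,\theta'$ when you rewrite $\sum_j\psi'|v|^2$ as $\bar a\,\|v\|_{W_\alpha}^2$; this is consistent with $\bar a\sim\eps^2$.
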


\begin{rmk} The choice $\bar{a} < < \eps^2$ is necessary for the result to be valid when $\dot{\tau} - 1 \sim \eps^2$.
\end{rmk}

\begin{proof}
Sum by parts to obtain 
$$ \frac{d}{dt} \sum_{j \in \Z} \left( \frac{1}{2}p_j^2 + V(r_j)\right) \psi_j = 
-\sum_{j \in \Z} p_j V'(r_{j-1}) (\psi_j - \psi_{j-1}) - \dot{\tau}(\frac{1}{2}p_j^2 + V(r_j))\psi_j' .$$
Thus we may write the right hand side of the above as 
$$RHS = \sum_{j \in Z} \frac{-\dot{\tau}}{2}(p_j^2 + r_j^2) \psi_j' - p_j r_{j-1}(\psi_j - \psi_{j-1}) - p_j(V'(r_{j-1}) - r_{j-1})(\psi_j - \psi_{j-1}) +  (V(r_j) - \frac{1}{2}r_j^2)\psi_j' $$

In light of the convexity of $V$ and the conservation of the Hamiltonian, there is a constant $C$ which may be chosen uniformly  such that
 $$|V(r) - \frac{1}{2}r^2| + |V'(r) - r| \le C\|v(t_0)\| r^2 \le C \bar{a} \frac{1}{2} r^2 \qquad \mbox{ for } \qquad \|v(t_0)\| < C_1 \bar{a}$$ 
holds.  Here we are using $\|v(t)\|$ as a cheap bound for $r_j(t)$ and using the conservation and convexity of the Hamiltonian to replace $\|v(t)\|$ with $\|v(t_0)\|$.  We have also absorbed the constant $C_1$ into the constant $C$.  Now we have 
 $$ \ba{lll} RHS & \le & \frac{1}{2}\sum_{j \in \Z} -(\dot{\tau} - C\bar{a})(p_j^2 + r_j^2)\psi_j' + (1 + C\bar{a})|p_j r_{j-1}(\psi_j - \psi_{j-1}) \\ \\ $$
& \le & \frac{1}{2}\sum_{j \in \Z} -(\dot{\tau} - C\eps^2)(p_j^2 + r_j^2)\psi_j' + (1 + C\bar{a} )(\frac{\psi_j - \psi_{j-1}}{\psi_j'} \frac{1}{2}p_j^2\psi_j') + C\bar{a} )(\frac{\psi_{j+1} - \psi_j}{\psi_j'} \frac{1}{2}r_j^2\psi_j') \\ \\ 
& \le & -\frac{1}{2} \sum_{j \in Z} (\dot{\tau} - 1 - \bar{a}(1 + A + C(1 + \bar{a}A)) (p_j^2 + r_j^2)\psi_j' \\ \\
& \le & - C_2 \bar{a} \sum_{j \in \Z} (p_j^2 + r_j^2) \psi_j'
\ea $$
In the second line we have used Young's inequality and multiplied and divided by $\psi_j'$.  In the third line we have used the mean value theorem and the definition of $A$.  In the last line we have used the assumption $\dot{\tau} > 1  + C_0 \bar{a}$ and have chosen $C_0$ large enough.

Now integrate and use the fact that $V(r) > r$ for $r$ small to obtain the result.

\end{proof}

The following energy estimate is a consequence of the convexity of the Hamiltonian.

\begin{lemma} \label{apriori2}
Let $t_0$ and $t_1$ be real numbers.  Denote $m = \min\{t_0,t_1\}$ and $M = \max\{t_0,t_1\}$.  Suppose that $u = u_{c_+} + u_{c_-} + v$ solves $\eqref{FPU}$ for for $t \in [m,M]$.  Suppose also that there are wave speeds and phases $c(t_i)$ and $\tau(t_i)$ such that $\omega_\alpha(\xi_{i,\alpha}(\tau_\alpha(t_i),c_\alpha(t_i)),v_\alpha(t_i)) = 0$ for i = $0,1$.  

Then there are positive constants $\delta_1$ and $K$ such that whenever $\|v(t_i)\| < \delta_1$ for $i = 0,1$ then
\begin{equation}
\|v(t_1)\|^2 \le K \left[ \|v(t_0)\|^2 + \sum_{\alpha \in \{+,-\}} \eps |c_\alpha(t_1) - c_\alpha (t_0)| + 
e^{-\kappa \alpha \tau_\alpha(t_0)} (1 + \|v_\alpha(t_0)\|) + e^{-\kappa \alpha \tau_\alpha(t_1)} (1 + \|v_\alpha(t_1)\|) \right]
\label{apriori1}
\end{equation}
\end{lemma}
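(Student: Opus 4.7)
The strategy is a Lyapunov argument using the Hamiltonian as a conserved energy.  Define $\hat u(t) := u_{c_+(t)}(\cdot-\tau_+(t))+u_{c_-(t)}(\cdot-\tau_-(t))$ and Taylor expand $H(\hat u+v)$ to second order around $\hat u$.  Conservation of $H$ along \eqref{FPU} then gives
\begin{align*}
\tfrac12\langle v(t_1),H''(\hat u(t_1))v(t_1)\rangle
&= \tfrac12\langle v(t_0),H''(\hat u(t_0))v(t_0)\rangle + [H(\hat u(t_0))-H(\hat u(t_1))] \\
&\quad + [\langle H'(\hat u(t_0)),v(t_0)\rangle - \langle H'(\hat u(t_1)),v(t_1)\rangle] + R(t_0)-R(t_1),
\end{align*}
where the cubic-and-higher remainder satisfies $|R(t_i)|\le C\|v(t_i)\|^3$ since $V\in C^4$.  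The plan is to bound each piece on the right, use pointwise coercivity of $H''(\hat u)$ to extract $\|v(t_1)\|^2$ from the left, and absorb the cubic remainder for $\|v\|$ small.

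For the quadratic forms: since $H''(r,p)=\mathrm{diag}(V''(r),1)$ and $\|r_{c_\pm}\|_{\ell^\infty}=O(\eps^2)$ by Lemma~\ref{lem:small_eps_2}, pointwise $V''(r_{c_+}+r_{c_-})\ge 1-C\eps^2 \ge \tfrac12$, giving $\tfrac14\|v(t_1)\|^2\le\tfrac12\langle v(t_1),H''(\hat u(t_1))v(t_1)\rangle$ and $\tfrac12\langle v(t_0),H''(\hat u(t_0))v(t_0)\rangle\le C\|v(t_0)\|^2$.  For the zeroth-order difference I would split $H(\hat u(t_i))=H(u_{c_+(t_i)})+H(u_{c_-(t_i)})+\mathcal I(t_i)$ where $\mathcal I$ collects cross terms from $V(r_{c_+}+r_{c_-})-V(r_{c_+})-V(r_{c_-})$ and is of the form $\text{exp}$ by separation of supports; by (P3), $|H(u_{c_\alpha(t_1)})-H(u_{c_\alpha(t_0)})|\le C\eps|c_\alpha(t_1)-c_\alpha(t_0)|$.

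The crucial cancellation is in the first-order terms, where the orthogonality hypothesis enters.  Since $u_c(\cdot-ct)$ solves \eqref{FPU}, $\xi_{1,\alpha}=-c_\alpha^{-1}JH'(u_{c_\alpha})$, and using $\omega_\alpha(Jz,y)=\langle z,y\rangle$ the assumption $\omega_\alpha(\xi_{1,\alpha},v_\alpha)=0$ becomes $\langle H'(u_{c_\alpha}),v_\alpha\rangle=0$.  Writing $H'(\hat u)=H'(u_{c_+})+H'(u_{c_-})+\Delta$ with $\Delta$ capturing the nonlinear cross terms of $V'(r_{c_+}+r_{c_-})$, and splitting $v=v_++v_-$, one has
\begin{equation*}
\langle H'(\hat u),v\rangle = \sum_\alpha\langle H'(u_{c_\alpha}),v_\alpha\rangle + \sum_\alpha\langle H'(u_{c_\alpha}),v_{-\alpha}\rangle + \langle\Delta,v\rangle.
\end{equation*}
The first sum vanishes by orthogonality; each remaining cross term is bounded by $Ce^{-\kappa\alpha\tau_\alpha}(1+\|v\|)$ via Cauchy--Schwartz, since $H'(u_{c_\alpha})$ decays like $e^{-\kappa|k-\tau_\alpha|}$ away from $\tau_\alpha$ and $v_{-\alpha}$ (resp.\ the effective support of $\Delta$) is separated from $\tau_\alpha$ by at least $|\tau_\alpha|$.

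Assembling everything yields $\tfrac14\|v(t_1)\|^2 \le C(\|v(t_0)\|^2 + \eps\sum_\alpha |c_\alpha(t_1)-c_\alpha(t_0)|) + \text{exp terms of the stated form} + C\delta_1\|v(t_1)\|^2$, where the last summand comes from the cubic remainder and the hypothesis $\|v(t_i)\|<\delta_1$.  Choosing $\delta_1$ small enough to absorb $C\delta_1\|v(t_1)\|^2$ into the LHS gives \eqref{apriori1}.  The main obstacle I expect is the careful bookkeeping of the various exponentially small interaction terms (from $\mathcal I$, from $\Delta$, and from the off-diagonal inner products) so that they collectively fit the stated form $e^{-\kappa\alpha\tau_\alpha(t_i)}(1+\|v_\alpha(t_i)\|)$, together with ensuring uniformity of the constants in $\eps$; the opposite signs of $\tau_+>0$ and $\tau_-<0$ in the regimes of interest make $e^{-\kappa\alpha\tau_\alpha}$ genuinely decaying.
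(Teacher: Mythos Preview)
Your proposal is correct and follows essentially the same route as the paper's proof: both exploit conservation of $H$, use convexity/coercivity of $H''(\hat u)$ to control $\|v(t_1)\|^2$, invoke $\frac{dH(u_c)}{dc}\sim\eps$ (property~\textbf{P3}) for the $|c_\alpha(t_1)-c_\alpha(t_0)|$ contribution, and use the orthogonality $\omega_\alpha(\xi_{1,\alpha},v_\alpha)=0$ rewritten as $\langle H'(u_{c_\alpha}),v_\alpha\rangle=0$ to reduce the linear term to exponentially small cross contributions. The only cosmetic differences are that the paper packages the quadratic-plus-remainder step as a two-sided convexity inequality rather than an explicit Taylor expansion with cubic remainder, and that it localizes the linear term via $\sum_\alpha\langle (H'(\hat u)-H'(u_{c_\alpha}))h_\alpha,v_\alpha\rangle$ instead of your $H'(u_{c_+})+H'(u_{c_-})+\Delta$ splitting; both lead to the same exponential bounds.
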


\begin{proof}
Since the Hamiltonian is convex in a neighborhood of $u_{c_+} + u_{c_-}$, there is a $\delta_1 > 0$, and constants $K_-$ and $K_+$ such that 
\begin{equation}
\frac{1}{2} K_- \|x\|^2 \le H(u_{c_+} + u_{c_-} + x) - H(u_{c_+} + u_{c_-}) - H'(u_{c_+} + u_{c_-}) x \le K_+ \|x\|^2
\label{eq:convex}
\end{equation}
whenever $\|x\| < \delta_1$.
Thus we may write
\begin{equation}
\ba{lllr}  \frac{1}{2}K_- \|v(t_1)\|^2 & \le & \left[H(u_{c_+} + u_{c_-} + v) - H(u_{c_+} + u_{c_-}) - <H'(u_{c_+} + u_{c_-}),v >\right|_{t_1} \\ \\
& = & \left[H(u_{c_+} + u_{c_-} + v) - H(u_{c_+} + u_{c_-}) - <H'(u_{c_+} + u_{c_-}),v>\right|_{t_0} & (I) \\ \\
& & + \left.H(u_{c_+} + u_{c_-}+v)\right|_{t_1} - \left.H(u_{c_+} + u_{c_-} + v)\right|_{t_0} & (II) \\ \\
& & + \left.H(u_{c_+} + u_{c_-})\right|_{t_0} - \left.H(u_{c_+} + u_{c_-})\right|_{t_1} & (III) \\ \\
& & \left.\left<H'(u_{c_+} + u_{c_-}),v\right>\right|_{t_0} - \left.\left<H'(u_{c_+} + u_{c_-}),v\right>\right|_{t_1} & (IV) \\ \\
\ea
\end{equation}

We immediately see that $\|(I)\| \le K\|v(t_0)\|^2$ from $\eqref{eq:convex}$ and the fact that $\|v\| < \delta_1$; we also see that $(II) = 0$ because the Hamiltonian is conserved along solutions.  To estimate $(III)$ we write 
$$ \ba{lllr}
H(u_{c_+} + u_{c_-}) & = & 
\sum_{\alpha \in \{+,-\}} (H(u_{c_+} + u_{c_-}) - H(u_{c_\alpha}))h_\alpha & (i) \\ \\
& & + \sum_{\alpha \in \{+,-\}} (H(u_{c_\alpha}) - H(\tilde{u}_\alpha))h_\alpha & (ii) \\ \\
& & + \sum_{\alpha \in \{+,-\}} H(\tilde{u}_\alpha) & (iii) \\ \\
& & + \sum_{\alpha \in \{+,-\}} H(\tilde{u}_\alpha)h_{-\alpha} &(iv)
\ea $$
where $\tilde{u}_\alpha(k,t) := u_{c_\alpha(t_0)}(k - \tau_\alpha(t))$.
It is a consequence of the mean value theorem that
$\| (i) \| \le K \sum_{\alpha \in \{+,-\}} \| u_{c_{-\alpha}} h_\alpha \| \le K (e^{-\kappa \tau_+} + e^{\kappa \tau_-})$.  From Lemma 9.1 in \cite{PF1} we know that $\frac{d H(u_c)}{dc} \sim \eps$, thus by the mean value theorem we see that $\| (ii) \|  \le K\eps \sum_{\alpha \in \{+,-\}} |c_\alpha(t) - c_\alpha(t_0)|$.  Since the Hamiltonian is translation invariant, we may replace  $(iii)$ with $H(u_{c_\alpha(t_0)}(k-c_\alpha(t_0)t))$ which is constant in time, thus does not contribute to $(III)$.  Term $(iv)$ is exponentially small because $u_\alpha h_\alpha$ is exponentially small and $H(0) = 0$.  Thus we obtain 
$$\| (III) \|  \le \sum_{\alpha \in \{+,-\} }e^{- \kappa \alpha \tau_\alpha(t_1)} + e^{- \kappa \alpha \tau_\alpha(t_0)} + \eps |c_\alpha(t_1) - c_\alpha(t_0)|.$$

To estimate $(IV)$ we write
$$\langle H'(u_{c_+} + u_{c_-}) , v \rangle = \sum_{\alpha \in \{+,-\} } \left\langle (H'(u_{c_+} + u_{c_-} ) - H'(u_{c_\alpha}) )h_\alpha, v_\alpha \right\rangle \le K ( e^{-\kappa \tau_+} \|v_+\| + e^{\kappa \tau_-}\|v_-\|).$$

Here we have used the mean value theorem and the fact that $\eqref{Perp1}$ may be written as $\langle H'(u_{c_\alpha}), v_\alpha \rangle = 0$.

Summing the bounds $I$ through $IV$ we obtain
$$
\|v(t_1)\|^2 \le K \left[ \|v(t_0)\|^2 + \sum_{\alpha \in \{+,-\}} \eps \;|c_\alpha(t_1) - c_\alpha (t_0)| + 
e^{-\kappa \alpha \tau_\alpha(t_0)} (1 + \|v_\alpha(t_0)\|) + e^{-\kappa \alpha \tau_\alpha(t_1)} (1 + \|v_\alpha(t_1)\|)
\right]
$$
as desired.
\end{proof}

We now study the semigroup associated with the linear variational problem about a solitary wave in the Toda lattice.  

\begin{prop} \label{prop:Toda_sg}
Let $V$ be the Toda potential $V(x) = e^{x} - 1- x$.  Fix $c_* > 1$.  Then there are positive constants $K$, $b$, and $a$ such that any solution $w$ of the linear equation
\begin{equation}
\partial_t w = JH''(u_c) w 
\label{eq:Toda_lin}
\end{equation}
 which also satisfies the orthogonality condition
\begin{equation}
 \langle J^{-1} \partial_x u_{c^*}, w \rangle = \langle J^{-1} \partial_c u_c |_{c = c^*}, w \rangle = 0
\label{eq:toda_perp}
\end{equation}
necessarily satsifies the decay estimate
\begin{equation}
\| e^{a(\cdot - ct)} w(t) \| \le Ke^{-b(t-t_0)} \|e^{a(\cdot - ct_0)} w(t_0)\|
\label{eq:decay_toda}
\end{equation}
Moreover the constant $K$ can be chosen uniformly in $c$ (and hence $\eps$) while the constants $a$ and $b$ satisfy $a = \mathcal{O}(\eps)$ and $b = \mathcal{O}(\eps^3)$ in the small $\eps$ regime for the scaling $\eqref{eq:KdV_approx}$.
\end{prop}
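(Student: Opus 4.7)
The plan is to reduce the problem to a spectral analysis of the conjugated generator on unweighted $\ell^2$, exploit the complete integrability of the Toda lattice to locate the essential spectrum strictly to the left of the imaginary axis, remove the two neutral eigenvalues via the orthogonality conditions~\eqref{eq:toda_perp}, and finally track the $\eps$-dependence of the spectral gap through the KdV scaling limit. This is essentially the strategy of Friesecke--Pego \cite{PF4}; our task is to make the constants $a$ and $b$ explicit in $\eps$.

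First I would change frames. Let $T_c$ denote the unitary map from $\ell^2_{a}$ to $\ell^2$ given by multiplication by $e^{a(\cdot - ct)}$, and write $\tilde{w}(t) := T_c w(t)$. A direct computation shows that $\tilde{w}$ satisfies an autonomous equation $\partial_t \tilde{w} = \mathcal{A}_{c,a}\tilde{w}$, where $\mathcal{A}_{c,a}$ is the conjugate of $JH''(u_c(\cdot))$ by a multiplication operator plus the transport term $-c a \,\mathrm{Id}$ coming from the time dependence of the weight. The proposition is then equivalent to a uniform exponential decay estimate $\|e^{t\mathcal{A}_{c,a}}\tilde{w}_0\|_{\ell^2} \le K e^{-b t}\|\tilde{w}_0\|_{\ell^2}$ on the subspace where the pulled-back versions of~\eqref{eq:toda_perp} hold.

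Next I would compute the spectrum of $\mathcal{A}_{c,a}$. For the Toda potential $V(x) = e^x - 1 - x$, integrability gives an explicit Lax pair and Jost solutions for $H''(u_c)$, and one can factor $JH''(u_c)$ in the usual ``squared eigenfunction'' form. The essential spectrum of $\mathcal{A}_{c,0}$ on $\ell^2$ fills a pair of arcs on the imaginary axis; turning on the weight $a>0$ shifts these arcs into the open left half plane provided $a$ is chosen in a window dictated by the decay rate $\kappa(c)$ of the wave (one wants $0 < a < \kappa(c)/2$). In this window the only imaginary-axis spectrum of $\mathcal{A}_{c,a}$ consists of a semisimple eigenvalue at $0$ with geometric multiplicity two, spanned by $T_c \partial_x u_c$ and $T_c \partial_c u_c$, which are precisely the modes killed by the orthogonality conditions~\eqref{eq:toda_perp} (these conditions are dual to the generalized eigenfunctions of the adjoint, so they are invariant under the flow). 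Restricting to the spectral complement and applying the Gearhart--Prüss theorem (or the explicit inverse Laplace transform available for the Toda case) yields the decay estimate with rate $b$ equal to the spectral gap.

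The main obstacle, and the only piece that is not already contained in \cite{PF4}, is tracking the small-$\eps$ asymptotics. Here I would use the KdV scaling $c = 1 + \tfrac{\eps^2}{24}\beta$ together with Lemma~\ref{lem:small_eps_2} and the uniform operator-norm convergence $P^{(\eps,\beta)}\to P^{(0)}$ from Lemma~\ref{lem:small_eps_1}. Under this rescaling the spectrum of $\mathcal{A}_{c,a}$ converges, after appropriate renormalization, to that of the KdV linearization about the soliton $Q_\beta$ analyzed in \cite{PW}; the spectral gap of the latter is $\mathcal{O}(1)$ in the rescaled variables, which translates back to a gap of order $\eps^3$ in the original time, while the optimal weight $a$ must lie inside $(0,\kappa(c)/2)\sim(0,\eps/2)$ and is thus $\mathcal{O}(\eps)$. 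Uniformity of $K$ in $c$ follows from the uniform convergence of resolvents, together with the fact that the spectral projections onto the two neutral modes are uniformly bounded because their associated matrix $A_0^{FP}$ computed in Lemma~\ref{L6} is invertible with controlled norm. The delicate point is to verify that the constant in the Gearhart--Prüss bound does not itself blow up as $\eps \to 0$; this requires a resolvent estimate on vertical lines just to the right of the shifted essential spectrum, which one obtains by combining the explicit Toda Green's function with the KdV comparison.
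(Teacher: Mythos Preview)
Your approach differs substantially from the paper's, and the difference is not cosmetic. You propose the Friesecke--Pego route: conjugate to an autonomous generator $\mathcal{A}_{c,a}$, locate its spectrum, and invoke Gearhart--Pr\"uss to extract semigroup decay. The paper instead follows Mizumachi--Pego \cite{MP}: one first proves the decay estimate for the \emph{constant-coefficient} linearization $w_t = JH''(0)w$ (where it is elementary Fourier analysis in $\ell^2_a$), and then constructs an explicit B\"acklund transformation $\Psi_c$ that conjugates the flow of \eqref{eq:Toda_lin} on the subspace defined by \eqref{eq:toda_perp} to the flow of the zero-state linearization on all of $\ell^2_a$. The constant $K$ then factors as $\|\Psi_c\|\cdot\|\Psi_c^{-1}\|$ times the (harmless) constant for the zero-state problem, and the entire burden of $\eps$-uniformity reduces to bounding the B\"acklund map and its inverse uniformly in $c$. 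This is done by hand in Lemma~\ref{lem:psi_inv} via explicit formulae for the Toda soliton.

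The gap in your proposal is precisely at the step you flag as ``delicate.'' Gearhart--Pr\"uss gives $K$ in terms of $\sup_{s\in\R}\|(is - \mathcal{A}_{c,a})^{-1}\|$, and as $\eps\to 0$ the spectral gap collapses to order $\eps^3$ while the weighted essential spectrum flattens toward the imaginary axis; a uniform resolvent bound on the line $\mathrm{Re}\,\lambda = -b \sim -\eps^3$ is therefore far from automatic. Your appeal to ``the explicit Toda Green's function with the KdV comparison'' is a plausible hope but not an argument, and the fact that the paper (and \cite{MP}) resort to the B\"acklund machinery rather than the spectral route of \cite{PF4} is a strong signal that the direct resolvent bound is hard to make $\eps$-uniform. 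There is also a minor slip in your spectral picture: the eigenvalue at $0$ is not semisimple. The vector $\partial_c u_c$ is a \emph{generalized} eigenvector sitting atop $\partial_x u_c$ in a $2\times 2$ Jordan block, as is visible from the relation $\partial_\tau \xi_{2,\alpha} = \tfrac{1}{c_\alpha}JH''(u_{c_\alpha})\xi_{2,\alpha} - \tfrac{1}{c_\alpha}\xi_{1,\alpha}$ used in the proof of Lemma~\ref{perp4}.
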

\begin{rmk} Here the inner product in $\eqref{eq:toda_perp}$ is the dual pairing between $\ell^2_{-a}$ and $\ell^2_a$.  The Neumann series form for $J^{-1}$ reflects this: $(I -S)^{-1}$ is the sum $\sum_{k=0}^\infty S^k$ which has norm $\frac{1}{1-e^{-a}}$ as an operator from $\ell^2_{-a}$ to itself.  This result is for rightward moving waves.  Of course, to examine leftward moving waves, one takes $\langle \cdot, \cdot \rangle$ to be the dual pairing between $\ell^2_a$ and $\ell^2_{-a}$ and defines $J^{-1} = -\sum_{k=1}^\infty S^{-k}$.  
\end{rmk}

The estimates in Proposition $\ref{prop:Toda_sg}$ are proven in \cite{MP}, with the exception of the fact that $K$ is independent of $c$, which turns out to be crucial for our proof of Theorem $\ref{thm:Orbital_Stability}$.  We establish this by studying the $\eps$-dependence of the B\"{a}cklund transformations constructed in \cite{MP}.  Details are provided in Appendix A.2.

We move now toward the main stability estimate.  Our aim is to use the exponential estimate $\eqref{eq:decay_toda}$ to solve the equation $\eqref{eq:perturb}$ via the variation of constants formula.  
Note that equation $\eqref{eq:toda_perp}$ may be rewritten in our notation as $\omega_+(\xi_i(c_+^*,\tau_+), w) = 0$ for $i = 1,2$.  
This differs from the orthogonality conditions $\eqref{Perp1}$ and $\eqref{Perp2}$ that we impose on $v_{2,\alpha}$ in three important ways.  First, $\xi_{i,\alpha}$ is evaluated at a fixed wavespeed $c_\alpha^*$ rather than at a modulated wavespeed $c_\alpha(t)$ and second, the condition $\omega_\alpha(\xi_{1,\alpha},v_{2,\alpha}) = 0$ is imposed rather than $\omega_\alpha(\xi_{1,\alpha},v_{2,\alpha}) = -\omega_\alpha(\xi_{1,\alpha},v_{1,\alpha})$.  Third, we consider any potential $V$ which satisfies $(H1)$ which is more general than restricting to the Toda potential.  More concretely, we have only proven that the estimate $\eqref{eq:decay_toda}$ holds for perturbations which are symplectically orthogonal to the tangent vectors of the wave state manifold {\it for the Toda model} and at a {\it fixed reference wave speed} $c^*_\alpha$.  Our perturbation $v_{2,\alpha}$ satisfies different conditions, $\eqref{Perp1}$ and $\eqref{Perp2}$, which impose symplectic orthogonality to the tangent vectors of the wave state manifold {\it for the FPU model} and at a {\it modulated wave speed} $c_\alpha(t)$.  

Furthermore in Proposition $\ref{prop:Toda_sg}$ we have studied the semigroup for the Toda lattice linearized about a solitary wave while the linear part of our evolution equation generates a different semigroup - that of a more general FPU model linearized about a solitary wave.  Thus the solitary wave profile $u_c$ which appears both in the orthogonality condition $\eqref{eq:toda_perp}$ and the evolution equation $\eqref{eq:Toda_lin}$ will differ between this proposition and our later analysis.  

The reason for these differences are that $\eqref{Perp1}$ and $\eqref{Perp2}$ are essential for obtaining good bounds on the modulation equation while $\eqref{eq:toda_perp}$ is the most natural condition (and the one used in \cite{PF4} and subsequently in \cite{MP}) for extending the KdV semigroup estimates of \cite{PW} to the FPU context.  Moreover, it is useful to work with the Toda potential because the B\"{a}cklund transformation constructed in \cite{MP} allows us to obtain uniform estimates in the small $\eps$ regime for the constant in $\eqref{eq:decay_toda}$.  These differences contribute additional terms to the evolution equation for $v(t)$ which we bound and regard as small inhomogeneous terms on the right hand side of $\eqref{eq:perturb}$.  This is addressed in Lemma $\ref{lem:Qalpha}$.

We now define a map which takes as an input $v_{2,\alpha}$ satisfying $\eqref{Perp1}$ and $\eqref{Perp2}$ and gives as its output a $\tilde{v}_{2,\alpha}$ satisfying $\eqref{eq:toda_perp}$. 
Given an initial time $t_0$, define $\tilde{\xi}_{1,\alpha}(\tau) := \partial_x u_{c_\alpha(t_0)}^T(\cdot - \tau)$ and $\tilde{\xi}_{2,\alpha}(\tau) := \partial_c u_{c_\alpha(t_0)}^T(\cdot - \tau)$ where the superscript $T$ denotes that these are solitary waves for the Toda lattice.  Given $v_{2,\alpha}$ satisfying $\eqref{Perp1}$ and $\eqref{Perp2}$ we seek $\tilde{v}_{2,\alpha}$ close to $v_{2,\alpha}$ which satisfies the conditions $\eqref{eq:toda_perp}$ which we write as 
$$\omega_\alpha(\tilde{\xi}_{i,\alpha}, \tilde{v}_{2,\alpha} ) = 0 \qquad i = 1,2$$
so that we may apply the semigroup estimates $\eqref{eq:decay_toda}$ of Proposition $\ref{prop:Toda_sg}$ to control the evolution of $\tilde{v}_{2,\alpha}$.

To that end, we define a projection $Q_\alpha$ onto the symplectically orthogonal space as follows:

\begin{equation} \label{eq:Qdef}
Q_\alpha v := v - \frac{\omega_\alpha(\tilde{\xi}_{2,\alpha}, v)}{\omega_\alpha(\tilde{\xi}_{2,\alpha},\tilde{\xi}_{1,\alpha})}\tilde{\xi}_{1,\alpha} - \frac{\omega_\alpha(\tilde{\xi}_{2,\alpha}, \tilde{\xi}_{2,\alpha})\omega_\alpha(\tilde{\xi}_{1,\alpha},v)}{\omega_\alpha (\tilde{\xi}_{2,\alpha},\tilde{\xi}_{1,\alpha})^2} \tilde{\xi}_{1,\alpha} - \frac{\omega_\alpha(\tilde{\xi}_{1,\alpha},v)}{\omega_\alpha(\tilde{\xi}_{1,\alpha},\tilde{\xi}_{2,\alpha})} \tilde{\xi}_{2,\alpha}
\end{equation}
We remark that the denominator $\omega_\alpha(\tilde{\xi}_{1,\alpha},\tilde{\xi}_{2,\alpha})$ is bounded below by a multiple of $\eps$ as was shown in Lemma 9.1 of \cite{PF1} and Lemma 2.1 in \cite{PF2}.

Define 
$$\tilde{L}_\alpha = \partial_t - \frac{\dot{\tau}_\alpha}{c_\alpha(t_0)}JH_{Toda}''(u^T_{c_\alpha(t_0)})$$

\begin{lemma} \label{lem:Qalpha}
Suppose that $v_{2,\alpha}$ satisfies $\eqref{Perp1}$ and $\eqref{Perp2}$ and $|c(t) - c(t_0)| < C\eps^3$.  Then for $\eps$ sufficiently small, 
\begin{equation}
\| v_{2,\alpha} \|_\alpha \le C(\| Q_\alpha v_{2,\alpha}\|_\alpha + \|v_{1,\alpha}\|_{W_\alpha}) 
\label{eq:Q2}
\end{equation}
Moreover
$$[ \tilde{L}_\alpha, Q_\alpha] = 0$$
Furthermore, there is a $C$ which is independent of $\eps$ such that $\|Q_\alpha x\|_\alpha \le C\|x\|_\alpha$
\end{lemma}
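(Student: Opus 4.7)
I begin with the easiest assertion. Expanding $Q_\alpha x$ via $\eqref{eq:Qdef}$, each of the three correction terms has the schematic form $\omega_\alpha(\tilde{\xi}_{i,\alpha},x)\cdot\omega_\alpha(\tilde{\xi}_{2,\alpha},\tilde{\xi}_{1,\alpha})^{-\ell}\cdot(\text{product of }\tilde{\xi}\text{'s})$. I would bound the $\omega_\alpha$-factor in each numerator using the dual-pairing inequality $\eqref{eq:OmegaBdd}$ together with the weighted-norm asymptotics of $\|\tilde{\xi}_{i,\alpha}\|_{\pm a}$ (order $\eps^{5/2}$ for $\tilde{\xi}_1$ and order $\eps^{-1/2}$ for $\tilde{\xi}_2$) that come from Lemma $\ref{lem:small_eps_2}$. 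The denominators are controlled by $\omega_\alpha(\tilde{\xi}_{2,\alpha},\tilde{\xi}_{1,\alpha})\sim\eps$ and $\omega_\alpha(\tilde{\xi}_{2,\alpha},\tilde{\xi}_{2,\alpha})\sim\eps^{-2}$, as read off from the $A_0^{FP}$ analysis in Lemma $\ref{L6}$. In each of the three terms the powers of $\eps$ cancel exactly to produce a bound with constant independent of $\eps$.

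\textbf{Part 2 (commutator identity).} The heart of this part is the pair of identities
\begin{equation*}
\tilde{L}_\alpha\tilde{\xi}_{1,\alpha}=0,\qquad \tilde{L}_\alpha\tilde{\xi}_{2,\alpha}=\frac{\dot{\tau}_\alpha}{c_\alpha(t_0)}\tilde{\xi}_{1,\alpha}.
\end{equation*}
The first follows by differentiating the Toda traveling wave equation $-c_\alpha(t_0)\partial_x u^T=JH'_{Toda}(u^T)$ in $x$ to get $JH''_{Toda}(u^T)\tilde{\xi}_{1,\alpha}=-c_\alpha(t_0)\partial_x^2 u^T$, which matches $\partial_t\tilde{\xi}_{1,\alpha}=-\dot{\tau}_\alpha\partial_x^2 u^T$ after multiplication by $\dot{\tau}_\alpha/c_\alpha(t_0)$. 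Differentiating the traveling wave equation in $c$ yields the second identity. Together these show that $\ker Q_\alpha=\mathrm{span}\{\tilde{\xi}_{1,\alpha},\tilde{\xi}_{2,\alpha}\}$ is $\tilde{L}_\alpha$-invariant. For the range of $Q_\alpha$, I would use that $JH''_{Toda}(u^T)$ is anti-self-adjoint with respect to $\omega_\alpha$ (a consequence of $J^*=-J$ and the symmetry of $H''$), combined with the translation-invariance of $\omega_\alpha(\tilde{\xi}_{i,\alpha},\tilde{\xi}_{j,\alpha})$ already used in Lemma $\ref{L6}$. A direct calculation of $\frac{d}{dt}\omega_\alpha(\tilde{\xi}_{i,\alpha},v)$ and $\omega_\alpha(\tilde{\xi}_{i,\alpha},\tilde{L}_\alpha v)$ then verifies the two functional identities that $Q_\alpha\tilde{L}_\alpha v=\tilde{L}_\alpha Q_\alpha v$ demands.

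\textbf{Part 1 (control estimate).} Decompose $v_{2,\alpha}=Q_\alpha v_{2,\alpha}+(I-Q_\alpha)v_{2,\alpha}$ and apply the triangle inequality in $\|\cdot\|_\alpha$. The correction $(I-Q_\alpha)v_{2,\alpha}$ is a linear combination of $\tilde{\xi}_{1,\alpha},\tilde{\xi}_{2,\alpha}$ with coefficients involving $\omega_\alpha(\tilde{\xi}_{i,\alpha},v_{2,\alpha})$. Since $v_{2,\alpha}$ is orthogonal to the FPU tangent vectors $\xi_{i,\alpha}$ at the modulated speed $c_\alpha(t)$ rather than to the Toda tangent vectors $\tilde{\xi}_{i,\alpha}$ at $c_\alpha(t_0)$, I would rewrite
\begin{align*}
\omega_\alpha(\tilde{\xi}_{2,\alpha},v_{2,\alpha}) &= \omega_\alpha(\tilde{\xi}_{2,\alpha}-\xi_{2,\alpha},v_{2,\alpha}),\\
\omega_\alpha(\tilde{\xi}_{1,\alpha},v_{2,\alpha}) &= -\omega_\alpha(\xi_{1,\alpha},v_{1,\alpha})+\omega_\alpha(\tilde{\xi}_{1,\alpha}-\xi_{1,\alpha},v_{2,\alpha}),
\end{align*}
using $\eqref{Perp1}$ and $\eqref{Perp2}$. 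The differences $\tilde{\xi}_{i,\alpha}-\xi_{i,\alpha}$ split into a Toda-vs-FPU piece at fixed wavespeed and a $c$-variation piece from $c_\alpha(t_0)$ to $c_\alpha(t)$; the former is small since both profiles are $O(\eps^2)$-close to the same renormalized KdV profile by Lemma $\ref{lem:small_eps_2}$, and the latter is controlled by the hypothesis $|c_\alpha(t)-c_\alpha(t_0)|<C\eps^3$ together with the $C^3$-regularity in $c$ from the same lemma. For the $v_{1,\alpha}$ contribution I would apply $\xi_{1,\alpha}=-c_\alpha^{-1}JH'(u_{c_\alpha})$ and the relation $\omega_\alpha(Jz,y)=\langle z,y\rangle$ to rewrite $\omega_\alpha(\xi_{1,\alpha},v_{1,\alpha})=-c_\alpha^{-1}\langle H'(u_{c_\alpha}),v_{1,\alpha}\rangle$, which is bounded by $C\eps^{3/2}\|v_{1,\alpha}\|_{W_\alpha}$ because $H'(u_{c_\alpha})$ is exponentially localized at rate $\kappa\sim\eps$ while the weight of $W_\alpha$ satisfies $\bar{a}\ll\kappa$.

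\textbf{Main obstacle.} The delicate bookkeeping is in Part 1, verifying that the $c$-variation correction contributes only $O(\eps)\|v_{2,\alpha}\|_\alpha$ and hence can be absorbed into the left-hand side for small $\eps$. The worst term involves $|c_\alpha(t)-c_\alpha(t_0)|\cdot\|\partial_c^2 u_{c_\alpha}\|$, which by $\eqref{eq:power_rules}$ grows like $\eps^{-5/2}$; combined with $|c-c_0|<C\eps^3$ this gives $C\eps^{1/2}$, and after dividing by $\omega_\alpha(\tilde{\xi}_{2,\alpha},\tilde{\xi}_{1,\alpha})\sim\eps$ and multiplying by $\|\tilde{\xi}_{1,\alpha}\|_\alpha\sim\eps^{5/2}$ one obtains exactly $O(\eps)$. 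The hypothesis $|c_\alpha(t)-c_\alpha(t_0)|<C\eps^3$ is precisely calibrated to make this closure possible; any weaker hypothesis would leave the estimate open.
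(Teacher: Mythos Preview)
Your proposal is correct and follows essentially the same approach as the paper's own proof: the boundedness of $Q_\alpha$ by direct application of $\eqref{eq:power_rules}$ to the definition $\eqref{eq:Qdef}$; the commutator identity via the generalized-eigenvector relations $\tilde{L}_\alpha\tilde{\xi}_{1,\alpha}=0$ and $\tilde{L}_\alpha\tilde{\xi}_{2,\alpha}\in\mathrm{span}\{\tilde{\xi}_{1,\alpha}\}$ plus a direct computation of $\frac{d}{dt}Q_\alpha v$; and the control estimate $\eqref{eq:Q2}$ by writing $(I-Q_\alpha)v_{2,\alpha}$ explicitly, rewriting $\omega_\alpha(\tilde{\xi}_{i,\alpha},v_{2,\alpha})$ via $\eqref{Perp1}$--$\eqref{Perp2}$, and splitting $\tilde{\xi}_{i,\alpha}-\xi_{i,\alpha}$ into a Toda--FPU piece (bounded by $\eqref{eq:eps^2}$, yielding what the paper records as $\eqref{eq:power_rules2}$) and a $c$-variation piece (bounded by $\eqref{eq:power_rules}$ and the hypothesis on $|c(t)-c(t_0)|$). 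The paper's final absorption step is stated slightly more coarsely (it only uses $\eps^{-5/2}|c(t)-c(t_0)|\ll 1$ rather than tracking it to $O(\eps^{1/2})$ as you do), but the mechanism is identical.
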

\begin{proof}
To see that the norm of $Q_\alpha$ is bounded uniformly in $\eps$, apply $\eqref{eq:power_rules}$ to $\eqref{eq:Qdef}$.
We now establish $\eqref{eq:Q2}$.  We estimate
$$ \ba{lllr}
\omega_\alpha(\tilde{\xi}_{2,\alpha},\tilde{\xi}_{1,\alpha}) (I - Q_\alpha) v_{2,\alpha} & = &
\omega_\alpha(\tilde{\xi}_{2,\alpha},v_{2,\alpha})\tilde{\xi}_{1,\alpha} & (i) \\ \\ 
& & + \omega_\alpha(\tilde{\xi}_{1,\alpha},v_{2,\alpha})
\left( \frac{\omega_\alpha(\tilde{\xi}_{2,\alpha},\tilde{\xi}_{2,\alpha})  \tilde{\xi}_{1,\alpha}}{\omega_\alpha(\tilde{x}_{1,\alpha},\tilde{\xi}_{2,\alpha})} + \tilde{\xi}_{2,\alpha}\right)  & (ii)
\ea 
$$
Observe that since the wave profiles $u_c$ and $u_c^T$ are both equal to the KdV wave profile to leading order in $\eps$, their difference and the derivatives of their difference is higher order in $\eps$.  To quantify this use $\eqref{eq:eps^2}$ and the triangle inequality to obtain
\begin{equation}
\| \partial_c^k \partial_\tau^j (u_c - u_c^T)\|_\alpha \le C \eps^{7/2 + j - 2k},
\label{eq:power_rules2}
\end{equation}
which is reminscent $\eqref{eq:power_rules}$.  From the above, it follows that 
$$ \| \tilde{\xi}_{2,\alpha} - \xi_{2,\alpha} \|_\alpha \le 
\| \partial_c u_{c(t)} - \partial_c u_{c(t_0)} \|_\alpha + \| \partial_c u_{c(t_0)} - \partial_c u_{c(t_0)}^T \|_\alpha \le  C\left( \eps^{-5/2} |c(t) - c(t_0)| + \eps^{3/2} \right).$$
here we have used $\eqref{eq:power_rules}$ and the Mean Value Theorem to bound the first term and $\eqref{eq:power_rules2}$ to bound the second term.  Similarly, we obtain
$$ \| \tilde{\xi}_{1,\alpha} - \xi_{1,\alpha} \|_\alpha \le 
\| \partial_\tau u_{c(t)} - \partial_\tau u_{c(t_0)} \|_\alpha + \| \partial_\tau u_{c(t_0)} - \partial_\tau u_{c(t_0)}^T \|_\alpha \le C \left( \eps^{1/2} |c(t) - c(t_0)| + \eps^{9/2}\right).$$
Use $\eqref{eq:power_rules}$ together with the above and
$$\omega_\alpha(\tilde{\xi}_{1,\alpha},v_{2,\alpha}) = \left[\omega_\alpha(\tilde{\xi}_{1,\alpha} - \xi_{1,\alpha},v_{2,\alpha}) + \omega(\xi_{1,\alpha},v_{1,\alpha})\right]$$
and
$$\omega_\alpha(\tilde{\xi}_{2,\alpha},v_{2,\alpha}) = \omega_\alpha(\tilde{\xi}_{2,\alpha} - \xi_{2,\alpha},v_{2,\alpha})$$
which follow from the orthogonality conditions $\eqref{Perp1}$ and $\eqref{Perp2}$ to obtain

$$(i) \le (\eps^{-3/2} |c(t) - c(t_0)| + \eps^{5/2})\|v_{2,\alpha}\|_\alpha$$ and 
$$(ii) \le C \eps \|v_{1,\alpha}\|_{W_\alpha} + C(\eps^{-1} |c(t) - c(t_0)| + \eps^3)\|v_{2,\alpha}\|_\alpha
$$

Summing these estimates and dividing by $\omega_\alpha(\tilde{\xi}_{2,\alpha},\tilde{\xi}_{1,\alpha}) \sim \eps$, (which is known from $P3$), we obtain
$$\| (I - Q_\alpha) v_{2,\alpha} \| \le C\| v_{1,\alpha} \|_{W_\alpha} + C\left( \eps^{-5/2} | c(t_0) - c(t) | + \eps^{3/2} \right) \|v_{2,\alpha} \|_\alpha $$
After applying the triangle inequality and using the fact that $\eps^{-5/2} | c(t_0) - c(t) | < < 1$ we obtain
$\eqref{eq:Q2}$ as desired.

We now show that $Q_\alpha$ commutes with $\tilde{L}_\alpha$.  If the following computation $H := H_{Toda}$.
Write $Q_\alpha v = v - a_1 \omega_\alpha (\tilde{\xi}_{2,\alpha},v)\tilde{\xi}_{1,\alpha} - a_2\omega_\alpha(\tilde{\xi}_{1,\alpha},v)\tilde{\xi}_{1,\alpha} + a_1\omega(\tilde{\xi}_{1,\alpha},v)\tilde{\xi}_{2,\alpha}$ where
$a_1 = \frac{1}{\omega_\alpha(\tilde{\xi}_{2,\alpha},\tilde{\xi}_{1,\alpha})}$ and $a_2 = \frac{\omega_\alpha(\tilde{\xi}_{2,\alpha},\tilde{\xi}_{2,\alpha})}{\omega(\tilde{\xi}_{1,\alpha},\tilde{\xi}_{2,\alpha})}$.  Note that $a_1$ and $a_2$ are constant in $t$ because $\tilde{\xi}_{i,\alpha}$ are evaluated at a fixed wave speed $c_\alpha^*$.

Also recall that $\partial_t \tilde{\xi}_{1,\alpha} = \frac{\dot{\tau}}{c_0} JH''(u_{c_0})\tilde{\xi}_{1,\alpha}$ and $\partial_t \tilde{\xi}_{2,\alpha} = \frac{\dot{\tau}}{c_0} JH''(u_{c_0})\tilde{\xi}_{2,\alpha} - \frac{1}{c_0}\tilde{\xi}_{1,\alpha}$.

One may compute
$$\ba{lll} 
\frac{d}{dt} Qv & = & \partial_t v - a_1 \omega_\alpha(\tilde{\xi}_{2,\alpha},\tilde{L}_\alpha v)\tilde{\xi}_{1,\alpha} + \frac{a_1}{c_0}\omega_\alpha(\tilde{\xi}_{1,\alpha},v)\tilde{\xi}_{1,\alpha} - a_2\omega_\alpha(\tilde{\xi}_{1,\alpha},\tilde{L}_\alpha v)\tilde{\xi}_{1,\alpha} + a_1\omega_\alpha(\tilde{\xi}_{1,\alpha},\tilde{L}_\alpha v)\tilde{\xi}_{2,\alpha} \\ \\
& & - \frac{\dot{\tau}}{c_0} JH''(u_{c_0}) \left( v-Qv \right) - \frac{a_1}{c_0}\omega_\alpha(\tilde{\xi}_{1,\alpha},v)\tilde{\xi}_{1,\alpha} \\ \\
& = & \frac{\dot{\tau}}{c_0} JH'''(u_{c_\alpha})v + \tilde{L}_\alpha v - a_1 \omega_\alpha(\tilde{\xi}_{2,\alpha},\tilde{L}_\alpha v)\tilde{\xi}_{1,\alpha}  - a_2\omega_\alpha(\tilde{\xi}_{1,\alpha},\tilde{L}_\alpha v)\tilde{\xi}_{1,\alpha} + a_1\omega_\alpha(\tilde{\xi}_{1,\alpha},\tilde{L}_\alpha v)\tilde{\xi}_{2,\alpha}
\ea
$$
Thus
$\tilde{L}_\alpha Q_\alpha v = Q_\alpha \tilde{L}_\alpha v$ as desired.
\end{proof}

Thus
\begin{equation}
 \ba{lll}
\tilde{L}_\alpha Q_\alpha v_{2,\alpha} & = & Q_\alpha L_\alpha v_{2,\alpha} + Q_\alpha (\tilde{L}_\alpha - L_\alpha)v_{2,\alpha} \\ \\
& = & Q_\alpha \left( (Jg_\alpha)h_\alpha + \tilde{\ell}_\alpha + (\tilde{L}_\alpha - L_\alpha)v_{2,\alpha}\right)   \\ \\
& := &  G_\alpha
\ea
\label{eq:stability_perturb}
\end{equation}

Let $\Psi_\alpha$ be the evolution semigroup associated to equation $\eqref{eq:Toda_lin}$.  Then solutions of $\eqref{eq:stability_perturb}$ have the following representation via the variation of constants formula.
\begin{equation}
Q_\alpha v_{2,\alpha}(t) =  \Psi_\alpha(\tau_\alpha(t),\tau_\alpha(t_0))Q_\alpha v_{2,\alpha}(t_0) +  \int_{t_0}^t \Psi_\alpha(\tau_\alpha(t),\tau_\alpha(s))G_\alpha(s) ds \label{E11}
\end{equation}

In light of the exponential estimates on $\Psi_\alpha$ given by $\eqref{eq:decay_toda}$ we are now in a position to control $\|v_{2,\alpha}\|_\alpha$.  

\begin{thm}[Stability Estimates] \label{T5}
Assume (H1).  For each $\eta > 0$, there exist positive constants $C_*$ and $\delta_*$ which may be chosen independent of $\eps$, with the following property: 

Suppose that $u$ is a solution of $\eqref{FPU}$ and
$$u(t) = u_{c_+(t)}(\cdot - \tau_+(t)) + u_{c_-(t)}(\cdot - \tau_-(t)) + v_1(t) + v_2(t) \qquad t \in [t_0,t_1]$$
Suppose further that the solutions satisfy
$$|c(t) - c(t_0)| + \|v_2(t)\| + \|v_1(t)\| + \|v_{2,+}(t)\|_+ + \|v_{2,-}(t)\|_- < C\eps^{3 + \eta} \qquad \mbox{ for all } t \in [t_0,t_1].$$
Then for all $t \in [t_0,t_1]$ the following inequalities necessarily hold: 
\begin{equation}
|c_\alpha(t) - c_\alpha(t_0)| \le C\left[ \eps^{-4} \|v_1(t_0)\|^2 + \eps^{-1} \|v_{2,\alpha}(t_0)\|_\alpha^2 \right] + {\tt exp}(t_0) 
\label{eq:T5E1}
\end{equation}
and
\begin{equation}
\|v(t)\|^2 \le C\left[ \eps^{-3}\|v_1(t_0)\|^2 + \|v_2(t_0)\|^2 + \|v_{2,+}(t_0)\|_+^2 + \|v_{2,-}(t_0)\|_-^2 \right] + {\tt exp}(t_0)
\label{eq:T5E2}
\end{equation}
and
\begin{equation}
\|v_{2,\alpha}(t) \|_\alpha^2 \le C\left[\eps^{-3} \|v_1(t_0)\|^2 + \|v_{1,\alpha}(t)\|_{W_\alpha(t)}^2 + e^{-2b(t-t_0)}\|v_{2,\alpha}(t_0)\|_\alpha^2 +e^{-2bt}  \right].
\label{eq:T5E3}
\end{equation} 
\end{thm}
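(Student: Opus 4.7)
The plan is to combine the variation of constants formula $\eqref{E11}$ with the exponential semigroup decay of Proposition~$\ref{prop:Toda_sg}$ to produce first an $L^2_t$ bound on $\|v_{2,\alpha}(s)\|_\alpha$, then to read off $\eqref{eq:T5E1}$ by integrating the modulation estimate $\eqref{eq:cdot_RHS}$ against that $L^2_t$ bound and the virial bound $\eqref{eq:virial}$, and finally to obtain $\eqref{eq:T5E2}$ by inserting $\eqref{eq:T5E1}$ into the convexity energy estimate of Lemma~$\ref{apriori2}$. The pointwise bound $\eqref{eq:T5E3}$ is then extracted via a Cauchy--Schwartz argument applied directly to the Duhamel integral.

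I start by bounding the forcing $G_\alpha$ of $\eqref{eq:stability_perturb}$. Lemma~$\ref{lem:RHS_est_Mizu}$ controls the $(Jg_\alpha)h_\alpha$ piece, and the smallness hypothesis converts the quadratic factor $(\|v_{1,\alpha}\| + \|v_{2,\alpha}\|)\|v_{2,\alpha}\|_\alpha$ into $\eps^{3+\eta}\|v_{2,\alpha}\|_\alpha$. The $\tilde\ell_\alpha$ piece is bounded by $|\dot c_\alpha|\|\xi_{2,\alpha}\|_\alpha + |\dot\gamma_\alpha|\|\xi_{1,\alpha}\|_\alpha$, which combined with Lemma~$\ref{lem:RHS_est_Mizu}$ and $\eqref{eq:power_rules}$ is strictly higher order, and the $(\tilde L_\alpha - L_\alpha)v_{2,\alpha}$ piece is small thanks to the Toda-to-FPU potential difference and reference-to-modulated speed corrections quantified in the proof of Lemma~$\ref{lem:Qalpha}$. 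Summing gives $\|G_\alpha\|_\alpha \le C\eps^2\|v_{1,\alpha}\|_{W_\alpha} + C\eps^{3+\eta}\|v_{2,\alpha}\|_\alpha + {\tt exp}$. Squaring $\eqref{E11}$, applying Young's inequality for convolutions (the kernel $e^{-b(t-s)}$ with $b\sim\eps^3$ has $L^1_t$-norm $1/b$), and using Lemma~$\ref{lem:Qalpha}$ and the virial bound, the $\|v_{2,\alpha}\|_\alpha^2$ contribution on the right acquires coefficient $\eps^{6+2\eta}/b^2 \sim \eps^{2\eta}$ which is absorbed on the left, and one obtains
\[ \int_{t_0}^{t_1}\|v_{2,\alpha}(s)\|_\alpha^2\,ds \le C\eps^{-6}\|v_1(t_0)\|^2 + C\eps^{-3}\|v_{2,\alpha}(t_0)\|_\alpha^2 + {\tt exp}(t_0). \]
Integrating $\eqref{eq:cdot_RHS}$ (with prefactor $(\eps^2+\|v\|)$ replaced by $C\eps^2$ via the smallness hypothesis) against this $L^2_t$ bound and the virial bound delivers $\eqref{eq:T5E1}$; Lemma~$\ref{apriori2}$ combined with $\eqref{eq:T5E1}$ and the decomposition $\|v(t_0)\|^2 \le 2\|v_1(t_0)\|^2 + 2\|v_2(t_0)\|^2$ then yields $\eqref{eq:T5E2}$.

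For the pointwise bound $\eqref{eq:T5E3}$ I revisit $\eqref{E11}$, take the $\|\cdot\|_\alpha$ norm and square, then apply Cauchy--Schwartz to the Duhamel integral in the form $\bigl(\int e^{-b(t-s)}\|G_\alpha\|_\alpha\,ds\bigr)^2 \le b^{-1}\int e^{-b(t-s)}\|G_\alpha\|_\alpha^2\,ds$. Bounding $\|G_\alpha\|_\alpha^2$ as before and dropping the inner $e^{-b(t-s)}$ against the virial bound and the $L^2_t$ bound from the previous step extracts the term $Ce^{-2b(t-t_0)}\|v_{2,\alpha}(t_0)\|_\alpha^2$ from the pure semigroup piece and the $C\eps^{-3}\|v_1(t_0)\|^2$ term from the $v_{1,\alpha}$-driven Duhamel contribution; the accumulated exponentially small cross-interaction terms combine with the $e^{-bt}$ semigroup factor to furnish the $e^{-2bt}$ collector. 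A final application of Lemma~$\ref{lem:Qalpha}$ converts the bound on $\|Q_\alpha v_{2,\alpha}\|_\alpha$ into $\|v_{2,\alpha}\|_\alpha \le C(\|Q_\alpha v_{2,\alpha}\|_\alpha + \|v_{1,\alpha}(t)\|_{W_\alpha(t)})$, producing $\eqref{eq:T5E3}$.

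The main obstacle is making the $L^2_t$ absorption actually close. This requires three quantitative ingredients to conspire: the sharp smallness $\|v_{2,\alpha}\|\le C\eps^{3+\eta}$ from the hypothesis, the uniform-in-$\eps$ decay constant $K$ in $\eqref{eq:decay_toda}$ (the payoff of the B\"acklund transform analysis behind Proposition~$\ref{prop:Toda_sg}$), and the decay rate $b\sim\eps^3$. Weaken any of these and the $\eps^{2\eta}$ absorption factor fails to be strictly less than $1$. A secondary but nontrivial task is controlling $\|(\tilde L_\alpha - L_\alpha)v_{2,\alpha}\|_\alpha$; this uses the Toda-versus-FPU wave-profile asymptotics $\eqref{eq:power_rules2}$ together with the running smallness $|c_\alpha(t)-c_\alpha(t_0)|<C\eps^3$.
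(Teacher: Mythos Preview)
Your strategy matches the paper's almost exactly: Duhamel with the Toda semigroup decay, an estimate on $\|G_\alpha\|_\alpha$ of the form $C\eps^2\|v_{1,\alpha}\|_{W_\alpha}+C\eps^{3+\eta}\|v_{2,\alpha}\|_\alpha+{\tt exp}$, an $L^2_t$ bound on $\|v_{2,\alpha}\|_\alpha$ feeding into the integrated $\dot c_\alpha$ estimate, and then Lemma~\ref{apriori2}. Your $L^2_t$ step differs in the order of operations---you square, integrate, and absorb via Young's convolution inequality, while the paper applies the integral Gronwall inequality to the \emph{pointwise} Duhamel estimate first (producing $\eqref{eq:Gronwall1}$, which already has no $\|v_{2,\alpha}\|_\alpha$ on the right), and only then squares and integrates. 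Both routes give the same $L^2_t$ bound, since the absorption coefficient $\eps^{6+2\eta}/b^2\sim\eps^{2\eta}$ is exactly the Gronwall exponent $\eps^{3+\eta}/b\sim\eps^\eta$ squared.

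There is, however, a real gap in your derivation of $\eqref{eq:T5E3}$. When you go back to the pointwise Duhamel bound, square, and Cauchy--Schwartz the integral, the $\|v_{2,\alpha}\|_\alpha$ contribution to $\|G_\alpha\|_\alpha^2$ produces
\[
Cb^{-1}\eps^{6+2\eta}\int_{t_0}^{t}\|v_{2,\alpha}(s)\|_\alpha^2\,ds
\;\le\;
Cb^{-1}\eps^{6+2\eta}\bigl(C\eps^{-6}\|v_1(t_0)\|^2+C\eps^{-3}\|v_{2,\alpha}(t_0)\|_\alpha^2\bigr),
\]
and with $b\sim\eps^3$ the second piece is $C\eps^{2\eta}\|v_{2,\alpha}(t_0)\|_\alpha^2$, which does \emph{not} decay in $t$. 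This extra term is absent from $\eqref{eq:T5E3}$ and, crucially, it obstructs the exponential decay conclusion $\eqref{eq:thm2_3}$ in the case $v_1\equiv 0$. The paper avoids this precisely because Gronwall was applied at the pointwise level \emph{before} squaring: the resulting $\eqref{eq:Gronwall1}$ carries only $\|v_{1,\alpha}\|_{W_\alpha}$ and the decaying initial data term on the right, so squaring (giving $\eqref{eq:young}$) and applying Cauchy--Schwartz plus the virial bound produces $\eqref{eq:T5E3}$ directly with no $\|v_{2,\alpha}\|_\alpha$ feedback to contend with. The fix for your argument is simply to insert that Gronwall step; the bounded multiplicative factor it introduces is $\exp\bigl(C\eps^{3+\eta}/b\bigr)\le\exp(C\eps^\eta)$, uniformly in $\eps$.
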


\begin{proof}
We use the variation of constants formula $\eqref{E11}$ to solve $\eqref{eq:stability_perturb}$.  From $\eqref{eq:gammadot_RHS}$ and the smallness assumptions in the statement of the theorem it follows that $|\dot{\gamma}| \le C\eps^{7/2 + \eta}$ for $t \in [t_0,t_1]$.  This together with the smallness assumption on the variation in $c$ yields $\dot{\tau}_\alpha(t) > c_\alpha(t_0) + c_\alpha(t) - c_\alpha(t_0) + \dot{\gamma}_\alpha(t) > 1 + C\bar{a}$.  Thus $\tau_\alpha(t) - \tau_\alpha(s) \ge t -s$ whenever $t_0 < s < t < t_1$, and in addition the use of $\eqref{eq:virial}$ is justified.

In light of the decay estimate $\eqref{eq:decay_toda}$ we have the estimate
 \begin{equation} 
\|Q_\alpha v_{2,\alpha}(t)\|_\alpha  \le Ce^{-\alpha b(t - t_0)} \| Q_\alpha v_{2,\alpha}(t_0)\|_\alpha  + C \int_{t_0}^t e^{- \alpha b (t-s)} \|G_\alpha(s)\|_\alpha ds. \label{eq:varconst}
\end{equation}
For ease of notation we have replaced $\tau_\alpha(t) - \tau_\alpha(s)$ with $t-s$ in the exponents.  This is justified by the remark just prior to $\eqref{eq:varconst}$.
Use $\eqref{eq:Q2}$ as well as the fact that the norm of $Q_\alpha$ is bounded uniformly in $\eps$ to obtain
\begin{equation}
\|v_{2,\alpha}(t) \|_\alpha \le Ce^{-\alpha b(t - t_0)}\|v_{2,\alpha}(t_0)\|_\alpha +  C \|v_{1,\alpha}(t) \|_{W_\alpha} + C \int_{t_0}^t e^{- \alpha b (t - s)} \|G_\alpha(s)\|_\alpha ds. \label{eq:varconst2}
\end{equation}

In order to proceed we make the estimate
\begin{equation} \label{eq:Gest}
\ba{lll}
\|G_\alpha(s)\|_\alpha 
& \le & C\|(Jg_\alpha(s))h_\alpha\|_\alpha + C \|\tilde{\ell}_\alpha(s)\|_\alpha + C( |c_\alpha(s)-c_\alpha(t_0)| + |\dot{\gamma_\alpha}(s)| + \eps^4) \|v_{2,\alpha}\|_\alpha \\ \\
& \le & C \eps^2 \|v_{1,\alpha} \|_{W_\alpha} +C (\|v_{1,\alpha}\| + \|v_{2,\alpha}\|)\|v_{2,\alpha}\|_\alpha + C\eps^{-1/2} |\dot{c}_\alpha | + \eps^{5/2} |\dot{\gamma}_\alpha| \\ \\
& & \qquad + \; C(|\dot{\gamma}_\alpha| + |c_\alpha(s) - c_\alpha(t_0)| + \eps^4) \|v_{2,\alpha}\|_\alpha + {\tt exp} \\ \\
& \le & C\eps^2 \|v_{1,\alpha}\|_{W_\alpha} + C\eps \|v_{1,\alpha}\|_{W_\alpha}^2 + C\eps \|v_{2,\alpha}\|_\alpha^2 \\ \\ 
& & \qquad + C (\|v_{1,\alpha}\| + \|v_{2,\alpha}\| + |c_\alpha(s) - c_\alpha(t_0)| + \eps^4) \|v_{2,\alpha}\|_\alpha  + {\tt exp} \\ \\
& \le & C \eps^2 \|v_{1,\alpha}\|_{W_\alpha} + C \eps^{3 + \eta}\| v_{2,\alpha}\|_\alpha + {\tt exp} \qquad t \in [t_0,t_1] \ea
\end{equation}

In the first line we have estimated $Q_\alpha( \tilde{L}_\alpha - L_\alpha)v_{2,\alpha}$ in $\eqref{eq:stability_perturb}$ and also used the fact that $\|Q_\alpha\|$ is bounded uniformly in $\eps$.  In the second line we have used equations $\eqref{eq:JgX}$ and $\eqref{eq:ldef}$. In the third line we have used equations $\eqref{eq:cdot_RHS}$ and $\eqref{eq:gammadot_RHS}$.  In the fourth line we have used the smallness assumptions in the statement of the theorem.

Substitute $\eqref{eq:Gest}$ into $\eqref{eq:varconst2}$ to obtain
$$ \ba{lll}
\|  v_{2,\alpha} (t) \|_\alpha & \le & Ce^{-b(t-t_0)} \|v_{2,\alpha}(t_0)\|_\alpha +  C \| v_{1,\alpha}(t) \|_{W_\alpha} C\eps^2 \int_{t_0}^t e^{-b(t-s)} \|v_{1,\alpha}(s)\|_{W_\alpha(s)} ds \\ \\ 
& & \qquad +   \eps^{3 + \eta} \int_{t_0}^t e^{-b(t-s)} \|v_{2,\alpha}(s)\|_\alpha ds + {\tt exp}
\ea $$

Apply the integral form of Gronwall's inequality to obtain
\begin{equation}
\ba{lll}
 \|v_{2,\alpha}(t)\|_\alpha & \le & Ce^{C (\eps^{3 + \eta})(1 - e^{-b(t-t_0)})/b}\left[\|v_{1,\alpha}(t)\|_{W_\alpha(t)} + e^{-b(t-t_0)} \|v_{2,\alpha}(t_0)\|_\alpha \right. \\ \\
 & & \qquad \; \left. + \eps^2\int_{t_0}^t e^{-b(t-s)} \|v_{1,\alpha}(s)\|_{W_\alpha} ds\right]  + e^{-bt}{\tt exp(t_0)} \\ \\
 & \le & C\left[ \|v_{1,\alpha}(t)\|_{W_\alpha(t)} + e^{-b(t-t_0)}\|v_{2,\alpha}(t_0)\|_\alpha \right. \\ \\
 & & \qquad \left. + \eps^2 \int_{t_0}^t e^{-b(t-s)} \|v_{1,\alpha}(s)\|_{W_\alpha} ds \right] + e^{-bt}{\tt exp}(t_0) \ea 
\label{eq:Gronwall1}
\end{equation}
Here we have used the fact that $b = \mathcal{O}(\eps^3) > C\eps^{3 + \eta}$ for $\eps$ small enough.  Use Young's inequality to obtain
\begin{equation}
\ba{lll} \|v_{2,\alpha}(t)\|_\alpha^2 & \le &
C\left[  \|v_{1,\alpha}(t)\|_{W_\alpha(t)}^2 + e^{-2b(t-t_0)}\|v_{2,\alpha}(t_0)\|_\alpha^2 \right. \\ \\
& & \qquad \left. + \eps^4 \left(  \int_{t_0}^t e^{-b(t-s)} \|v_{1,\alpha}(s)\|_{W_\alpha(s)} ds \right)^2 \right]+ e^{-2bt} {\tt exp}(t_0) 
\ea
\label{eq:young}
\end{equation}

Now compute
$$ \ba{lll}
\int_{t_0}^{t_1} \left( \int_{t_0} e^{-b(t-s)} \|v_{1,\alpha}(s)\|_{W_\alpha(s)} ds \right)^2 dt & 
\le & \int_{t_0}^{t_1} \left( \int_{t_0}^t e^{-b(t-s)}ds \int_{t_0}^t e^{-b(t-s)} \|v_{1,\alpha}(s)\|_{W_\alpha(s)}^2 ds \right) dt \\ \\
& \le & b^{-1} \int_{t_0}^{t_1} \|v_{1,\alpha}(s)\|_{W_\alpha(s)}^2 \int_s^{t_1} e^{-b(t-s)} dt ds\\ \\ 
& \le & b^{-2} \int_{t_0}^{t_1} \|v_{1,\alpha}(s)\|_{W_\alpha(s)}^2 ds
\ea $$

In the second line we have used the Cauchy-Schwartz inequality and changed the order of integration.  Recall that $b = \mathcal{O}(\eps^3)$ and substitute the above into $\eqref{eq:young}$ to obtain
\begin{equation}
 \int_{t_0}^{t_1} \| v_{2,\alpha} (t) \|_\alpha^2 dt \le C \left[ \eps^{-2} \int_{t_0}^{t_1} \| v_{1,\alpha}(t) \|_{W_\alpha(t)}^2 dt + \eps^{-3}\|v_{2,\alpha}(t_0)\|_\alpha^2 + e^{-bt}{\tt exp}(t_0)\right]
\label{eq:L2_bound}
\end{equation}
Apply Lemma $\ref{Lem:Virial}$, and in particular $\eqref{eq:virial}$ to see that 
\begin{equation}
\int_{t_0}^{t_1} \|v_{2,\alpha} (t)\|_\alpha^2 dt  \le C\left[  \eps^{-6} \|v_1(t_0)\|_{W_\alpha(t_0)}^2 + \eps^{-3} \|v_{2,\alpha}(t_0)\|_\alpha^2 + {\tt exp}(t_0)\right]. \label{eq:L2_2}
\end{equation}
Integrate equation $\eqref{eq:cdot_RHS}$ and use $\eqref{eq:L2_2}$ and $\eqref{eq:virial}$ to obtain $\eqref{eq:T5E1}$.  Substitute $\eqref{eq:T5E1}$ into equation $\eqref{apriori1}$ to obtain $\eqref{eq:T5E2}$.
To obtain $\eqref{eq:T5E3}$, apply the Cauchy-Schwartz inequality to the integral term in $\eqref{eq:young}$ and use $\eqref{eq:virial}$.  
This completes the proof. 
\end{proof}

\begin{rmk}
It might appear from the proof that the cross terms associated with $u_{c_{-\alpha}}$ do not appear in the estimates for $v_\alpha$.  All of these cross terms are exponentially small, and incorporated into the term {\tt exp}.
\end{rmk}

Theorem $\ref{thm:Orbital_Stability}$ may now be regarded as a corollary of Theorem $\ref{T5}$.

\begin{proof}[Proof of Theorem $\ref{thm:Orbital_Stability}$]
Proposition $\ref{Ptube}$ applies as long as $\|v_{2,+}\|_+ +\|v_{2,-}\|_- + \|v_1\| < C\eps^{5/2 + \eta}$.  Since this inequality is satisfied by hypothesis at $t = t_0$ and we have continuous dependence on initial conditions, it follows that tubular coordinates remain valid on some interval $[t_0,t_1]$ with $t_1 > t_0$.  Thus we may apply Theorem $\ref{T5}$ to obtain $\eqref{eq:T5E1}$,$\eqref{eq:T5E2}$, and $\eqref{eq:T5E3}$ for $t \in [t_0,t_1]$.  It is straightforward to see that so long as $\|v_1(t_0)\| < C\eps^{9/2 + \eta}$ and $\|v_{2,\alpha}(t_0)\|_\alpha < C\eps^{3 + \eta}$ with $0 < \eta < 1/2$ then we may take $t_1 = \infty$.  In particular $\eqref{eq:T5E2}$ holds for all $t > t_0$.  This establishes $\eqref{eq:Thm2_1}$.  That $c_\alpha$ converges follows from the fact that $\dot{c}_\alpha$ is integrable.  

From $\eqref{eq:gammadot_RHS}$ we see that to show that $\dot{\gamma} \to 0$, it suffices to show that $\|v_{1,\alpha}\|_{W_\alpha}$ and $\|v_{2,\alpha}\|_\alpha$ each go to zero.
To see that $\| v_{1,\alpha}(t)\|_{W_\alpha(t)} \to 0$ we compute
$$ \frac{d}{dt} \| v_{1,\alpha}(t)\|^2_{W_\alpha(t)} \le \| v_1(t)\| \| JH'(v_1(t)) \|_{W_\alpha} + |\dot{\tau}(t)| \| v_1(t) \|_{W_\alpha(t)} \le C$$
This, together with $\eqref{eq:virial}$ and in particular the fact that $\| v_{1,\alpha}(t)\|_{W_\alpha}$ is square-integrable imply that $\| v_{1,\alpha}(t) \|_{W_\alpha(t)} \to 0$ as desired.  Similarly, 
\[ \frac{d}{dt} \|v_{2,\alpha}\|_\alpha^2 \le 2 a \alpha \dot{\tau}_\alpha \|v_{2,\alpha}\|_\alpha^2 + 2 \|v_{2,\alpha}\|_\alpha \| \partial_t v_{2,\alpha}\|_\alpha. \]
Since $v_{2,\alpha}$ solves $\eqref{eq:perturb}$ and $\eqref{eq:T5E3}$ holds, all of the terms on the right are bounded; this together with the fact that $\|v_{2,\alpha}\|_\alpha$ is square-integrable implies that $\|v_{2,\alpha}(t)\|_\alpha \to 0$ as $t \to \infty$ as desired.
It remains only to establish $\eqref{eq:thm2_2}$ and $\eqref{eq:thm2_3}$, both of which follow from $\eqref{eq:T5E3}$.
This completes the proof.
\end{proof}

\section*{Acknowledgements}
The authors thank T. Mizumachi and R.L. Pego for helpful discussions, in particular regarding their unpublished work \cite{MP}.  They also thank T. Mizumachi for suggesting the method of proof used in Proposition $\ref{prop:Toda_sg}$.  This work was funded in part by the National Science Foundation under grants DMS-0603589 and DMS-0405724.

\appendix
\section{Uniform Bounds in the Small $\eps$ Regime}

\subsection{Uniform Bounds in the Small $\eps$ Regime for Higher Derivatives of the Wave Profile}

\begin{proof}[Proof of Lemma $\ref{lem:small_eps_1}$]

First we observe some facts about the weighted space $H^s_a$ with norm $\| f(\cdot)  \|_{H^s_a} = \| f(\cdot) \|_{H^s} + \| e^{a \cdot} f(\cdot) \|_{H^s}$.  For $L \in \mathcal{L}(H^s_a)$ we estimate
$\|L \psi\|_{H^s_a} = \|L \psi \|_{H^s} + \| L_a \psi_a \|_{H^s}$ where $L_a := e^{a \cdot} L e^{-a \cdot}$ and $\psi_a(\cdot) = e^{a \cdot} \psi(\cdot)$.  Thus 
\begin{equation}
\|L\|_{\mathcal{L}(H^s_a)} \le \| L \|_{\mathcal{L}(H^s)} + \|L_a\|_{\mathcal{L}(H^s)}.
\label{eq:H1a}
\end{equation}
Also observe that whenever $P$ is a pseudodifferential operator with symbol $p$, then $\| P \|_{\mathcal{L}(H^s)} \le \| p \|_{L^\infty(\R,\mathbb{C})}$ for any $s \ge 0$.  Furthermore, if we denote the symbol of $P_a := e^{a \cdot} P e^{-a \cdot}$ by $p_a$ then $p_a(\xi) = p(\xi + ia)$.  Thus to bound the operator norm of a pseudodifferential operator (as in $(1)$) in $H^s_a$ it suffices to bound its symbol uniformly in a strip $|\mathrm{Im} \xi| \le a$ in the complex plane.  A slight modification of Lemma 3.3 in \cite{PF1} yields this bound.  

More explicitly, let $p(\xi) = \frac{\sinc^2(\xi/2)}{c^2 - \sinc^2(\xi/2)}$ with $c^2 = 1 + \eps^2 \beta / 12$ 
Denote by $p_*$ the {\it critical} part of $p$ given by contribution of residues.  Friesecke and Pego compute $p_*(\xi) = \frac{12(1 + \eps^2 \alpha_1(\eps^2))}{\xi^2 + \eps^2(1 + \eps^2 \beta_1(\eps^2))}$ where $\alpha_1$ and $\beta_1$ are analytic functions.  Lemma 3.2 in \cite{PF1} establishes that 
$$| p(\xi) - p_*(\xi)| < \frac{C_*}{1 + \xi^2} \qquad \mbox{ for } | \mathrm{Im} \xi | < b_* \qquad \mbox{ with } b_* < \kappa$$
Here the symbol $p$ has poles at $\pm i\kappa$ and is analytic on the strip $| \mathrm{Im} \xi | < \kappa$. 

One may now define $p_0(\xi) := \frac{12}{\xi^2 + \eps^2}$ as in the statement of Lemma 3.3 in \cite{PF1} and compute 
$$ p_*(\xi) - p_0(\xi) = \frac{12 \alpha_1(\eps^2)}{(\xi/\eps)^2 + 1 + \eps^2 \beta_1(\eps^2)} - \frac{\beta_1(\eps^2)}{((\xi/\eps)^2 + 1)((\xi/\eps)^2 + 1 + \eps^2\beta_1(\eps^2))}.$$ 
In particular $|p_*(\eps \xi) - p_0(\eps \xi)|$ is bounded uniformly in a strip of width $\eta$ for any $\eta < 1$.  In particular, we take $\eta = 1 / 2$.

It now follows from the triangle inequality that $|p(\eps \xi) - p_0(\eps \xi)|$ is bounded uniformly on $| \mathrm{Im} \xi | < 1/2$.  Now observe that $p^{\eps}(\xi) - p^0(\xi) = \eps^2 (p(\eps \xi) - p_0(\eps \xi))$.  This proves the first assertion in the statement of the lemma.

To prove the second assertion differentiate $p^0(\xi) = \frac{12}{\xi^2 + \beta}$ and $p^\eps(\xi) = \frac{12 \sinc^2(\eps \xi/2)} {\beta + \frac{12}{\eps^2}( 1 - \sinc^2(\eps \xi / 2))}$ with respect to $\beta$ to obtain 
$$\partial_\beta^k p^0(\xi) = (-1)^k k! (\xi^2 + \beta)^{-(k - 1)} p^0(\xi) \qquad \mbox{and} \qquad \partial_\beta^k p^\eps(\xi) = (-1)^k k! (\beta + \frac{12}{\eps^2}(1 - \sinc^2(\eps \xi/2)))^{-(k-1)}p^\eps(\xi).$$  
In light of this computation, the fact that $\frac{12}{\eps^2}(1 - \sinc^2(\eps \xi /2)) = \xi^2 + \mathcal{O}(\eps^2)$, and the fact that we have proven the first assertion in the statement of the lemma, the second assertion now follows.  

To prove the third assertion in the statement of the lemma, write $N^{(\eps)}(x) = N^{(0)}(x) + \frac{1}{2}x^2 \eta(\eps^2 x)$ and compute
$\partial(N^{\eps}(x) - N^0(x)) = x\eta(\eps^2 x) + \frac{1}{2} \eps^2 x^2 \eta'(\eps^2 x)$ and 
$\partial^2(N^\eps(x) - N^0(x)) = \eta(\eps^2 x) + 2\eps^2 x \eta'(\eps^2 x) + \frac{1}{2} \eps^4 x^2 \eta''(\eps^2 x)$.  Also compute $N^0(x) - N^0(y) = \frac{1}{2}(x + y)(x - y)$, $\partial N^0(x) - \partial N^0(y) = x - y$ and $\partial^2 N^0(x) - \partial^2 N^0(y) = 0$.  Now use the triangle inequality together with the fact that $\eta$ is smooth with $\eta(0) = 0$ and the fact that to bound the $H^s$ norm of a multiplication operator it suffices to bound the multiplier to conclude that 
$$\| \partial^k N^\eps (x) - \partial ^k N^0(y) \|_{H^1_a} \le \| x - y\|_{H^1_a} + K\eps^2 \qquad \mbox{ uniformly for }  \| x \|_{H^s_a} + \| y \|_{H^s_a} < 2 \qquad \mbox{ with } k = 0,1,2  $$
as desired.

To prove the fourth assertion in the statement of the lemma, let $P_a$ denote the pseudodifferential operator with symbol $p_a(\xi) = p^0(\xi + ia) = \frac{12}{(\xi+ia)^2 + \beta}$.  We claim that 
\begin{equation}
e^{a \cdot} (I - P^0 \partial N^0(\phi_\beta))^{-1} e^{-a \cdot} = (I - P_a \partial N^0(\phi_\beta))^{-1}.
\label{eq:normeq}
\end{equation}
To prove the claim, first observe that for any operator $L$ with $1 \in \rho(L)$ we have $e^{a \cdot} (I - L)^{-1} e^{-a \cdot} = (I - L_a)^{-1}$ where $L_a = e^{a \cdot} L e^{-a \cdot}$.  Thus it suffices to show that $e^{a \cdot} P^0 \partial N(\phi_\beta) e^{-a \cdot} = e^{a \cdot} P^0 e^{-a \cdot} \partial N(\phi_\beta)$, i.e. that the operator given by multiplication with $e^{-ax}$ commutes with $\partial N^0(\phi_\beta)$.  But $\partial N^0(\phi_\beta)$ is just multiplication by $\phi_\beta$ and multiplication operators commute, so the claim is established.

In light of $\eqref{eq:H1a}$ is suffices to show that $Q_a := (I - P_a \partial N^0(\phi_\beta))^{-1}$ is bounded uniformly in $\mathcal{L}(E^s)$.  Write
$$Q_a  = \left(I - (I-P^0\partial N^0(\phi_\beta))^{-1}(P_a - P^0)\partial N(\phi_\beta)\right)^{-1}(I - P^0 \partial N^0 (\phi_\beta))^{-1} $$ and use the fact that the symbol $p^0$ is analytic in a strip $| \mathrm{Im} z |  < b$, to see (for example from the Neumann series) that the function $a \mapsto \| Q_a \|_{\mathcal{L}(H^1)}$ is continuous on an interval of the form $[0,a_{max}]$, hence achieves its maximum.  Thus $\|Q_a\|$ is bounded uniformly for $a < a_{max}$, as desired.  This proves the fourth assertion in the lemma and thus completes the proof of the lemma.
\end{proof}

\subsection{Uniform Bounds in the Small $\eps$ Regime for the Constant in the Toda Semigroup Decay Estimate}
To prove Proposition $\ref{prop:Toda_sg}$ we rely heavily on \cite{MP}, which itself makes use of \cite{PF4}.  The main result of \cite{MP} is $\eqref{eq:decay_toda}$, in fact all of the claims of Proposition $\ref{prop:Toda_sg}$ are proven in \cite{MP} except the claims that $a = \mathcal{O}(\eps)$ and $b = \mathcal{O}(\eps^3)$ which are proven in \cite{PF1} and \cite{PF4} respectively and the claim that $K$ may be chosen uniformly in $c$, which we prove in this paper.  A refinement of the proof used in \cite{MP} yields the result.  The method of proof is as follows.  Lemma 3 of \cite{MP} shows that the linear evolution equation
\begin{equation}
w_t = JH''(0)w \label{eq:Toda_0}
\end{equation}
is exponentially stable in weighted spaces, i.e. admits semigroup estimates of the form $\eqref{eq:decay_toda}$.  Proposition 7 in \cite{MP} constructs a B\"{a}cklund transformation which conjugates the flow of $\eqref{eq:Toda_lin}$ on the space of exponentially localized profiles which satisfy $\eqref{eq:toda_perp}$ with that of $\eqref{eq:Toda_0}$ on the uniform space $\ell^2 \times \ell^2$.  The important observation that it suffices to study the B\"{a}cklund transformation at $t = 0$ is made in Corollary 8.  Finally, in Corollary 10 of \cite{MP} it is shown that the B\"{a}cklund transformation and its inverse are bounded, thus the estimates $\eqref{eq:decay_toda}$ which are valid for $\eqref{eq:Toda_0}$ are necessarily also valid (with some constant $K(c)$ which may now depend on $c$) for the equation $\eqref{eq:Toda_lin}$.  With this in mind, to prove Proposition $\ref{prop:Toda_sg}$ it suffices to show that the B\"{a}cklund transformation and its inverse are bounded uniformly for $c > 1$.  

Let $\kappa = \kappa(c)$ be the unique root of $\sinh \kappa = \kappa  c$, let $Q_c(n,t) = \log \frac{\cosh(\kappa (n-ct))}{\cosh(\kappa(n - ct +1))}$ be the Toda soliton with speed $c$ and let $A : \ell^2_a \to \ell^2_a$ be the multiplication operator $e^{Q_c(\cdot,0)}$, given more explicitly by $[Ax]_n = \frac{\cosh(\kappa n)}{\cosh(\kappa(n+1))} x_n$.  Because we work in slightly different coordinates than \cite{MP}, the B\"{a}cklund transformation that we study is $\Psi_c$ given in the notation of \cite{MP} by $\Psi_c = \Lambda \Phi_c(0) \Lambda^{-1}$ and more explicitly by $\Psi_c(r,p) = (r',p')$ and $\Psi_c^{-1}(r',p') = (r,p)$ where 
\begin{equation}
\left\{ \ba{l} 
C\delta^{-1} r' = p + \bar{C} \delta^{-1} r \\ \\
p' = \tilde{C}\delta^{-1} r - \hat{C}\delta^{-1} r 
\ea \right.
\qquad \mbox{ and } \qquad
\left\{ \ba{l}
\hat{C} \delta^{-1} r = \tilde{C}\delta^{-1} r' - p' \\ \\
p = C\delta^{-1} r' - \bar{C} \delta^{-1} r
\ea \right.
\label{eq:psi_def}
\end{equation}
Here 
\begin{equation}
\delta := (S - I), \qquad \hat{C} := A - SA^{-1}, \qquad C := A - A^{-1}S^{-1}, \qquad \tilde{C} := A - SA^{-1}S^{-1}, \mbox{ and } \bar{C} = A - A^{-1}.
\label{eq:Cdef}
\end{equation}
The coordinates $(r,p)$ evolve according to the Toda model linearized about a solitary wave with speed $c$ and must satisfy the orthogonality condition $\eqref{eq:toda_perp}$ while the coordinates $(r',p')$ evolve according to the Toda model linearized about the $0$ solution and may live anywhere in $\ell^2_a(\Z^2)$.

Equation $\eqref{eq:psi_def}$ is exactly equation $(29)$ in \cite{MP}, rewritten in $(r,p)$ coordinates.  Note that $\Psi_c$ does depend on $c$.  Other than $\delta$, each of the operators defined in $\eqref{eq:Cdef}$ depends on $A$ which is defined in terms of $\kappa$, which in turn is defined in terms of $c$.

Proposition $\ref{prop:Toda_sg}$ may now be regarded as a corollary of the following lemma, together with the fact that $\|\delta^{-1}\|_{\mathcal{L}(\ell^2_a,\ell^2_a)} = \frac{1}{1-e^{-a}}$, that $\kappa = \mathcal{O}(\eps)$ and that $a = \mathcal{O}(\eps)$.
\begin{lemma} \label{lem:psi_inv}
Let $\delta$, $C$ $\tilde{C}$, $\hat{C}$, and $\bar{C}$ be defined by $\eqref{eq:Cdef}$.  Then there is a constant $K$, independent of $c$ such that the following hold
\begin{enumerate}
\item  $\| \bar{C} \|_{\mathcal{L}(\ell^2_a)} + \| \tilde{C} \|_{\mathcal{L}(\ell^2_a)} < K \kappa$.  
\item $\| C\delta^{-1} \|_{\mathcal{L}(\ell^2_a)} + \| \hat{C}\delta^{-1} \|_{\mathcal{L}(\ell^2_a)} < K$.  
\end{enumerate}
Suppose in addition that $r$, $p$, $r'$, and $p'$ satisfy equations $\eqref{eq:psi_def}$ with $(r,p)$ satisfying $\eqref{eq:toda_perp}$.  Then 
\begin{enumerate}
\setcounter{enumi}{2}
\item $\| r' \|_{\ell^2_a} < K \| p + \bar{C}\delta^{-1} r \|_{\ell^2_a}$.  
\item $\| r \|_{\ell^2_a} < K \|\tilde{C}\delta^{-1}r' - p'\|_{\ell^2_a}$.
\end{enumerate}
\end{lemma}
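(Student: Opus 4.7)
The plan is to handle the four claims in order, treating (1) and (2) as essentially algebraic and saving the orthogonality $\eqref{eq:toda_perp}$ for (3) and (4). For (1), observe that $\bar{C}=A-A^{-1}$ and $\tilde{C}=A-SA^{-1}S^{-1}$ are both diagonal multiplication operators once the shifts are expanded, so their $\mathcal{L}(\ell^2_a)$ norms are the suprema of their symbols. The identity $\cosh^2 x-\cosh^2 y=\sinh(x+y)\sinh(x-y)$ together with the $\sinh$ addition formula collapses the symbol of $\bar{C}$ to $-\sinh(\kappa)\bigl(\tanh(\kappa n)+\tanh(\kappa(n+1))\bigr)$, and a short computation using $\cosh a-\cosh b=-2\sinh\tfrac{a+b}{2}\sinh\tfrac{a-b}{2}$ gives the symbol of $\tilde{C}$ as $2\sinh(\kappa)\tanh(\kappa(n+1))$. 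Each is bounded in absolute value by $2\sinh\kappa\le K\kappa$ on the bounded range of $\kappa$, which gives (1).

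For (2) the plan is to isolate the potentially singular pieces of $\hat{C}\delta^{-1}$ and $C\delta^{-1}$. Direct computation gives $\hat{C}=\bar{C}-\delta A^{-1}$ and $C=\bar{C}+A^{-1}(I-S^{-1})$. Using $(I-S^{-1})=S^{-1}\delta$, hence $(I-S^{-1})\delta^{-1}=S^{-1}$, and the splitting $\hat{C}=\tilde{C}-B\delta$ where $B$ is multiplication by the uniformly bounded symbol $\cosh(\kappa(n+2))/\cosh(\kappa(n+1))$, one obtains
\[
\hat{C}\delta^{-1}=\tilde{C}\delta^{-1}-B,\qquad C\delta^{-1}=\bar{C}\delta^{-1}+A^{-1}S^{-1}.
\]
The operators $A^{\pm 1}$, $S^{-1}$, and $B$ all have multipliers bounded uniformly in $\kappa$, and from (1) together with $\|\delta^{-1}\|_{\mathcal{L}(\ell^2_a)}=(1-e^{-a})^{-1}$ we get $\|\bar{C}\delta^{-1}\|+\|\tilde{C}\delta^{-1}\|\le K\kappa/(1-e^{-a})$, which is uniformly bounded once $a$ is fixed and $\kappa$ is restricted to the allowed interval. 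This gives (2).

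For (3) and (4) the main obstacle arises. A naive attempt to invert $C\delta^{-1}$ via the factorization $C\delta^{-1}=A^{-1}S^{-1}(I+SA\bar{C}\delta^{-1})$ runs into the bound $\|SA\bar{C}\delta^{-1}\|=\mathcal{O}(\kappa/a)$, which need not be less than one under the standing hypothesis $a<\kappa(c_{lower})/2$, so the Neumann series diverges. The orthogonality $\eqref{eq:toda_perp}$ is exactly what rescues the situation: it restricts $(r,p)$ to the symplectic complement of the two-dimensional neutral subspace spanned by $\tilde{\xi}_{1,+}=\partial_x u_{c^*}^T$ and $\tilde{\xi}_{2,+}=\partial_c u_{c^*}^T$, which is the kernel of the linearized Toda generator and the subspace on which the B\"acklund transform of Proposition~7 in \cite{MP} is bijective. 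The plan is to decompose $r'=\alpha_1\tilde{\xi}_{1,+}+\alpha_2\tilde{\xi}_{2,+}+\Pi r'$, apply $\omega_+(\tilde{\xi}_{i,+},\cdot)$ to the first equation of $\eqref{eq:psi_def}$ to solve for the $\alpha_i$ using the uniform lower bound $|\omega_+(\tilde{\xi}_{1,+},\tilde{\xi}_{2,+})|\ge K^{-1}\kappa$ coming from Lemma~9.1 of \cite{PF1}, and then bound $\Pi r'$ via (1), (2), and a uniformly bounded inverse of $C\delta^{-1}$ on the symplectic complement. Estimate (4) follows symmetrically by inverting the second equation of $\eqref{eq:psi_def}$.

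The hardest step will be verifying that the inversion on the symplectic complement is truly uniform in $c$: the neutral subspace itself drifts with $\kappa$ and its symplectic self-pairing vanishes at rate $\kappa$, so every $\kappa^{-1}$ arising from the projection must be matched by a compensating factor of $\kappa$ coming from (1) or from the $\kappa$-structure of the orthogonality. This careful $\kappa$-bookkeeping, rather than any single analytic estimate, is where the novelty relative to \cite{MP} lies.
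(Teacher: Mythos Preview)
Your treatment of items (1) and (2) is correct and essentially matches the paper's: both of you recognize $\bar{C}$ and $\tilde{C}$ as multiplication operators with symbols bounded by $2\sinh\kappa$, and both decompose $C\delta^{-1}$ and $\hat{C}\delta^{-1}$ into $\bar{C}\delta^{-1}$ (or $\tilde{C}\delta^{-1}$) plus a uniformly bounded remainder. Fine.

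For (3) and (4), however, there is a genuine gap, and the plan is misconceived. You propose to decompose $r' = \alpha_1\tilde{\xi}_{1,+} + \alpha_2\tilde{\xi}_{2,+} + \Pi r'$ and to apply $\omega_+(\tilde{\xi}_{i,+},\cdot)$ to the first equation of $\eqref{eq:psi_def}$. But $\tilde{\xi}_{i,+}$ are two-component vectors in $\ell^2_a(\Z,\R^2)$ and $\omega_+$ is a form on pairs $(r,p)$, whereas $r'$ is a single scalar sequence and the first equation of $\eqref{eq:psi_def}$ is a scalar equation, so neither the decomposition nor the pairing makes sense as written. More importantly, the orthogonality $\eqref{eq:toda_perp}$ does not enter by projecting $r'$; it enters because $C\delta^{-1}r' = p + \bar{C}\delta^{-1}r$ is a \emph{single} first-order linear difference equation whose operator $C$ has range of codimension one, and $\eqref{eq:toda_perp}$ on $(r,p)$ is precisely the condition that the right-hand side $y := p + \bar{C}\delta^{-1}r$ lies in that range. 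Your proposal also leaves the crucial step --- uniform invertibility on the symplectic complement --- entirely unproven, and indeed concedes this.

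The paper's route is far more direct and avoids the delicate $\kappa$-bookkeeping you anticipate. Write $q_n = \cosh(\kappa n)$ and solve $Cz = y$ by the discrete integrating factor: $q_{M+1}^2 z_M - q_{m+1}^2 z_m = \sum_{n=m}^{M-1} (q_{n+1}^3/q_n)\, y_n$. Letting $M\to\infty$ or $m\to -\infty$ gives two explicit formulas for $z_k$; their agreement is exactly the codimension-one solvability condition $\sum_{n\in\Z}(q_{n+1}^3/q_n)\,y_n = 0$, which follows from $\eqref{eq:toda_perp}$. Applying $\delta$ yields an explicit expression for $r'_k$ as a local term plus a one-sided sum with kernel bounded by $C\kappa e^{-2\kappa|n-k|}$. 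Choosing the right-summing representation for $k\ge 0$ and the left-summing one for $k<0$, the Hausdorff--Young inequality with $a < 2\kappa$ gives $\|r'\|_{\ell^2_a} \le C(1 + \kappa\|e^{-(2\kappa-a)\cdot}\|_{\ell^1})\|y\|_{\ell^2_a} \le C\|y\|_{\ell^2_a}$ uniformly in $c$. Item (4) is symmetric: $\hat{C}$ has one-dimensional kernel, the free parameter $z_0$ in the solution of $\hat{C}z=y$ is fixed by $\eqref{eq:toda_perp}$, and the same convolution estimate applies.
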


\begin{proof}[Proof of Lemma $\ref{lem:psi_inv}$]
We bound each expression in turn.  Let $q_n = \cosh(n \kappa)$.  Then $[\bar{C} x]_n = \frac{q_n^2 - q_{n+1}^2}{q_nq_{n+1}} \approx \pm 2 \sinh \kappa \approx \pm 2 \kappa$.  Similarly
$[\tilde{C}x]_n = \frac{q_n - q_{n+2}}{q_{n+1}} \approx \pm 2 \sinh \kappa \approx \pm 2 \kappa$.
We write $C\delta^{-1} = \bar{C}\delta^{-1} + A^{-1}S^{-1}$ to bound $C\delta^{-1}$.  Also write
$\hat{C}\delta^{-1} = \bar{C} + A^{-1}S^{-1} + (A^{-1}S - SA^{-1})\delta^{-1}$.  The first two of these terms have already been bounded.  To bound the last term note
$[(A^{-1}S - SA^{-1})x]_n = \frac{q_{n+1}^2 - q_nq_{n+2}}{q_n q_{n+1}} = o(\kappa)$.  This completes the proof of $1.$ and $2.$ above.

To establish $3$ and $4$ we solve the equations $C\delta^{-1}x = y$ and $\hat{C}\delta^{-1}x = y$ for $x$ in terms of $y$.  This is not always possible because $C$ has a one-dimensional kernel and its formal adjoint $\hat{C}$ has a one-dimensional cokernel.  This will manifest in our expressions for $x$ as either a consistency condition that $x$ must satisfy or the presence of a free parameter in the solution for $x$.  However, it is known that the map $\Psi_c$ defined by $\eqref{eq:psi_def}$ is an isomorphism from the subspace of $\ell^2_a(\Z^2)$ on which $\eqref{eq:toda_perp}$ holds to $\ell^2_a(\Z^2)$.  Thus in the context of conditions 3. and 4., the consistency equation will always hold and the free parameter will be fixed by $\eqref{eq:toda_perp}$. 

We now invert $C \delta^{-1} x = y$.  The equation $Cz = y$ is nothing more than a non-autonomous first order linear difference equation and thus may be solved (e.g. with a discrete integrating factor) to obtain 
$q_{M+1}^2 z_M - q_{m+1}^2 z_m = \sum_{n = m}^{M-1} \frac{q_{n+1}^3}{q_n} y_n$ which is valid for any integers $m$, $M$.  Taking either $M \to \infty$ OR $m \to -\infty$ we obtain the expression
$z_k = -\sum_{n = k}^\infty \frac{q_{n+1}^3}{q_n q_k^2} y_n = \sum_{n = -\infty}^{k-1} \frac{q_{n+1}^3}{q_n q_k^2} y_n$.  The fact that the range of $C$ has codimension $1$ manifests in the condition that $Cz = y$ may only be solved when $\sum_{n \in \Z} \frac{q_{n+1}^3}{q_n} y_n = 0$.  One may show that this condition is satisfied for $y = p + \bar{C}\delta^{-1}r$ so long as $(r,p)$ satisfy $\eqref{eq:toda_perp}$.  Applying $\delta$ to the result we see that 
\begin{equation}
x_k = \frac{q_{k+1}^3}{q_k^3} y_k + \frac{q_{k+1}^2 - q_k^2}{q_k^2q_{k+1}^2} \sum_{n = k+1}^\infty \frac{q_{n+1}^3}{q_n} y_n  = \frac{q_{k+1}}{q_k} y_k + \frac{q_k^2 - q_{k+1}^2}{q_k^2q_{k+1}^2} \sum_{n = -\infty}^{k-1} \frac{q_{n+1}^3}{q_n} y_n. 
\label{eq:Cinv}
\end{equation}
One may now choose the first representation of $x$ for $k \ge 0$, the second for $k < 0$, regard each as a convolution by changing the index $n$ to $k-n$, estimate that the coefficients in the sums in $\eqref{eq:Cinv}$ are bounded by $C\kappa e^{-2 \kappa}$, and finally apply the Hausdorff-Young inequality for convolutions to see that
$$ \| x \|_{\ell^2_a} \le C(1 + \kappa \| e^{-(2\kappa -a)\cdot} \|_{\ell^1}) \| y \|_{\ell^2_a} \le C \| y\|_{\ell^2_a}.$$
In the last inequality we have used that $a < 2\kappa$.  This establishes assertion 3.  The proof of $4.$ is similar.  We may solve $\hat{C} z = y$ to obtain
$$z_k = \left\{ \ba{ll} \frac{q_{k+1}q_{k+2}}{q_1 q_2} z_0 + \sum_{n = 0}^{k-1} \frac{q_{k+1}q_{k+2}}{q_{n+2}^2} y_n & k > 0 \\ \\ 
\frac{q_{k+1}q_{k+2}}{q_1q_2} z_0 - \sum_{n = k}^{-1} \frac{q_{k+1}q_{k+2}}{q_{n+2}^2} y_n & k < 0
\ea \right. 
$$
Here $z_0$ appears as a free parameter which is fixed by the orthogonality condition $\eqref{eq:toda_perp}$ in our case, i.e. when $z = \delta^{-1}r$ and $\sum_{n \in \Z} \frac{q_{n+1}^3}{q_n} y_n = 0$ for $y = p + \bar{C}\delta^{-1}r$.  One may now prove assertion 4 using ideas similar to the above.

\end{proof}

\subsection{Uniform Bounds in the Small $\eps$ Regime for the Neighborhood on which Tubular Coordinates are Valid}

\begin{proof}[Proof of Proposition $\ref{Ptube}$]

Let $T_0$ be given as in Lemma $\ref{L6}$.  Assume the conditions 
$$ (i) \; |c_\alpha - c_\alpha^*| < C\eps^{2 + \eta} \qquad  (ii) \;  \alpha \tau_\alpha > T_0 $$
and define
$$\mathcal{Q} := \{ (\tau_+,\tau_-,c_+,c_-,u,\tilde{u}) \in \R^4 \times \ell^2(\Z,\R^2) \times \ell_a^2(\Z,\R^2) \cap \ell^2_{-a}(\Z,\R^2) \; | \; (i) \mbox{ and } (ii) \mbox{ hold} \}.$$
Define $F : \mathcal{Q} \to \R^4$ by $F(\tau,c,u,\tilde{u}) = \mathrm{col} \left(F_{1+},F_{2+},F_{1-},F_{2-} \right)$ with $ F_{1,\alpha} = \omega_\alpha(\xi_{1,\alpha},(u-\hat{u})h_\alpha)$ and $F_{2,\alpha} = \omega_\alpha(\xi_{2,\alpha}(\tilde{u}-\hat{u})h_\alpha)$.
Here $\xi_{i,\alpha}$ is evaluated at $(\tau_\alpha,c_\alpha)$ and $\hat{u}$ is evaluated at $(\tau_+,\tau_-,c_+,c_-)$, which we abbreviate as $(\tau,c)$.  Observe that if we regard $u$ as the solution of $\eqref{FPU}$ under study and $v_1$ as another exact solution of $\eqref{FPU}$ intended to capture the part of the perturbation which is not exponentially localized, then upon denoting 
$$v_2 := \tilde{u} - \hat{u},  \qquad v_1 := u - \tilde{u}, \qquad \mbox{and} \qquad v_{i,\alpha} = v_i h_\alpha \mbox{ for } i=1,2$$ 
then $v_{2,\alpha}$ satisfies the orthogonality conditions $\eqref{Perp1}$ and $\eqref{Perp2}$ for $\alpha \in \{+,-\}$ if and only if $F(\tau,c,u,\tilde{u}) = 0$.  We remark that it is natural to proceed by applying the implicit function theorem to find the zero set of $F$ near $(\tau^*,c^*,\hat{u},\hat{u})$.  In fact, this has been done in \cite{Mizumachi} and \cite{PF2}.  However, small $\eps$ asymptotics are not of interest there, and the naive estimates from the implicit function theorem on the size of the neighborhood on which these coordinates are valid are not sufficient for our purposes.  Rather, we construct an explicit contraction mapping and exploit the structure of $F$ to show that the coordinates $\tau$, $c$, and $v_2$ are defined so long as $\eqref{eq:usmall}$ holds.

Before we proceed it is convenient to establish the small $\eps$ asymptotics for $F$ and its derivatives.  Let $A$ be the matrix studied in Lemma $\ref{L6}$ and let \\ \\
$C_\alpha = \left( \ba{cc} \omega_\alpha( \partial_\tau \xi_{1,\alpha}, (u - \hat{u})h_\alpha) & 
- \omega_\alpha( \partial_c \xi_{1,\alpha}, (u - \hat{u})h_\alpha) \\ 
\omega_\alpha(\partial_\tau \xi_{2,\alpha}, (\tilde{u} - \hat{u})h_\alpha) & \omega_\alpha(\partial_c \xi_{2,\alpha}, (\tilde{u} - \hat{u})h_\alpha) \ea \right)$ and $C = \left( \ba{cc} C_+ & 0 \\ 0 & C_- \ea \right)$ so that \\
$D_{(\tau,c)} F (\tau,c,u,\tilde{u}) = A + C + {\tt exp}$.
Use $\eqref{eq:power_rules}$ to see that when 
\begin{equation} 
\| u - \hat{u} \| + \sum_{\alpha \in \{+,-\}} \| (\tilde{u} - \hat{u})h_\alpha \|_\alpha \le K \eps^{5/2 + \eta} \qquad \eta > 0,
\label{eq:usmall2}
\end{equation}
which corresponds to $\eqref{eq:usmall}$, the terms in $C$ are dominated by the terms in $A$ (except for the upper left term, whose counterpart in $A$ is exponentially small).  Thus so long as $\eqref{eq:usmall2}$ holds, it follows that for $\eps$ sufficiently small $D_{(\tau,c)}F$ is invertible.  Moreover, to leading order in $\eps$, the off-diagonal blocks of $(D_{(\tau,c)}F)^{-1}$ are exponentially small and the diagonal blocks have the structure
$\left( \ba{ll} \eps^{-4} & \eps^{-1} \\ \eps^{-1} & \eps^{2+\eta} \ea \right)$ which is similar to $\eqref{eq:Ainv_small_eps}$.
Similarly, one can compute 
$
D^2 F_{1,\alpha} = \left( \ba{ll} 
\partial_\tau^2 F_{1,\alpha} & \partial^2_{\tau,c} F_{1,\alpha} \\
\partial^2_{\tau,c}F_{1,\alpha} & \partial^2_c F_{1,\alpha} \ea \right)$ and
$D^2 F_{2,\alpha} = \left( \ba{ll} \partial_\tau^2 F_{2,\alpha} & \partial^2_{\tau,c} F_{2,\alpha} \\ \partial^2_{\tau,c}F_{2,\alpha} & \partial^2_c F_{2,\alpha} \ea \right)$
where
$\partial_\tau^2 F_{1,\alpha} = 
\omega_\alpha( \partial_\tau^2 \xi_{1,\alpha}, (u - \hat{u})h_\alpha) + \omega_\alpha(\partial_\tau \xi_{1,\alpha},\xi_{1,\alpha})$, 
$\partial^2_{\tau,c} F_{1,\alpha} = 
\omega_\alpha(\partial^2_{(\tau,c)} \xi_{1,\alpha}, (u - \hat{u})h_\alpha) -\omega_\alpha(\partial_\tau \xi_{1,\alpha}, \xi_{2,\alpha})$,
$\partial^2_{c} F_{1,\alpha} =  
\omega_\alpha(\partial_c^2 \xi_{1,\alpha}, (u - \hat{u})h_\alpha) + \omega_\alpha(\partial_c \xi_{1,\alpha}, \xi_{2,\alpha}) + \omega_\alpha(\xi_{1,\alpha},\partial_c^2 \xi_{2,\alpha})
$
and
$\partial_\tau^2 F_{2,\alpha} = \omega_\alpha(\partial_\tau^2 \xi_{2,\alpha}, (\tilde{u} - \hat{u})h_\alpha) - 
\omega_\alpha( \partial_\tau \xi_{2,\alpha},\xi_{1,\alpha}h_\alpha) 
- \omega_\alpha(\xi_{2,\alpha}, \partial_\tau \xi_{1,\alpha})$,
$\partial^2_{(\tau,c)} F_{2,\alpha} =  
\omega_\alpha( \partial^2_{(\tau,c)} \xi_{2,\alpha} , (\tilde{u} - \hat{u})h_\alpha) - \omega_\alpha(\partial_\tau \xi_{2,\alpha},\xi_{2,\alpha}h_\alpha) - 
\omega_\alpha(\partial_c \xi_{2,\alpha}, \xi_{1,\alpha}h_\alpha) -
\omega_\alpha(\xi_{2,\alpha},\partial_c \xi_{1,\alpha})$
and
$\partial_c^2 F_{2,\alpha} = \omega_\alpha(\partial_c^2 \xi_{2,\alpha}, (\tilde{u} - \hat{u})h_\alpha) + \omega_\alpha(\partial_c \xi_{2,\alpha}, \xi_{2,\alpha})$.

Use $\eqref{eq:power_rules}$, $\eqref{eq:usmall2}$, and $\eqref{eq:OmegaBdd}$ to obtain the leading order expressions
\begin{equation}
D^2 F_{1,\alpha}  \sim \left( \ba{ll} \eps^5 & \eps^2 \\ \eps^2 & \eps^{-1} \ea \right) \qquad 
D^2 F_{2,\alpha} \sim \left( \ba{ll} \eps^2 & \eps^{-1} \\ \eps^{-1} & \eps^{-4} \ea \right).
\label{eq:2nd_derivative_f}
\end{equation}

Note that to control the terms involving $\omega_\alpha(\cdot, (u-\hat{u})h_\alpha)$ we have used the fact that the first term in the symplectic product contains a factor of $\xi_{1,\alpha} = \frac{1}{c_\alpha} JH'(u_{c_\alpha})$.  The presence of $J$ allows us to replace the symplectic inner product with a Euclidean one and then use the regular Euclidean Cauchy-Schwartz inequality rather than $\eqref{eq:OmegaBdd}$.  This is important because $u - \hat{u}$ need not lie in the weighted space.

As in the proof of the implicit function theorem we set up a contraction mapping.  It will be convenient to let $A := D_{(\tau,c)} F(\tau^*,c^*,u,\tilde{u})$, to let $x$ denote the variable $(\tau,c)$, and to fix $u$ and $\tilde{u}$ and to denote $F(x,u,\tilde{u})$ simply as $F(x)$.  Define $G(x) := A^{-1}(Ax - F(x))$ so that fixed points of $G$ correspond to zeros of $F$.  Write
\[ \ba{lll}
(F(x)-F(y) - A(x-y)) & = & \left[\int_0^1 \left(F'(tx + (1-t)y) - F'(x^*)\right)dt\right](x-y) \\ 
& = & \langle \int_0^1 \left( \int_0^1  F''(stx + s(1-t)y) ds \right) (tx + (1-t)y - x^*) dt ,(x-y) \rangle. \ea
\]
We renorm $\R^4$ by $|(\tau_+,\tau_-,c_+,c_-)|_r := \sqrt{\eps^3(\tau_+^2 + \tau_-^2) + \eps^{-3}(c_+^2 + c_-^2)}$.

Restrict attention to the neighborhood of $x^*$ on which $|(\tau,c) - (\tau^*,c^*)|_r < \eps^{1 + 2\eta}$ i.e. on which
\begin{equation}
|\tau - \tau^*| < \eps^{-1 + \eta} \qquad \mbox{ and } \qquad |c - c^*| < \eps^{2 + \eta}
\label{eq:mod_small}
\end{equation}
hold.  Denote the $\tau$ and $c$ components of $x - y$ by $\theta_\tau$ and $\theta_c$ respectively.
Then using $\eqref{eq:2nd_derivative_f}$ and $\eqref{eq:mod_small}$ we have
\[ G(x) - G(y) \sim
\left( \ba{cc} \eps^{-4} & \eps^{-1} \\ \eps^{-1} & \eps^{2 + \eta} \ea \right)
\left(\ba{l} \eps^{4 + \eta} \theta_\tau + \eps^{1 + \eta} \theta_c \\
\eps^{1 + \eta} \theta_\tau + \eps^{-2 + \eta} \theta_c \ea \right) 
 \sim  \left( \ba{c} \eps^\eta \theta_\tau + \eps^{3 + \eta} \theta_c \\ 
\eps^{3 + \eta} \theta_\tau + \eps^\eta \theta_c \ea \right).
\] Note that we have absorbed the off-diagonal terms which are exponentially small.  Thus
\begin{equation} | G(x) - G(y) |_r^2 \sim \eps^{3 + 2 \eta} \theta_\tau^2 + \eps^{-3 + 2 \eta} \theta_c^2 = \eps^{2 \eta} |(\theta_\tau,\theta_c)|_r^2. \label{eq:Glip}
\end{equation}
Moreover $G(0) \sim \left( \ba{cc} \eps^{-4} & \eps^{-1} \\ \eps^{-1} & \eps^{2 + \eta} \ea \right) \left( \ba{c} \eps^{3/2} \| u - \hat{u} \| \\ \eps^{-3/2} \| (\tilde{u} - \hat{u})h_\alpha \|_\alpha \ea \right) \sim \left( \ba{c} \eps^{\eta} \\ \eps^{3 + \eta} \ea \right)$ where in the last relation we have used $\eqref{eq:usmall2}$.  Thus $|G(0)|_r \sim \eps^{3/2 + \eta}$.
It now follows from the triangle inequality and $\eqref{eq:Glip}$ that for $\eps$ small enough $G$ maps a neighborhood in $\R^4$ of the form $\eqref{eq:mod_small}$ to itself and is moreover a contraction on this neighborhood.  In particular $G$ has a unique fixed point $x = (\tau,c)$ in this neighborhood.  Moreover, as is standard in implicit function theorems, we can let $u$ and $\tilde{u}$ vary in the defintion of $G$ and the fixed point $(\tau,c) = (\tilde{\tau}(u,\tilde{u}),\tilde{c}(u,\tilde{u}))$ is smooth in $u$ and $\tilde{u}$ by the uniform contraction principle.

It therefore follows that these functions $\tau_\alpha = \tilde{\tau}_\alpha(u,\tilde{u})$ and $c_\alpha = \tilde{c}_\alpha(u,\tilde{u})$ map a neighborhood of $(\hat{u}(\tau^*,c^*),\hat{u}(\tau^*,c^*))$ to a neighborhood of $(\tau^*,c^*)$ such that $F(\tau,c,u,\tilde{u}) = 0$ if and only if $c_\alpha = \tilde{c}_\alpha(u,\tilde{u})$ and $\tau_\alpha = \tilde{\tau}_\alpha(u,\tilde{u})$.  Moreover, the implicitly defined functions are unique and as smooth as $F$, i.e. $C^\infty$.
Thus upon defining $\tilde{v_2}(u,\tilde{u}) := \tilde{u} - \hat{u}(\tilde{\tau}(u,\tilde{u}),\tilde{c}(u,\tilde{c}))$ we obtain a homeomorphism between a neighborhood $\mathcal{U}$ of $(\hat{u}(\tau^*,c^*),\hat{u}(\tau^*,c^*))$ to a neighborhood $\mathcal{V}$ of $(\tau^*,c^*,0)$.

Recall that $\tau^*$ and $c^*$ are arbitrary, thus for each $\tau$ and $c$  satisfying $(i)$ and $(ii)$ we have a homeomorphism between $\mathcal{U}(\tau,c)$ and $\mathcal{V}(\tau,c)$.  We now show that these local homeomorphisms may be patched together to produce a global homeomorphism betwenn $\mathcal{U}^*$ and $\mathcal{V}^*$.

To that end, suppose that 
$$\hat{u}(\tau,c) + v_2 = \hat{u}(\bar{\tau},\bar{c}) + \bar{v}_2$$
so that 
$$ \hat{u}(\tau,c) - \hat{u}(\bar{\tau},c) + \hat{u}(\bar{\tau},c) - \hat{u}(\bar{\tau},\bar{c}) = \bar{v}_2 - v_2$$
by the mean value theorem there are numbers $\tau_\alpha^\dagger$ between $\tau_\alpha$ and $\bar{\tau}_\alpha$ and $c_\alpha^\dagger$ between $c_\alpha$ and $\bar{c}_\alpha$ such that 
$$ \xi_1(\tau_+^\dagger,c_+)(\tau_+ - \bar{\tau}_+) + \xi_1(\tau_-^\dagger,c_-)(\tau_- - \bar{\tau}_-) + \xi_2(\bar{\tau}_+,c_+^\dagger)(c_+ -\bar{c}_+) + 
\xi_2(\bar{\tau}_-,c_-^\dagger)(c_- - \bar{c}_-) = v_2 - \bar{v}_2$$
In light of $\eqref{eq:power_rules}$ we see that so long as we force $\| v_2 h_\alpha\|_\alpha + \|\bar{v}_2 h_\alpha\|_\alpha < Ce^{3/2 + 2\eta}$ it follows that $|\tau - \bar{\tau}| < C\eps^{-1 + 2\eta}$ and $|c - \bar{c}| < C\eps^{2 + 2 \eta}$.  In particular, $|\tau - \bar{\tau}|$ and $|c - \bar{c}|$ are sufficiently small for the local inverses to hold.
\end{proof}

\bibliography{FPU_reference}

\end{document}